\theoremstyle{plain}
\newtheorem{thm}{Theorem}[section]
\newtheorem{cor}[thm]{Corollary}
\newtheorem{lem}[thm]{Lemma}
\newtheorem{cla}[thm]{Claim}
\newtheorem{prop}[thm]{Proposition}
\newtheorem{assum}[thm]{Assumption}
\theoremstyle{definition}
\newtheorem{defn}[thm]{Definition}
\newtheorem{rem}[thm]{Remark}
\newtheorem{exa}[thm]{Example}
\newtheorem{step}{Step}
\newcommand{\Ker}{\mathop{\mathrm{Ker}}\nolimits}
\newcommand{\Image}{\mathop{\mathrm{Im}}\nolimits}
\newcommand{\Coker}{\mathop{\mathrm{Coker}}\nolimits} 
\newcommand{\id}{\ensuremath{\mathop{\mathrm{id}}}}
\newcommand{\Sym}{\mathrm{Sym}}
\newcommand{\Pic}{\mathop{\mathrm{Pic}}\nolimits}
\newcommand{\Ext}{\mathop{\mathrm{Ext}}\nolimits}
\newcommand{\Hom}{\mathop{\mathrm{Hom}}\nolimits}
\newcommand{\RHom}{\mathop{\bR\mathrm{Hom}}\nolimits}
\newcommand{\scRHom}{\mathop{\bR\mathcal{H}om}\nolimits}
\newcommand{\RGamma}{\mathop{\bR\Gamma}\nolimits}
\newcommand{\Lotimes}{\stackrel{\bL}{\otimes}}
\newcommand{\End}{\mathop{\mathrm{End}}\nolimits}
\newcommand{\Spec}{\operatorname{Spec}}
\newcommand{\GL}{GL}
\newcommand{\scEnd}{\mathop{\mathcal{E}nd}\nolimits}
\newcommand{\Coh}{\operatorname{Coh}}
\newcommand{\module}{\operatorname{mod}}
\newcommand{\Per}[1]{\!{\ }^{#1} \! \mathop{\mathrm{Per}}}
\newcommand{\bC}{\ensuremath{\mathbb{C}}}
\newcommand{\bD}{\ensuremath{\mathbb{D}}}
\newcommand{\bL}{\ensuremath{\mathbb{L}}}
\newcommand{\bP}{\ensuremath{\mathbb{P}}}
\newcommand{\bR}{\ensuremath{\mathbb{R}}}
\newcommand{\bZ}{\ensuremath{\mathbb{Z}}}
\newcommand{\scA}{\ensuremath{\mathcal{A}}}
\newcommand{\scC}{\ensuremath{\mathcal{C}}}
\newcommand{\scD}{\ensuremath{\mathcal{D}}}
\newcommand{\scE}{\ensuremath{\mathcal{E}}}
\newcommand{\scF}{\ensuremath{\mathcal{F}}}
\newcommand{\scH}{\ensuremath{\mathcal{H}}}
\newcommand{\scK}{\ensuremath{\mathcal{K}}}
\newcommand{\scL}{\ensuremath{\mathcal{L}}}
\newcommand{\scM}{\ensuremath{\mathcal{M}}}
\newcommand{\scN}{\ensuremath{\mathcal{N}}}
\newcommand{\scO}{\ensuremath{\mathcal{O}}}
\newcommand{\scQ}{\ensuremath{\mathcal{Q}}}
\newcommand{\scU}{\ensuremath{\mathcal{U}}}
\newcommand{\NI}{\noindent}
  \newcommand{\Span}[1]{\left<#1\right>}
\title{Tilting generators via ample line bundles}
\author{Yukinobu Toda and Hokuto Uehara}
\date{}
\begin{document}

\maketitle

\begin{abstract}
It is known that 
a tilting generator on an algebraic variety $X$ gives a derived equivalence 
between $X$ and a certain non-commutative algebra. 
In this paper, we present a method to construct 
a tilting generator from an ample line bundle, 
and construct it in several examples. 
\end{abstract}

\section{Introduction}
Let $D^b(X)$ be the bounded derived category 
of coherent sheaves on an algebraic variety $X$. 
Modern algebraic geometers have
often observed that $D^b(X)$ appears in 
a symmetry connecting two mathematical objects.
For example,
Beilinson~\cite{Bei} finds an example of such phenomena:
he discovers that the derived category 
$D^b(\mathbb{P}^n)$ on the projective space $\bP^n$ 
is equivalent to the derived category 
$D^b(\module \End_{\mathbb{P}^n}(\scE))$
of the abelian category of finitely generated 
right $\End_{\mathbb{P}^n}(\scE)$-modules, where $\scE$ is
the vector bundle
$$
\mathcal{O}_{\mathbb{P}^n}\oplus 
\mathcal{O}_{\mathbb{P}^n}(-1) \oplus \cdots  
\oplus \mathcal{O}_{\mathbb{P}^n}(-n).
$$
We also have the so-called \textit{McKay correspondence} (\cite{BKR}, 
\cite{KaVa}), 
which is a symmetry between complex algebraic geometry and 
representation theory. We now understand the McKay correspondence
 as a derived equivalence
 between an algebraic variety and a non-commutative algebra. 
 
 Van den Bergh proposes a
 generalization of Beilinson's theorem and the McKay correspondence 
through derived Morita theory \cite{Rickard}. 
 \begin{thm}\emph{\bf{\cite[Theorem A]{MVB}}}\label{Intro:1}
 Let $f\colon X\to Y=\Spec R$ be a projective morphism 
 between Noetherian schemes. Assume that $f$ has 
 at most one-dimensional fibers 
 and $\mathbb{R}f_{\ast}\mathcal{O}_X =\mathcal{O}_Y$. 
 Then there is a vector bundle $\mathcal{E}$ on $X$
 such that the functor 
 $$\mathbb{R}\Hom_X(\mathcal{E}, -)\colon 
 D^b(X) \to D^b(\module \End_X(\mathcal{E})),$$
 defines an equivalence of derived categories. 
 \end{thm}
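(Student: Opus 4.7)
The plan is to construct $\scE$ as a projective generator in the heart of a carefully chosen t-structure on $D^b(X)$, namely the category of perverse coherent sheaves relative to $f$. Since $f$ has at most one-dimensional fibers, there are two natural such t-structures, indexed by $p \in \{-1, 0\}$, with hearts $\Per{p}(X/Y)$ whose objects have cohomology only in degrees $\{-1, 0\}$, satisfy appropriate vanishing of $f_\ast$ and $R^1 f_\ast$ on their cohomology sheaves, and satisfy a Hom-vanishing condition against sheaves annihilated by $\mathbb{R}f_\ast$. The hypothesis $\mathbb{R}f_\ast \scO_X = \scO_Y$ ensures that $\scO_X$ lies in $\Per{p}(X/Y)$ and will be crucial for producing enough projectives.

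First I would reduce to a local problem on $Y$: by flat base change one may pass to the completion of $R$ at each closed point $y \in Y$, and in that setting $\Per{p}(X/Y)$ becomes a finite-length abelian category whose simple objects correspond to the irreducible components $C_1, \ldots, C_n$ of the fiber $f^{-1}(y)$ together with $\scO_y$. Second, I would construct projective covers of these simples: using an ample line bundle on $X$ and its twists (or syzygies formed from sufficiently many generating sections), produce line bundles $\scL_i$ on $X$ with prescribed intersection behavior against the curves $C_j$, and assemble them into $\scE = \scO_X \oplus \bigoplus_i \scL_i$ (modified into a higher rank bundle where necessary) as the candidate projective generator. Projectivity in the heart combined with torsion-freeness on the fibers will force $\scE$ to be locally free.

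Third, I would verify the tilting property: projectivity of $\scE$ in $\Per{p}(X/Y)$ yields vanishing of $\Ext^i$ computed in the heart for $i > 0$, which combined with $\mathbb{R}f_\ast \scO_X = \scO_Y$ and the affineness of $Y$ translates into $\Ext_X^i(\scE, \scE) = 0$ for $i > 0$; generation of $D^b(X)$ by $\scE$ follows from generation of the heart together with the fact that $\Per{p}(X/Y)$ generates $D^b(X)$ as a triangulated category. Rickard's derived Morita theorem then produces the desired equivalence $\mathbb{R}\Hom_X(\scE, -) \colon D^b(X) \isom D^b(\module \End_X(\scE))$. The principal obstacle is the construction of a global projective generator that is genuinely a vector bundle: while local projective generators exist by standard arguments in a finite-length abelian category, gluing them into a global locally free sheaf on $X$ requires the vanishing of a Brauer-type obstruction, and this vanishing is exactly what the hypotheses $\mathbb{R}f_\ast \scO_X = \scO_Y$ and one-dimensionality of fibers are designed to guarantee.
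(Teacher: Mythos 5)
Your overall framework --- the perverse t-structure on $D^b(X)$ for a morphism with one-dimensional fibers, a projective generator of its heart, and Rickard's theorem --- is in fact the strategy of Van den Bergh's original proof, and it differs from the paper's own two-line argument only in packaging. The genuine gap sits at the central constructive step: how the projective generator is actually produced as a vector bundle. Your primary proposal, line bundles $\scL_i$ ``with prescribed intersection behavior against the curves $C_j$,'' fails in general: the degree map $\Pic(X)\to\bigoplus_j \bZ$ on the components of a fiber need not be surjective, and even when the $C_j$ can be separated by line bundles, a direct sum of line bundles is typically not projective in the heart, because projectivity forces certain $\Ext^1$'s into the bundle as nontrivial extensions. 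Your fallback --- build local projective generators over the completions of $R$ and glue, with the hypotheses ``designed to guarantee'' the vanishing of a Brauer-type obstruction --- is asserted rather than proved; nothing in $\bR f_*\scO_X=\scO_Y$ or the fiber-dimension bound obviously kills such a descent obstruction, and this gluing problem is precisely what the actual construction is engineered to avoid by never localizing in the first place.

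The paper's proof (\S 4.2, and \S 4.3 run once with $k=1$) is a single global construction: choose a globally generated ample line bundle $\scL$ on $X$, pick a finite set of generators of the $R$-module $H^1(X,\scL^{-1})\cong\Ext^1_X(\scO_X,\scL^{-1})$, and let $\scN$ be the corresponding extension
\begin{equation*}
0\to\scL^{-1}\to\scN\to\scO_X^{\oplus r}\to 0,
\end{equation*}
with $\scE=\scO_X\oplus\scN$. Tilting is a direct computation using $\bR^if_*\scL^{j}=0$ for $i>0$, $j\ge 0$ (Lemma \ref{lem:vanishing}; for curve fibers only $i=1$ matters) together with the surjectivity, built into the choice of generators, of the connecting maps; generation follows from Lemma \ref{lem322} because $\scO_X$ and $\scN$ generate the same triangulated subcategory as $\scO_X$ and $\scL^{-1}$. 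If you wish to keep the perverse-sheaf formulation, the missing ingredient you must supply is exactly this extension: it is what makes $\scN$ projective in the perverse heart, globally and with no descent argument, and it is the template the paper then iterates in \S 4.3--4.4 for higher-dimensional fibers.
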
 
\NI
Such a vector bundle $\mathcal{E}$ is called
 a \textit{tilting generator}.
In the proof, Van den Bergh uses a globally generated 
ample line bundle $\scL$ on $X$
and constructs $\scE$ from $\scO_X$ and $\scL^{-1}$. 

 Recently, Kaledin~\cite{Kale} proved
 the existence of a tilting generator \'etale locally on $Y$
  when $f\colon X\to Y$ is a crepant resolution and $Y$ has 
 symplectic singularities. 
He uses quite sophisticated tools 
 such as mod $p$ reductions and deformation quantizations, but
 it seems difficult to apply his method 
when $Y$ does not have symplectic singularities.

 The aim of this paper is to generalize Van den Bergh's arguments
 using ample line bundles, and to
 construct a tilting generator in a more general setting.
 In particular, we relax the 
fiber dimensionality assumption.
 One of our main results is:
 \begin{thm}\emph{\bf{[Theorem~\ref{thm:rel.dim2}]}}
 \label{thm:main0}
 Let $f\colon X\to Y=\Spec R$ be a projective morphism between Noetherian 
 schemes and $R$ be a ring of finite type over a field, or a Noetherian complete local ring. Assume that $f$ has at most two-dimensional 
 fibers and $\bR f_{\ast}\scO_X =\scO_Y$. 
 Further assume that 
 there is an ample globally generated 
 line bundle $\mathcal{L}$ on $X$ that satisfies 
 $\mathbb{R}^2 f_{\ast}\mathcal{L}^{-1}=0$. 
 Then there is a tilting vector bundle generating the derived category $D^-(X)$. 
 \end{thm}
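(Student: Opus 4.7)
The strategy is to extend Van den Bergh's construction from one- to two-dimensional fibers. In the one-dimensional case the tilting bundle is built from $\scO_X$ together with a universal extension involving $\scL^{-1}$; for two-dimensional fibers one expects three building blocks of the form $\scO_X$, $\scL^{-1}$, $\scL^{-2}$, as is the case for Beilinson's bundle on $\bP^2$. I would start with
$$\scG_0 = \scO_X \oplus \scL^{-1} \oplus \scL^{-2},$$
and then construct $\scE$ by a finite sequence of universal extensions that annihilate all positive $\Ext$ groups while preserving local freeness.

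First I would verify that $\scG_0$ classically generates $D^-(X)$. The ample globally generated $\scL$ yields a surjection $\scO_X^{\oplus N} \twoheadrightarrow \scL$ and, taking its Koszul-type complex, one can express $\scL^{-n}$ for $n \geq 3$ as an iterated extension of the $\scL^{-a}$ with $a \in \{0,1,2\}$, using the at-most-two-dimensional fiber hypothesis to bound the useful length of the resolution; combined with relative Serre vanishing this gives generation of any object in $D^-(X)$. Next I would analyze
$$\Ext^i_X(\scL^{-a},\scL^{-b}) \;=\; H^i\bigl(X,\scL^{a-b}\bigr),\qquad 0\leq a,b\leq 2,\;\; i>0.$$
For $a\geq b$ these can be killed in positive degree by replacing $\scL$ with $\scL^m$, $m\gg 0$, using relative Serre vanishing together with $\bR f_{\ast}\scO_X=\scO_Y$. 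The essential cases are $a<b$, where one meets $\bR^{>0}f_{\ast}\scL^{-1}$ and $\bR^{>0}f_{\ast}\scL^{-2}$: the hypothesis $\bR^2 f_{\ast}\scL^{-1}=0$ controls the top degree for $\scL^{-1}$, and $\bR^{>0}f_{\ast}\scL^{-2}$ can be reduced to the previous cases by applying $\bR f_{\ast}$ to the Koszul sequence arising from $0\to \scK \to V\otimes\scO_X \to \scL\to 0$ and its exterior powers.

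With the surviving $\Ext^1$ and $\Ext^2$ modules now finitely generated over $R$, I would kill them iteratively by universal extensions: if $M=\Ext^j(\scF,\scF')$, form
$$0 \longrightarrow M\otimes_R \scF' \longrightarrow \scF'' \longrightarrow \scF \longrightarrow 0,$$
which annihilates the chosen class, and continue. The hypothesis on $R$ (finite type over a field, or Noetherian complete local) is what guarantees finite generation of the Ext modules and legitimises this functorial construction. The hard part, and the main obstacle, is the bookkeeping: each universal extension may create new positive Ext classes elsewhere, so the extensions must be ordered carefully and shown to terminate after finitely many steps in a locally free object, not merely a perfect complex. The combined input of the two-dimensional fiber bound and $\bR^2 f_{\ast}\scL^{-1}=0$ caps the cohomological degrees that can appear, which is what ultimately makes the procedure finite and the resulting $\scE$ a tilting vector bundle generating $D^-(X)$.
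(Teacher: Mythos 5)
Your proposal breaks down at the step ``kill them iteratively by universal extensions.'' A universal extension $0\to M\otimes_R\scF'\to\scF''\to\scF\to 0$ is a class in $\Ext^1_X(\scF, M\otimes_R\scF')$ and can only annihilate $\Ext^1$ groups; it cannot touch an $\Ext^2$ class. The whole difficulty of the two-dimensional case is precisely that $\Ext^2_X(\scE_1,\scL^{-2})$ (equivalently, contributions from $\bR^2 f_{\ast}\scL^{-2}$) need not vanish --- the hypothesis only gives $\bR^2 f_{\ast}\scL^{-1}=0$, and your claim that $\bR^{>0}f_{\ast}\scL^{-2}$ ``can be reduced to the previous cases'' via a Koszul sequence is not correct: the paper explicitly isolates the situation where $\bR^2f_{\ast}\scL^{-2}=0$ as the easy case (Remark \ref{rem:easycase}) in which the naive inductive extension procedure does work, and the content of the theorem is the general case. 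Relatedly, your explanation of the hypothesis on $R$ is off: finite generation of the $\Ext$ modules already follows from properness of $f$; the hypothesis is there to guarantee the existence of a dualizing complex.

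What the paper actually does is the following. After building $\scE_1=\scO_X\oplus\scN_0$ by a genuine universal extension (where only $\Ext^1$ appears, thanks to $\bR^2f_\ast\scL^{-1}=0$), it forms $A_1=\End_X(\scE_1)$, takes a free $A_1$-resolution $P_1$ of $\RHom_X(\scE_1,\scL^{-2})$ --- which now has cohomology in degree $2$ as well --- and defines $\scN_1$ as the cone of $\Psi_1(\sigma_{\ge1}P_1)\to\scL^{-2}$. This $\scN_1$ is a priori only a perfect complex. To show it is a tilting vector bundle one needs a glued perverse t-structure on a category $D^{\dag}(X)$ (requiring a right adjoint to $\Phi_1$, hence the dualizing complex), in which $\scN_1$ is shown to be a projective object; local freeness then follows from $\Hom^i_X(\scN_1,\scO_x)=0$ for $i\neq0$. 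All of this hinges on verifying Assumption \ref{assum:induction} --- that $\RHom_X(\scO_X\oplus\scL^{-1},\scK)=0$ forces the same vanishing on each cohomology sheaf $\scH^k(\scK)$ --- and the proof of Theorem \ref{thm:rel.dim2} is exactly this verification, via a spectral-sequence and Grothendieck-duality argument on general hyperplane sections of the two-dimensional fibers. None of this machinery is visible in your outline, so the proposal as written does not reach the theorem.
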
 

 Our method can apply to more general situations: for instance,
 we can show that there is a tilting generator 
 on $X=T^*G(2,4)$, where $G(2,4)$ is the Grassmann manifold. 
 The variety $X$ admits
  the Springer resolution $f\colon X\to \Spec R$, which 
 has a 4-dimensional fiber.

 The paper is organized as follows.
 In \S \ref{section:ample},
 we show some easy results on ample line bundles, which we use later.
 In \S \ref{section:tilting}, we define tilting generators and
 explain their properties.
 In \S \ref{section: construction}, we present our main construction of 
 tilting generators and the assumptions behind it.
 In \S \ref{section:heart}, we study the heart of a t-structure given in 
 \S \ref{section: construction}. The results in \S \ref{section:heart} are 
 not used in any other sections. 
 In \S \ref{section:2dimensional}, we prove Theorem \ref{thm:main0}
 and find several examples where we can apply Theorem \ref{thm:main0}.
 In \S \ref{section:G(2,4)}, we find a tilting generator 
 of the derived category of the cotangent bundle of 
 the Grassmann manifold $G(2,4)$. 
 In \S \ref{section:auxiliary}, we show an auxiliary result which is
 needed in \S \ref{subsection:gluing}. To prove the result 
 in \S \ref{section:auxiliary},
we require the dualizing complex $D_R$ on $Y$ 
in Theorem \ref{thm:main0}. This requirement is why 
we assume that $Y$ is a scheme of finite type over a field or 
a spectrum of a Noetherian complete local ring.
In the appendix, we apply our result to prove the existence of 
non-commutative crepant resolutions in the sense of 
 Van den Bergh (\cite{nonc}).


\paragraph{Notation and Conventions.}
For a right (respectively, left) Noetherian (possibly non-commutative) 
ring $A$,
$\module A$ (respectively, $A\module$) is the abelian 
category of finitely generated right (respectively, left) 
$A$-modules 
and we set $D^b(A)=D^b(\module A)$,
$D^-(A)=D^-(\module A)$ etc. 
We denote by $A^\circ$ the opposite ring of a ring $A$.

For a Noetherian scheme $X$,
we denote by $D(X)$ (respectively, $D^b(X)$, $D^-(X)$, $\ldots$.) 
the unbounded (respectively, bounded, bounded above, $\ldots$.)
derived category of coherent sheaves.
If $\scA$ is a sheaf of $\scO_X$-algebras,
then we denote by $\Coh \scA$ the category of right coherent 
$\scA$-modules. Put $D^-(\scA)=D^-(\Coh \scA)$.

We also denote by $D_X$ the dualizing complex (if it exists) and
by $\bD_X$ the dualizing functor 
$$
\scRHom _X(-,D_X)\colon D^-(X)\to D^+(X).
$$

For a complex $\scK$ of coherent sheaves on $X$,
we denote by $\tau_{\le p}\scK(=\tau_{<p+1}\scK)$ and 
$\tau_{> p}\scK(=\tau_{\ge p+1}\scK)$ 
the following complexes:
$$
 (\tau_{\le p}\scK)^n= \begin{cases}
                        \scK^n & n < p \\
            \Ker d^p & n = p \\
                     0 & n > p  
\end{cases}
$$
$$
 (\tau_{> p}\scK)^n= \begin{cases}
                    0 & n < p      \\
              \Image d^p & n = p \\
                     
                     \scK^n & n > p .
\end{cases}
$$
Here, $d^{p}\colon \mathcal{K}^{p}\to \mathcal{K}^{p+1}$ is 
the differential.
Similarly we denote by $\sigma_{\le p}\scK(=\sigma_{< p+1}\scK)$ 
and $\sigma_{> p}\scK(=\sigma_{\ge p+1}\scK)$ 
the following complexes:
 $$
 (\sigma_{\le p}\scK)^n= \begin{cases}
                        \scK^n & n \le p \\
                        0 & n>p
\end{cases}
$$
and
$$
 (\sigma_{> p}\scK)^n= \begin{cases}
                         0 & n\le p \\
                          \scK^n & n > p.
\end{cases}
$$
Then there are distinguished triangles in $D(X)$:
$$
\tau_{\le p}\scK \to \scK \to \tau_{>p}\scK\to \tau_{\le p}\scK[1]
$$
and 
$$
\sigma_{> p}\scK \to \scK \to \sigma_{\le p}\scK\to \sigma_{> p}\scK[1].
$$

We denote by $D(X)^{\le p}$ the full subcategory of $D(X)$:
$$
D(X)^{\le p} = \bigl\{ 
\scK \in D(X) \bigm|
\scH^i(\scK)=0 \mbox{ for all } i>p  \bigr\}.
$$
We also define $D(A)^{\ge p}\ldots$ similarly.
  

\paragraph{Acknowledgement.}
Y.T. is supported by J.S.P.S for Young Scientists (No.198007).
H.U. is supported by the Grants-in-Aid 
for Scientific Research (No.17740012). 
H.U. thanks Hiraku Nakajima for useful discussions.


\section{Results on ample line bundles}\label{section:ample}
In this section, we present some easy results on ample line bundles.
Let $f \colon X\to Y=\Spec R$ be a projective morphism 
from a Noetherian scheme to a Noetherian affine scheme.
Suppose that $\mathbb{R}^i f_{\ast}\mathcal{O}_X=0$
for $i>0$ and the fibers of $f$ are
at most $n$-dimensional ($n\ge 0$). 
Assume further that there is an ample, 
globally generated line bundle $\scL $ on $X$, satisfying
\begin{equation}\label{eqn:ample4}
\bR^if_*\scL^{-j}=0
\end{equation}
for $i\ge 2, 0<j<n$.

Take general elements $H_k\in |\scL|$, $1\le k\le n$, and
put $H^k=H_1\cap\cdots\cap H_k, H^0=X$ and $H=H^1$.
Below we often use the following exact sequence:
\begin{align}\label{eqn:Lj}
0\to\scL^{l-1}|_{H^k}\to\scL^{l}|_{H^k}\to\scL^{l}|_{H^{k+1}}\to 0.
\end{align}

\begin{lem}\label{lem:vanishing}
In the above situation,
we have
$$\bR^if_*\scL^j=0$$
for all $i>0, j\ge 0$.
\end{lem}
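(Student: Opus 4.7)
The plan is to proceed by induction on $n$, the maximum fiber dimension. The base case $n=0$ is immediate: a projective morphism with zero-dimensional fibers is finite, so $\bR^i f_* \scL^j = 0$ for all $i>0$ automatically.

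For the inductive step, assuming the lemma for fiber dimension $n-1$, I would reduce the vanishing of $\bR^i f_* \scL^j$ to the analogous vanishing for $f|_H \colon H \to Y$, where $H=H^1$ is a general member of $|\scL|$. Since $f|_H$ is projective with fibers of dimension at most $n-1$ and $\scL|_H$ remains ample and globally generated, it suffices to check the two remaining hypotheses for $f|_H$ (with $n$ replaced by $n-1$). For the vanishing $\bR^i (f|_H)_* \scO_H = 0$ with $i>0$, I would apply $\bR f_*$ to $0 \to \scL^{-1} \to \scO_X \to \scO_H \to 0$; using $\bR^i f_* \scO_X = 0$ for $i>0$ together with $\bR^i f_* \scL^{-1}=0$ for $i \ge 2$ (from \eqref{eqn:ample4} when $n \ge 2$, and from Grothendieck vanishing when $n=1$) yields the desired conclusion. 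For the vanishing $\bR^i (f|_H)_* \scL^{-j}|_H = 0$ when $i \ge 2$ and $0 < j < n-1$, I would apply $\bR f_*$ to \eqref{eqn:Lj} with $k=0$ and $l=-j$; both flanking terms $\bR^i f_* \scL^{-j}$ and $\bR^{i+1} f_* \scL^{-j-1}$ vanish by \eqref{eqn:ample4}.

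Granting these verifications, the inductive assumption applied to $f|_H$ gives $\bR^i (f|_H)_* \scL^l|_H = 0$ for all $i>0$ and $l \ge 0$. I would then conclude $\bR^i f_* \scL^j = 0$ for $i>0$ by ascending induction on $j$: the base case $j=0$ is given, and for $j \ge 1$ the long exact sequence attached to \eqref{eqn:Lj} (with $k=0$, $l=j$) contains the segment
$$\bR^i f_* \scL^{j-1} \to \bR^i f_* \scL^j \to \bR^i (f|_H)_* \scL^j|_H,$$
whose left term vanishes by the induction on $j$ and whose right term vanishes by the previous paragraph, forcing the middle to vanish as well. The only mildly delicate point is the bookkeeping for the small cases $n=1,2$, where portions of \eqref{eqn:ample4} become vacuous and must be supplied instead by Grothendieck vanishing; beyond that, no step poses any genuine obstacle.
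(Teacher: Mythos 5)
Your proposal is correct and takes essentially the same route as the paper's own proof: induction on the fiber dimension $n$, verification that $f|_H$ inherits the hypotheses with $n$ replaced by $n-1$ (via the sequences $0\to\scL^{-1}\to\scO_X\to\scO_H\to 0$ and (\ref{eqn:Lj})), followed by an ascending induction on $j$ using the surjections $\bR^if_*\scL^{j-1}\twoheadrightarrow\bR^if_*\scL^{j}$. Your explicit attention to the small cases $n=1,2$, where parts of (\ref{eqn:ample4}) are vacuous and Grothendieck vanishing must supply the missing input, is a detail the paper leaves implicit but changes nothing of substance.
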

\begin{proof}
We show the assertion by induction on $n$, the upper bound 
of the dimension of the fibers of $f$.
 The statement obviously holds when $f$ is quasi-finite, 
that is, $n=0$. Next, suppose that $n>0$ and the 
statement holds 
for $n-1$.
 
By (\ref{eqn:ample4}) and (\ref{eqn:Lj}),
we see $\bR^1 f_{\ast}(\mathcal{O}_H)=0$ and
$$
\bR^if_*(\scL^{-j}|_H)=0 
$$
for $i\ge 2, 0\le j<n-1$. 
Hence we can use the induction hypothesis, and conclude
$$
\bR^if_*(\scL^j|_H)=0
$$
for all $i>0, j\ge 0$. Therefore,
there is a surjection
$
\bR^if_*\scL^{j-1}\twoheadrightarrow \bR^if_*\scL^j.
$
Since $\bR^if_*\scO_X= 0$ for $i>0$, 
we obtain the assertion. 

\end{proof}
\noindent
In the application below,
$X$ is always a smooth variety and $-K_X$ is $f$-nef and $f$-big. 
If, furthermore, $X$ is defined over $\bC$, then Lemma 
\ref{lem:vanishing} is 
automatically true by the vanishing theorem.
(cf.~\cite[Theorem~1-2-5]{KMM}.)
Next we see the following:

\begin{lem}\label{lem:properties}
In the above situation, we have
$$
\RHom_X (\scL^{-n},C)\in R\module
$$
for $C\in\Coh X$ with 
$\RHom_X (\bigoplus_{i=0}^{n-1}\scL^{-i},C)=0$,
and
$$
\RHom_X (\scL^{n},C)\in R\module [-n]
$$
for $C\in\Coh X$ with
$\RHom_X (\bigoplus_{i=0}^{n-1}\scL^{i},C)=0$.
\end{lem}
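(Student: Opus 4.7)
The plan is to prove both parts by induction on the fiber-dimension bound $n$, using a general hyperplane section $H=H_1\in|\scL|$ to reduce to the analogous statement on $(H,\scL|_H)$ with parameter $n-1$. By adjunction the two statements are equivalent to
$$
\bR f_*(C\otimes\scL^n)\in R\module \quad\text{and}\quad \bR f_*(C\otimes\scL^{-n})\in R\module[-n],
$$
under the respective hypotheses $\bR f_*(C\otimes\scL^i)=0$ and $\bR f_*(C\otimes\scL^{-i})=0$ for $i=0,\dots,n-1$. The base case $n=0$ is immediate: $f$ is then finite, $\bR f_*C=f_*C\in R\module$, and the hypothesis is vacuous.

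For the inductive step I pick $H$ general enough that each exact sequence (\ref{eqn:Lj}) remains exact after tensoring with any of the coherent sheaves $C\otimes\scL^l|_{H^k}$ appearing below, which is possible because $|\scL|$ is base-point free and each such sheaf has only finitely many associated points. I would then check that $(H,\scL|_H)$ inherits the standing hypotheses of the section with $n-1$ in place of $n$: the fiber dimension of $f|_H$ is at most $n-1$ because $\scL$ is ample and globally generated on every fiber of $f$, while $\bR^i f_*\scO_H=0$ for $i>0$ and $\bR^i f_*(\scL|_H^{-j})=0$ for $i\ge 2,\ 0<j<n-1$ follow from pushing forward $0\to\scL^{-1}\to\scO_X\to\scO_H\to 0$ and $0\to\scL^{-(j+1)}\to\scL^{-j}\to\scL|_H^{-j}\to 0$ together with (\ref{eqn:ample4}) and Lemma \ref{lem:vanishing}.

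For part (i), pushing forward $0\to C\otimes\scL^{l-1}\to C\otimes\scL^l\to (C\otimes\scL^l)|_H\to 0$ for $l=1,\dots,n-1$ and using the hypothesis yields $\bR f_*((C\otimes\scL^l)|_H)=0$ in that range. Setting $C':=C|_H\otimes\scL|_H$, this is exactly the hypothesis of (i) for $(H,\scL|_H)$ with parameter $n-1$ applied to $C'$, so by induction $\bR f_*((C\otimes\scL^n)|_H)=\bR f_*(C'\otimes\scL|_H^{n-1})\in R\module$. A final push of $0\to C\otimes\scL^{n-1}\to C\otimes\scL^n\to (C\otimes\scL^n)|_H\to 0$, whose left term pushes to zero, gives the claim. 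Part (ii) is symmetric: the hypothesis on $C$ transfers directly to $C|_H$, induction yields $\bR f_*(C|_H\otimes\scL|_H^{-(n-1)})\in R\module[-(n-1)]$, and the distinguished triangle from $0\to C\otimes\scL^{-n}\to C\otimes\scL^{-(n-1)}\to (C\otimes\scL^{-(n-1)})|_H\to 0$, whose middle term vanishes by hypothesis, shifts this by one to land in degree $n$.

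The main obstacle is purely bookkeeping: choosing $H$ generic enough that all the short exact sequences stay exact when tensored with the relevant sheaves, and verifying that the standing hypotheses of the section really do restrict to $(H,\scL|_H)$. Once these points are settled, both parts follow from the diagram chase above.
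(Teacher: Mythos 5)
Your argument is correct and is essentially the paper's own proof: the authors also cut by general members of $|\scL|$ and use the restriction sequences $0\to C\otimes\scL^{l-1}\to C\otimes\scL^{l}\to (C\otimes\scL^{l})|_{H}\to 0$ to propagate the vanishing hypothesis and identify $\RGamma(X,C\otimes\scL^{\pm n})$ with $\RGamma(H^{n},\cdot)$ (up to a shift by $[-n]$ in the second case), where $H^{n}$ is relative $0$-dimensional. The only difference is organizational -- you phrase it as induction on $n$ applied to $(H,\scL|_{H})$, which forces you to re-verify the standing hypotheses of the section on $H$, whereas the paper simply iterates through $H^{1}\supset\cdots\supset H^{n}$ inside $X$ and never needs those hypotheses in this lemma.
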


\begin{proof}
Take  $C\in\Coh X$ such that 
$$\RGamma (X,\bigoplus_{i=0}^{n-1}\scL^{i}\otimes C)
\cong \RHom_X (\bigoplus_{i=0}^{n-1}\scL^{-i},C)=0.
$$  
Then we can show from (\ref{eqn:Lj}) that
$
\RGamma (H^k,\bigoplus_{i=k}^{n-1}\scL^{i}\otimes C|_{H^k})=0
$
for $k=0,\ldots,n-1$ inductively. Therefore we have  
$$
\RGamma (X,\scL^{n}\otimes C)\cong \RGamma (H,\scL ^{n}\otimes C|_H)
\cong \cdots \cong \RGamma ({H^n},\scL ^{n}\otimes C|_{H^n}).
$$
Because $H^n$ is relative $0$-dimensional, we obtain
$
\RHom _X(\scL^{-n},C)\in R \module
$ as required.

Take  $C\in\Coh X$ such that 
\begin{align*}
\RGamma (X,\bigoplus_{i=-n+1}^{0}\scL^{i}\otimes C)
&\cong \RHom_X(\bigoplus_{i=0}^{n-1}\scL^{i},C) =0.
\end{align*}
Then we can show from (\ref{eqn:Lj}) that
$
\RGamma (H^k,\bigoplus_{i=-n+k+1}^{0}\scL^{i}\otimes C|_{H^k})=0
$
for $k=0,\ldots,n-1$ inductively.   
Therefore we have 
$$
\RGamma (X,\scL^{-n}\otimes C)\cong \RGamma (H,\scL ^{-n+1}\otimes C|_H)[-1]
\cong \cdots \cong \RGamma ({H^n}, C|_{H^n})[-n].
$$
Because $H^n$ is $0$-dimensional, we obtain $\RHom _X(\scL^{n},C)
\in R\module[-n]$
\end{proof}

The following lemma is fundamental in this paper.
\begin{lem}\emph{\bf{\cite[Lemma 3.2.2]{MVB}}}\label{lem322}
Let $f\colon X\to Y$ be a projective morphism between Noetherian schemes
with at most $n$-dimensional fibers. Assume that $Y$ is affine. Let 
$\scL$ be a globally generated ample line bundle on $X$.
Then $\bigoplus_{i=0}^{n} \scL^{i}$ is a generator of $D^-(X)$
(see the definition of generators in Definition \ref{def:tilting}.)
\end{lem}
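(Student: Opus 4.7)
The plan is to induct on $n$, the upper bound on the fiber dimension of $f$. Unwinding the definition, it suffices to prove that any $K\in D^{-}(X)$ with $\RHom_{X}(\scL^{i},K)=\RGamma(X,\scL^{-i}\otimes K)=0$ for $i=0,\ldots ,n$ vanishes. The base case $n=0$ is immediate: $f$ is projective with zero-dimensional fibers, hence finite, so $\RGamma(X,-)=\Gamma(Y,f_{\ast}-)$ is exact and faithful on $\Coh X$, and the vanishing of $\RGamma(X,K)$ already forces each cohomology sheaf $H^{p}(K)$ to be zero.

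For the inductive step, a Bertini-type argument, available thanks to the global generation of $\scL$, produces a section $s\in H^{0}(X,\scL)$ whose zero locus $H=Z(s)$ satisfies that $f|_{H}\colon H\to Y$ has fibers of dimension at most $n-1$. From the distinguished triangle $\scL^{-1}\xrightarrow{s}\scO_{X}\to\scO_{H}^{\bullet}$, tensoring with $\scL^{-j}\otimes K$ and applying $\RGamma(X,-)$, the two hypothesized vanishings at consecutive indices $j$ and $j+1$ combine in the long exact sequence to yield $\RGamma(X,\scL^{-j}\otimes\scO_{H}^{\bullet}\otimes K)=0$ for $j=0,\ldots ,n-1$. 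The projection formula along $\iota\colon H\hookrightarrow X$ identifies this with $\RGamma(H,\scL^{-j}|_{H}\otimes L\iota^{\ast}K)$, and since $\scL|_{H}$ is again globally generated and ample, the inductive hypothesis in $D^{-}(H)$ yields $L\iota^{\ast}K=0$.

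Returning to the original triangle, we obtain an isomorphism $\scL^{-1}\otimes K\isom K$ in $D^{-}(X)$, hence $H^{p}(K)\cong\scL^{N}\otimes H^{p}(K)$ for every $N\in\bZ$ and every $p$. For each fixed $p$, choosing $N\gg 0$ and invoking Serre's theorem makes the right-hand side globally generated with vanishing higher cohomology, so $H^{p}(K)$ itself inherits these properties; the hypercohomology spectral sequence therefore degenerates to $\RGamma(X,K)\cong\bigoplus_{p}\Gamma(X,H^{p}(K))[-p]$. Because $\RGamma(X,K)=0$ by the hypothesis at $i=0$, each $\Gamma(X,H^{p}(K))=0$, and global generation then forces $H^{p}(K)=0$ for every $p$, completing the induction. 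The principal obstacle in this plan is the Bertini-type selection of $s$: one must exclude sections vanishing identically on a fiber of maximal dimension, which is handled by global generation together with a prime-avoidance argument applied to $H^{0}(X,\scL)$, after a harmless flat extension of the residue fields of $Y$ if necessary.
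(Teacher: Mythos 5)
The paper does not actually prove this lemma: it is quoted from Van den Bergh, whose argument runs along different lines (one produces $n+1$ global sections of $\scL$ with no common zero --- possible because the fibres are at most $n$-dimensional --- and the resulting Koszul complex places every power of $\scL$ in the triangulated subcategory generated by $\scO_X,\scL^{\pm 1},\dots,\scL^{\pm n}$, after which Serre's theorem applied to the top cohomology sheaf of $\scK$ finishes the job). Your route is genuinely different: induct on the fibre dimension via a relative hyperplane section $H\in|\scL|$, deduce $\bL\iota^{*}\scK=0$ from the inductive hypothesis on $H$, conclude $\scL^{-1}\otimes\scK\cong\scK$, and then apply Serre. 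This is an attractive organisation --- the isomorphism $\scL^{-1}\otimes\scK\cong\scK$ hands you twists of both signs simultaneously, which is exactly what makes the final step painless, and it matches the hyperplane manipulations the paper performs in \S 2. The base case, the long-exact-sequence bookkeeping, the application of the inductive hypothesis, and the concluding spectral-sequence argument are all correct.

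The genuine gap is the step you flag but do not resolve: producing one section $s\in H^{0}(X,\scL)$ whose zero locus contains no $n$-dimensional component of \emph{any} fibre of $f$ (and which, for your identification $\scO_{H}^{\bullet}\otimes\scK\cong\iota_{*}\bL\iota^{*}\scK$, must also miss the associated points of $X$, i.e.\ be a regular section). Prime avoidance in $H^{0}(X,\scL)$ only excludes finitely many points, whereas the set of generic points of maximal-dimensional fibre components is typically infinite --- already for $X=\bP^{n}\times Y\to Y$ every fibre is maximal --- and a module over an infinite field can perfectly well be a union of infinitely many proper submodules, so the argument as stated does not get off the ground. A repair is possible but is real work: note that generation can be checked locally on $Y$ and descends along faithfully flat base change (so one may assume $R$ local with infinite residue field, via $R\to R(x)$); choose finitely many sections $s_{0},\dots,s_{N}$ generating $\scL$; observe by Chevalley semicontinuity applied to the universal divisor that the locus of $(t,y)\in\bA^{N+1}_{Y}$ with $\dim\bigl(Z(\textstyle\sum t_{i}s_{i})\cap X_{y}\bigr)\ge n$ is closed with proper fibres over $Y$; and use that a closed subset of $\Spec R$ missing the closed point is empty to reduce to avoiding a finite union of proper subspaces in the special fibre of $\bA^{N+1}_{R}$. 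Some such argument (or a restructuring that avoids single ``general'' sections altogether, as in Van den Bergh's Koszul approach) is still owed before the inductive step is legitimate.
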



\section{Tilting generators}\label{section:tilting}
In this section, we define tilting generators on algebraic varieties.

Let $f \colon X\to Y=\Spec R$ be a projective morphism 
from a Noetherian scheme to an affine Noetherian scheme. 

\begin{defn}\label{def:tilting}
Let $\scE$ be a perfect complex on $X$:
that is, locally $\scE$ is quasi-isomorphic to
 a bounded complex of finitely generated free $\scO_X$-modules.
\begin{enumerate}
\item
$\scE$ is said to be tilting
if $\Hom_X^i(\scE,\scE)=0$ for any $i\ne 0$.
\item
$\scE$ is called a generator of $D^-(X)$ if  
the vanishing $\RHom_X(\scE,\scK)=0$ for $\scK\in D^-(X)$ 
implies $\scK=0$.
\end{enumerate}  
\end{defn}
\begin{exa}
The vector bundle 
$
\scE=\bigoplus_{i=0}^n \mathcal{O}_{\mathbb{P}^n}(-i)
$
on $\bP ^n$ is a tilting generator by Lemma \ref{lem322}.
This fact was first observed by Beilinson~\cite{Bei} .
\end{exa}
\noindent
For a tilting vector bundle $\scE$ on $X$, we denote by $A$ 
the endomorphism algebra $\End_X(\scE)$ and 
define functors:
\begin{align*}
\Phi(-)
&=\RHom_X(\scE,-) \colon D^-(X)\longrightarrow D^-(A), \\ 
\Psi(-)
&=-\Lotimes_{A}\scE \colon D^-(A)\longrightarrow D^-(X).
\end{align*}
Note that $\Psi$ is a left adjoint functor of $\Phi$
and $\Phi\circ\Psi \cong \id_{D^-(A)}$. 

The following lemma explains a characteristic property of    
tilting generators.
The statement is well-known,
but for the reader's convenience, 
we supply the proof.
 
\begin{lem}\label{lemma:tilting}
In the above setting, assume furthermore that $\scE$ is a generator
of $D^-(X)$. Then 
$\Phi$ and $\Psi$ define an equivalence of triangulated categories between 
$D^-(X)$ and $D^-(A)$. This equivalence restricts to an equivalence
 between $D^b(X)$ and $D^b(A)$. 
\end{lem}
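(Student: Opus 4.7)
The plan is to establish the $D^-$ equivalence by a formal triangle-identity argument using the adjunction data and the generator hypothesis, and then to upgrade it to $D^b$ via bounded cohomological amplitude.

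Since $\Psi\dashv\Phi$ and $\Phi\circ\Psi\cong\id_{D^-(A)}$, the unit $\eta\colon\id\to\Phi\Psi$ of the adjunction is already a natural isomorphism, so $\Psi$ is fully faithful. The substantive step is to show the counit $\epsilon_\scK\colon\Psi\Phi(\scK)\to\scK$ is an isomorphism for every $\scK\in D^-(X)$. I would embed $\epsilon_\scK$ into a distinguished triangle
$$
\Psi\Phi(\scK) \xrightarrow{\epsilon_\scK} \scK \to \scC \to \Psi\Phi(\scK)[1]
$$
and apply $\Phi$. The triangle identity $\Phi(\epsilon_\scK)\circ\eta_{\Phi(\scK)}=\id_{\Phi(\scK)}$, combined with the invertibility of $\eta_{\Phi(\scK)}$, forces $\Phi(\epsilon_\scK)$ to be an isomorphism, so $\Phi(\scC)=0$. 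The generator hypothesis on $\scE$ then gives $\scC=0$, so $\epsilon_\scK$ is an isomorphism, and $\Phi,\Psi$ are quasi-inverse equivalences on $D^-$.

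For the restriction to $D^b$, the inclusion $\Phi(D^b(X))\subseteq D^b(A)$ follows from perfection of $\scE$ and projectivity of $f$ with bounded fiber dimension (giving finite cohomological amplitude of $\Phi$ on $\Coh X$), together with the fact that $A=\End_X(\scE)$ is finite over $R$, so that cohomology sheaves are finitely generated $A$-modules. For the reverse inclusion, given $M\in D^b(A)$ and $\scK=\Psi(M)\in D^-(X)$, I would apply $\Phi$ to the truncation triangle $\tau_{\le -n}\scK\to\scK\to\tau_{>-n}\scK$ and use that both $\Phi(\scK)=M$ and $\Phi(\tau_{>-n}\scK)$ are bounded, together with the amplitude bound $\Phi(D^{\le -n}(X))\subseteq D^{\le -n+N}(A)$, to deduce $\Phi(\tau_{\le -n}\scK)=0$ for $n$ sufficiently large; the generator property then yields $\tau_{\le -n}\scK=0$, i.e.\ $\scK\in D^b(X)$. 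The main obstacle is this final amplitude comparison, which is not purely formal and essentially reduces to the finite Tor-dimension of $\scE$ as a left $A$-module; everything else is a formal consequence of the adjunction, the triangle identities, and the generator property.
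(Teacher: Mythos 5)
Your first half---realizing the counit in a distinguished triangle, using the triangle identity to see that $\Phi$ annihilates the cone, and invoking the generator hypothesis---is exactly the paper's argument and is correct, as is the easy inclusion $\Phi(D^b(X))\subseteq D^b(A)$.

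The gap is in the reverse inclusion. From the triangle $\Phi(\tau_{\le -n}\scK)\to M\to \Phi(\tau_{>-n}\scK)$ the amplitude bounds do \emph{not} yield $\Phi(\tau_{\le -n}\scK)=0$. What they give is that the first map is zero (since $\Phi(\tau_{\le -n}\scK)\in D(A)^{\le -n+N}$ while $M\in\module A$, so the relevant Hom-group vanishes once $n>N$), hence a splitting $\Phi(\tau_{>-n}\scK)\cong M\oplus \Phi(\tau_{\le -n}\scK)[1]$; but $\Phi(\tau_{\le -n}\scK)[1]$ lives in degrees $\le -n+N-1$ while $\Phi(\tau_{>-n}\scK)$ lives in degrees $\ge -n+1$, and these ranges overlap as soon as $N\ge 2$---which is precisely the situation of this paper, where $N$ is the maximal fiber dimension. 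Equivalently, the long exact sequence only kills $\scH^i(\Phi(\tau_{\le -n}\scK))$ for $i\le -n+1$ and leaves the degrees $-n+2,\dots,-n+N$ uncontrolled. Your proposed fallback, finite Tor-dimension of $\scE$ as a left $A$-module, is not available either: in the generality of the lemma ($X$ an arbitrary Noetherian scheme) it is essentially equivalent to the statement being proved and is not among the hypotheses. The repair is cheap and is what the paper does: you have already established that $\Phi\colon D^-(X)\to D^-(A)$ is fully faithful, so the vanishing of $\Phi(\phi)$, where $\phi\colon\tau_{\le -n}\scK\to\scK$ is the truncation morphism, forces $\phi=0$; since $\scH^i(\phi)$ is an isomorphism for $i\le -n$, this gives $\scH^i(\scK)=0$ for $i\le -n$, i.e.\ $\scK\in D^b(X)$. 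At this step the tool to invoke is the full faithfulness of $\Phi$, not the generator property, and no Tor-amplitude input is needed.
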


\begin{proof}

The isomorphism $\Phi\circ\Psi \cong \id_{D^-(A)}$ implies 
that the cone $C$ of the adjunction morphism 
$\Psi\circ \Phi (\scF)\to \scF$ for $\scF\in D^-(X)$ 
is annihilated by $\Phi$. 
Since $\scE$ is a generator of $D^-(X)$, $C$ is zero. 
In particular $\Psi\circ \Phi \cong \id_{D^-(X)}$:
 that is, $\Phi$ and $\Psi$
define an equivalence of triangulated categories between 
$D^-(X)$ and $D^-(A)$. 

We can show that this equivalence restricts 
to an equivalence between $D^b(X)$ and $D^b(A)$.
It is obvious that $\Phi (\scF)\in D^b(A)$ for $\scF\in D^b(X)$, 
so we only need to check that $\Psi(M)\in D^b(X)$
for any $M\in D^b(A)$. To prove this fact, we may assume $M\in\module A$.
For a sufficiently small integer $m$, consider the map 
$$
\phi\colon \tau_{<m}\Psi (M)\to \Psi (M)
$$
induced by the canonical truncation $\tau$,
 and apply $\Phi$ to it;
$$
\Phi(\phi)\colon \Phi(\tau_{<m}\Psi (M))\to \Phi\circ\Psi (M)\cong M.
$$
Then the map $\Phi(\phi)$ is zero by the choice of $m$.
Hence the map $\phi$ is also zero, since 
$\Phi\colon D^-(X)\to D^-(A)$ gives an equivalence. 
This implies $\Psi (M)\in D^b(X)$.
\end{proof}
 

\section{Main construction}\label{section: construction}

In this section, we show how to construct tilting generators from ample line bundles. 
The main result in this section is Theorem 
\ref{thm:main}.

\subsection{Setting}\label{setting}

Let $f \colon X\to Y=\Spec R$ be a projective morphism 
from a Noetherian scheme to an affine scheme 
of finite type over a field, 
or an affine scheme of a Noetherian complete local ring.
Suppose that $\bR f_*\scO_X=\scO_Y$ and fibers of $f$ are
at most $n$-dimensional. 
Assume furthermore that there is an ample, 
globally generated line bundle $\scL $ on $X$, satisfying
\begin{equation}\label{eqn:ample2}
\bR^if_*\scL^{-j}=0
\end{equation}
for $i\ge 2, 0<j<n$.
Then as shown in Lemma \ref{lem:vanishing},
we have
\begin{equation}\label{eqn:ample3}
\bR^if_*\scL^j=0
\end{equation}
for all $i>0, j \ge 0$. Furthermore, we know that 
$\bigoplus _{i=0}^n\scL^{-i}$ is a generator of $D^-(X)$
by Lemma \ref{lem322}.

\begin{rem}
If we assume that (\ref{eqn:ample2}) holds 
for $i\ge 1$ and $0<j\le n$, then 
$\bigoplus _{i=0}^n\scL^{-i}$ is already a 
tilting generator, so there is nothing left to prove. 
\end{rem}

\subsection{Orientation}\label{subsection:orientation}
For illustrative purposes, before explaining our construction,
we sketch a proof of Theorem \ref{Intro:1}.\\

\noindent
{\it Proof of Theorem \ref{Intro:1}.}
First take the extension corresponding to a set of a generators 
of the $R$-module
$H^1(X,\scL^{-1})$;
\begin{equation}\label{eqn:orientation}
0\to \scL^{-1}\to \scN \to \scO_X^{\oplus r} \to 0.
\end{equation}
Then by a direct calculation, 
we can show that $\scE=\scO_X\oplus \scN$ is a tilting object.
We can also see that $\scE$ is a generator of $D^-(X)$
by Lemma \ref{lem322}.
\qed \\

In the following subsections, we construct tilting vector bundles 
$\scE_k$ inductively as follows.
First take $\scE_0=\scO_X$, which is tilting by the assumption
$\bR f_*\scO_X=\scO_Y$ (or (\ref{eqn:ample3})).
Giving a tilting vector bundle $\scE_{k-1}$ with $0<k\le n-1$,
take the extension (\ref{eqn:short}) as (\ref{eqn:orientation})
and 
define a new tilting vector bundle
$\scE_k$ as $\scE_{k-1}\oplus\scN_{k-1}$.
To construct a tilting generator $\scE_n$, we need 
a slightly more careful treatment, as explained in \S \ref{subsection:gluing}.

\subsection{Inductive construction of tilting vector bundles}
\label{inductive}

Under the setting in \S \ref{setting}, we shall construct tilting
 vector bundles $\scE_k$ for $0\le k\le n-1$
 inductively.
 \begin{step}
 Induction hypotheses. 
 \end{step}
Put $\scE_0=\scO_X$ and fix an integer $k$ with $0<k\le n-1$. 
Assume that we have a tilting vector bundle $\scE_{k-1}$ on $X$.
Let us denote the endomorphism algebra $\End_X \scE_{k-1}$ by 
$A_{k-1}$. We also define the following functors:
\begin{align*}
\Phi_{k-1}(-)
&=\RHom_X(\scE_{k-1},-) \colon D(X)\longrightarrow D(A_{k-1}), \\ 
\Psi_{k-1}(-)
&=-\Lotimes_{A_{k-1}}\scE_{k-1} \colon D^-(A_{k-1})
\longrightarrow D^-(X).
\end{align*}
Note that $\Phi_{k-1}$ restricts to the functor
$\Phi_{k-1}\colon D^{-}(X) \to D^{-}(A)$, giving the 
right adjoint functor of $\Psi_{k-1}$. 

As induction hypotheses, we assume the following. 
\begin{itemize}
\item For any $i\ne 0, 1$ and any $l$ with $0< l\le n-1$, we have 
\begin{equation}\label{eqn1}
\Hom^i_X(\scE_{k-1},\scL^{-l})=0. 
\end{equation}
\item For any $i\ne 0$ and any $l$ with $k-1\le l\le n$, we have 
\begin{equation}\label{eqn2}  
\Hom_X ^i(\scL^{-l},\scE_{k-1})=0. 
\end{equation}
\end{itemize}
Note that if $k=1$, (\ref{eqn1}) and (\ref{eqn2}) 
hold by (\ref{eqn:ample2}) and (\ref{eqn:ample3}). 

\begin{step}\label{step:construction}
Construction of $\scE_{k}$. 
\end{step}

Take a free $A_{k-1}$ resolution of $\Phi_{k-1}(\scL^{-k})$
and denote it by $P_{k-1} $.
Since $\scH^{i}(\Phi_{k-1}(\scL^{-k}))=0$ unless $i=0, 1$ by 
(\ref{eqn1}), we can take $P_{k-1}$ satisfying 
$P_{k-1}^i=0$ for $i\ge 2$. 
We obtain a natural morphism
$\sigma_{\ge 1}(P_{k-1} )\to P_{k-1} $,
and hence we have a morphism
$\Psi_{k-1}(\sigma_{\ge 1}(P_{k-1} ))
\to \scL^{-k}$, since $\Psi_{k-1}$ 
is a left adjoint functor of $\Phi_{k-1}$.
Define an object $\scN_{k-1}\in D^-(X)$ 
to be the cone of this morphism; 
\begin{align}\label{eq:cone}
\Psi_{k-1}(\sigma_{\ge 1}(P_{k-1} )) \to \scL^{-k}
 \to \scN_{k-1} \to \Psi_{k-1}
(\sigma_{\ge 1}(P_{k-1} ))[1],
\end{align}
and we also define 
$$\scE_k=\scE_{k-1}\oplus \scN_{k-1}.$$
Applying $\Phi_{k-1}$ to (\ref{eq:cone}) and using the 
isomorphism, 
$$
\Phi_{k-1}\circ\Psi_{k-1}\cong {\id} _{D^-(A_{k-1})},
$$
 we have $\Phi_{k-1}(\scN_{k-1}) 
\cong \sigma_{<1}(P_{k-1} )$.
Furthermore, we know that $\Psi_{k-1}(\sigma_{\ge 1}(P_{k-1} ))$ is 
isomorphic to an object of the form 
$\scE_{k-1}^{\oplus r_{k-1}}[-1]$ for some $r_{k-1}\ge 0$. Hence, 
there is a short exact sequence of coherent sheaves; 
\begin{equation}\label{eqn:short}
0\to \scL^{-k} \to \scN_{k-1} \to \scE_{k-1}^{\oplus r_{k-1}}\to 0.
\end{equation}
Consequently, $\scN_{k-1}$ and $\scE_k$ are vector bundles on $X$.

\begin{step}
$\scE_k$ satisfies the induction hypotheses. 
\end{step}
We shall check below that $\scE_k$ has similar properties to
(\ref{eqn1}) and (\ref{eqn2}).

\begin{cla}\label{cla:tilting1}
$\Hom_X^i(\scE_k,\scL^{-l})=0$ 
for any $i\ne 0,1$ and any $l$ with $0< l\le n-1$
\end{cla}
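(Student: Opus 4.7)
The plan is to use the direct-sum decomposition $\scE_k = \scE_{k-1}\oplus \scN_{k-1}$ and reduce the claim to the corresponding vanishing for $\scN_{k-1}$, since the $\scE_{k-1}$-summand is already covered by the induction hypothesis (\ref{eqn1}). Thus I need to show
\[
\Hom_X^i(\scN_{k-1},\scL^{-l}) = 0
\quad\text{for } i\ne 0,1,\ 0<l\le n-1.
\]

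To extract this, I would apply $\RHom_X(-,\scL^{-l})$ to the defining distinguished triangle (\ref{eq:cone}) and examine the resulting long exact sequence
\[
\cdots\to \Hom^{i-1}(\Psi_{k-1}(\sigma_{\ge 1}P_{k-1}),\scL^{-l})\to \Hom^i(\scN_{k-1},\scL^{-l})\to \Hom^i(\scL^{-k},\scL^{-l})\to \cdots
\]
Recall from Step \ref{step:construction} that $\Psi_{k-1}(\sigma_{\ge 1}P_{k-1})\cong \scE_{k-1}^{\oplus r_{k-1}}[-1]$, so the left-hand term rewrites as $\Hom^{i}(\scE_{k-1},\scL^{-l})^{\oplus r_{k-1}}$, which vanishes for $i\ne 0,1$ by the induction hypothesis (\ref{eqn1}).

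The remaining task, which I expect to be the only real bookkeeping point, is to verify $\Hom^i(\scL^{-k},\scL^{-l})=H^i(X,\scL^{\,k-l})=0$ for $i\ge 2$ (negative $i$ being trivial since $\scL^{k-l}$ is a sheaf). Here $k-l$ takes values in $[-(n-2),n-2]$. When $k-l\ge 0$, Lemma \ref{lem:vanishing} (and the affineness of $Y$) gives the vanishing. When $k-l<0$, writing $j=l-k$ we have $0<j<n$, and (\ref{eqn:ample2}) gives $\bR^if_*\scL^{-j}=0$ for $i\ge 2$, hence $H^i(X,\scL^{-j})=0$. In either case the middle term of the long exact sequence vanishes for $i\ne 0,1$, which finishes the argument.

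The main conceptual obstacle is not in this claim itself, but in making sure the index ranges for $\scL^{-k}$ and $\scL^{-l}$ stay inside the window where the vanishing hypotheses apply; the shift by $[-1]$ in $\Psi_{k-1}(\sigma_{\ge 1}P_{k-1})$ conveniently matches the $\{0,1\}$ exceptional range in (\ref{eqn1}), so no additional input beyond the induction hypothesis and the ample-line-bundle vanishings is needed.
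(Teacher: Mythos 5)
Your proof is correct and follows essentially the same route as the paper: both apply $\Hom_X(-,\scL^{-l})$ to the defining triangle (equivalently, the short exact sequence (\ref{eqn:short})) for $\scN_{k-1}$, kill the $\scE_{k-1}$-terms by the induction hypothesis (\ref{eqn1}), and kill $\Hom^i(\scL^{-k},\scL^{-l})$ by (\ref{eqn:ample2}) and (\ref{eqn:ample3}). Your write-up just makes explicit the index bookkeeping that the paper leaves to the reader.
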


\begin{proof}
Claim \ref{cla:tilting1} follows from (\ref{eqn:ample2}), 
(\ref{eqn:ample3}), 
(\ref{eqn1})
and the long exact sequence 
$$ \to \Hom_X^i(\scE_{k-1}^{\oplus r},\scL^{-l})\to
\Hom_X^i(\scN_{k-1},\scL^{-l})\to \Hom_X^i(\scL^{-k},\scL^{-l}) \to .
$$
\end{proof}

\begin{cla}\label{cla:tilting2}
$\Hom_X^i(\scL^{-l},\scE_k)=0$ 
for any $i\ne 0$ and any $l$ with $k\le l\le n$.
\end{cla}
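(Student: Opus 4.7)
My plan for Claim \ref{cla:tilting2} is to mirror the strategy of Claim \ref{cla:tilting1}: use the direct sum decomposition $\scE_k = \scE_{k-1} \oplus \scN_{k-1}$ to reduce to two separate vanishing statements, handle the $\scE_{k-1}$ summand by the induction hypothesis, and handle the $\scN_{k-1}$ summand by applying $\Hom_X(\scL^{-l},-)$ to the short exact sequence (\ref{eqn:short}).

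More concretely, since $\Hom_X^i(\scL^{-l},\scE_k) = \Hom_X^i(\scL^{-l},\scE_{k-1}) \oplus \Hom_X^i(\scL^{-l},\scN_{k-1})$, the first summand vanishes for $i\ne 0$ and $k \le l \le n$ directly from the induction hypothesis (\ref{eqn2}), whose range $k-1 \le l \le n$ contains $k \le l \le n$. For the second summand I would apply $\Hom_X(\scL^{-l},-)$ to (\ref{eqn:short}) and examine the long exact sequence
$$ \cdots \to \Hom_X^i(\scL^{-l},\scL^{-k}) \to \Hom_X^i(\scL^{-l},\scN_{k-1}) \to \Hom_X^i(\scL^{-l},\scE_{k-1}^{\oplus r_{k-1}}) \to \cdots. $$
The outer terms vanish for $i \ne 0$: the left term equals $H^i(X,\scL^{l-k})$, which vanishes for $i>0$ by (\ref{eqn:ample3}) since $l-k \ge 0$, and trivially for $i<0$; the right term vanishes again by the induction hypothesis (\ref{eqn2}). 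Hence $\Hom_X^i(\scL^{-l},\scN_{k-1}) = 0$ for $i \ne 0$, giving the claim.

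There is no real obstacle here; the argument is a routine invocation of (\ref{eqn:ample3}) and the induction hypothesis using the defining exact sequence of $\scN_{k-1}$. The only point to be careful about is the bookkeeping of the index ranges, in particular verifying that the range $k \le l \le n$ required in Claim \ref{cla:tilting2} is compatible with the range $k-1 \le l \le n$ available from the induction hypothesis when applied to $\scE_{k-1}$, which it is. The more substantive work for the induction is presumably carried out elsewhere, namely in setting up the gluing step that produces the final tilting generator $\scE_n$.
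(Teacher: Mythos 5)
Your argument is exactly the paper's proof of Claim \ref{cla:tilting2}: the paper likewise applies $\Hom_X(\scL^{-l},-)$ to the sequence (\ref{eqn:short}) and kills the outer terms of the resulting long exact sequence using (\ref{eqn:ample3}) and the induction hypothesis (\ref{eqn2}). Your index bookkeeping ($l-k\ge 0$ so that (\ref{eqn:ample3}) applies, and $k\le l\le n$ contained in $k-1\le l\le n$) is correct, so the proposal is fine as written.
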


\begin{proof}
Claim \ref{cla:tilting2} follows from 
(\ref{eqn:ample3}), (\ref{eqn2})
and the long exact sequence 
$$
\to \Hom_X^i(\scL^{-l},\scL^{-k})\to
\Hom_X^i(\scL^{-l}, \scN_{k-1})\to 
\Hom_X^i(\scL^{-l}, \scE_{k-1}^{\oplus r}) \to.
$$
\end{proof}

\begin{cla}\label{cla:tilting3}
$\scE_k$ is a tilting object.
\end{cla}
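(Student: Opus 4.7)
The plan is to show $\Hom_X^i(\scE_k,\scE_k)=0$ for $i\ne 0$ by decomposing along $\scE_k=\scE_{k-1}\oplus\scN_{k-1}$ into the four pieces $(\scE_{k-1},\scE_{k-1})$, $(\scE_{k-1},\scN_{k-1})$, $(\scN_{k-1},\scE_{k-1})$ and $(\scN_{k-1},\scN_{k-1})$, and handling each separately. The first piece vanishes immediately from the induction hypothesis that $\scE_{k-1}$ is tilting.

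For the second piece $\Hom_X^i(\scE_{k-1},\scN_{k-1})$, I would avoid using the short exact sequence (\ref{eqn:short}) directly, since it would only yield the weaker vanishing for $i\ne 0,1$ recorded in Claim \ref{cla:tilting1}. Instead, I would exploit the derived description $\Phi_{k-1}(\scN_{k-1})\cong\sigma_{<1}(P_{k-1})$ established right after (\ref{eq:cone}), which gives $\Hom_X^i(\scE_{k-1},\scN_{k-1})\cong\scH^i(\sigma_{<1}(P_{k-1}))$. Since $P_{k-1}$ is concentrated in degrees $\le 1$, the stupid truncation $\sigma_{<1}(P_{k-1})$ sits in degrees $\le 0$, killing positive cohomology; for negative degrees, the triangle $\sigma_{\ge 1}(P_{k-1})\to P_{k-1}\to\sigma_{<1}(P_{k-1})$ combined with the fact that $P_{k-1}$ is a resolution (hence acyclic in negative degrees) and $\sigma_{\ge 1}(P_{k-1})$ is concentrated in degree $1$ gives $\scH^i(\sigma_{<1}(P_{k-1}))=0$ for $i<0$. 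This is the conceptual heart of why $\scN_{k-1}$ was defined via the cone construction in the first place: it is engineered to kill the extension class in $\Hom_X^1(\scE_{k-1},\scL^{-k})$.

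For the third piece $\Hom_X^i(\scN_{k-1},\scE_{k-1})$, I would apply $\Hom_X(-,\scE_{k-1})$ to (\ref{eqn:short}); its flanking terms $\Hom_X^i(\scE_{k-1}^{\oplus r_{k-1}},\scE_{k-1})$ and $\Hom_X^i(\scL^{-k},\scE_{k-1})$ vanish for $i\ne 0$ by the induction hypothesis and by (\ref{eqn2}) with $l=k$ (which lies in the allowed range since $k\le n-1$). For the fourth piece, I would apply $\Hom_X(-,\scN_{k-1})$ to (\ref{eqn:short}); the $\scE_{k-1}^{\oplus r_{k-1}}$ contribution reduces to the second piece already handled, and for the $\scL^{-k}$ contribution I would compute $\Hom_X^i(\scL^{-k},\scN_{k-1})$ by applying $\Hom_X(\scL^{-k},-)$ to (\ref{eqn:short}), using $\Hom_X^i(\scL^{-k},\scL^{-k})=H^i(X,\scO_X)=0$ for $i>0$ (a consequence of $\bR f_*\scO_X=\scO_Y$ with $Y$ affine) together with $\Hom_X^i(\scL^{-k},\scE_{k-1})=0$ for $i\ne 0$ from (\ref{eqn2}).

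The main obstacle will be the second piece: everything else is a mechanical application of the short exact sequence (\ref{eqn:short}) plus the inductive hypotheses, but that piece genuinely requires going through the derived picture $\Phi_{k-1}(\scN_{k-1})\cong\sigma_{<1}(P_{k-1})$ and carefully extracting that its cohomology is concentrated in degree $0$. All other estimates then reduce to bookkeeping against the ranges of $l$ in the hypotheses (\ref{eqn1}) and (\ref{eqn2}), which were calibrated for exactly this induction step.
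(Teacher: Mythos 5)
Your proof is correct and follows essentially the same route as the paper: the vanishing of $\Hom_X^i(\scE_{k-1},\scN_{k-1})$ via $\Phi_{k-1}(\scN_{k-1})\cong\sigma_{<1}(P_{k-1})$ is exactly the paper's (\ref{eqn:tilting1}), and the other two pieces are handled by the same long exact sequences from (\ref{eqn:short}). The only cosmetic difference is that you inline the computation of $\Hom_X^i(\scL^{-k},\scN_{k-1})$ where the paper simply cites Claim \ref{cla:tilting2} (whose proof is that very computation).
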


\begin{proof}
From $\Phi_{k-1}(\scN_{k-1})\cong 
\sigma_{<1}(P_{k-1} )$, we obtain 
\begin{equation}\label{eqn:tilting1}
\Hom_X^i(\scE_{k-1},\scN_{k-1})=\scH^i(\Phi_{k-1}(\scN_{k-1}))=0
\end{equation}
for all $i\ne 0$.
By (\ref{eqn2}) and the long exact sequence
$$\to \Hom_X^i(\scE_{k-1}^{\oplus r},\scE_{k-1})\to
\Hom_X^i(\scN_{k-1},\scE_{k-1})\to \Hom_X^i(\scL^{-k},\scE_{k-1}) \to,
$$
we have
\begin{equation}\label{eqn:tilting2}
\Hom_X^i(\scN_{k-1},\scE_{k-1})=0
\end{equation}
for all $i\ne 0$. Finally, by Claim \ref{cla:tilting2}, (\ref{eqn:tilting1})
and the long exact sequence
$$
\to \Hom_X^i(\scE_{k-1}^{\oplus r},\scN_{k-1})\to
\Hom_X^i(\scN_{k-1},\scN_{k-1})\to \Hom_X^i(\scL^{-k},\scN_{k-1}) \to,
$$
we have
\begin{equation}\label{eqn:tilting3}
\Hom_X^i(\scN_{k-1},\scN_{k-1})=0
\end{equation}
for all $i\ne 0$.
The equalities (\ref{eqn:tilting1}), (\ref{eqn:tilting2}) and 
(\ref{eqn:tilting3}) imply that 
$\scE_k$ is a tilting object.
\end{proof}
By induction on $k$,
we can construct a tilting vector bundle $\scE_{n-1}$. 

\begin{rem}\label{rem:process}
We cannot apply our method in this subsection
to construct $\scE_n$. 
In Step~\ref{step:construction}, 
we need the vanishing of $\Hom^i_X(\scE_{n-1}, \scL^{-n})$ for $i\ge 2$. 
However this is not guaranteed by the induction hypothesis (\ref{eqn1}).
\end{rem}


\subsection{Gluing t-structures}\label{subsection:gluing}
The vector bundle $\scE_{n-1}$ does not 
generate the category $D^{-}(X)$ yet (see Lemma \ref{lem:generator}),
so we need one more step to construct a tilting generator 
$\scE_{n}$.
As we mentioned in Remark~\ref{rem:process}, 
a similar method in \S \ref{inductive} does not work. 
In this subsection, we make some assumptions
and construct a tilting generator $\scE_{n}$ of $D^-(X)$.  

As in \S \ref{inductive}, we
 define as $A_{n-1}=\End_X \scE_{n-1}$ and 
\begin{align*} 
\Phi_{n-1}(-)&=\RHom_X(\scE_{n-1},-)
\colon D(X)\longrightarrow D(A_{n-1}), \notag\\  
\Psi_{n-1}(-)&=-\Lotimes_{A_{n-1}}\scE_{n-1} \colon D^-(A_{n-1})
\longrightarrow D^-(X).\notag
\end{align*}
Take a free $A_{n-1}$ resolution $P_{n-1}$ of $\Phi_{n-1}(\scL^{-n})$.
Then each
$\scH ^i(P_{n-1} )$ vanishes for $i<0$ but does not necessarily 
vanish for $i\ge 2$. (cf.~Remark~\ref{rem:process}.)
As in Step~\ref{step:construction} in \S \ref{inductive}, 
define an object $\scN_{n-1}\in D^b(X)$ such that $\scN_{n-1}$ fits into a 
triangle
\begin{equation}\label{eqn:triangle}
\Psi_{n-1}(\sigma_{\ge 1}(P_{n-1} )) \to \scL^{-n}
 \to \scN_{n-1} \to \Psi_{n-1}
(\sigma_{\ge 1}(P_{n-1} ))[1].
\end{equation}
Note that $\scN_{n-1}$ is a perfect complex, since so is 
$\Psi_{n-1}(\sigma_{\ge 1}(P_{n-1} ))$.
We again define 
$$\scE_n=\scE_{n-1}\oplus\scN_{n-1}.$$
Although we cannot conclude that $\scE_n$ is tilting, 
we consider the 
functor $\Phi_n(-)=\RHom_X(\scE_{n},-)$.

Let us define $\scC_k$
for $0\le k\le n$ to be the full subcategory of 
the unbounded derived category $D(X)$, 
$$\scC_k =\{ \scK \in D(X) \mid \Phi_{k}(\scK) =0\}.$$
\begin{lem}\label{lem:generator}
Let $k$ be an integer such that $0\le k \le n$. Then
$\scK\in\scC_{k}$ if and only if 
$$
\RHom_X(\bigoplus _{i=0}^k\scL^{-i},\scK)=0.
$$
In particular, $\scE_{n}$ is a generator of $D^-(X)$. 
\end{lem}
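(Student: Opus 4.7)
The plan is to prove the biconditional by induction on $k$, exploiting the fact that the summands of $\scE_k = \scE_{k-1} \oplus \scN_{k-1}$ can be reconstructed from $\scE_{k-1}$ and $\scL^{-k}$ via the short exact sequence (\ref{eqn:short}) or the triangle (\ref{eqn:triangle}). The base case $k=0$ is immediate since $\scE_0 = \scO_X = \scL^0$, so $\Phi_0(\scK) = \RHom_X(\scO_X, \scK) = \RHom_X(\scL^0, \scK)$.

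For the inductive step with $1 \le k \le n-1$, I would apply $\RHom_X(-,\scK)$ to (\ref{eqn:short}) to obtain a long exact sequence relating $\RHom_X(\scN_{k-1},\scK)$, $\RHom_X(\scL^{-k},\scK)$, and $\RHom_X(\scE_{k-1},\scK)^{\oplus r_{k-1}}$. Combined with the decomposition $\Phi_k(\scK) = \RHom_X(\scE_{k-1},\scK) \oplus \RHom_X(\scN_{k-1},\scK)$ and the induction hypothesis, both directions follow: if $\RHom_X(\bigoplus_{i=0}^{k}\scL^{-i},\scK) = 0$, then the induction hypothesis kills $\RHom_X(\scE_{k-1},\scK)$ and the exact sequence then kills $\RHom_X(\scN_{k-1},\scK)$; conversely, $\Phi_k(\scK) = 0$ kills $\RHom_X(\scL^{-i},\scK)$ for $i < k$ by the induction hypothesis, and the same exact sequence extracts $\RHom_X(\scL^{-k},\scK) = 0$ from the vanishing of $\RHom_X(\scN_{k-1},\scK)$.

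For $k = n$ the same strategy goes through using the triangle (\ref{eqn:triangle}) in place of (\ref{eqn:short}); the only new feature is that $\Psi_{n-1}(\sigma_{\ge 1}(P_{n-1}))$ is no longer a shift of copies of $\scE_{n-1}$ but is $\Psi_{n-1}$ applied to a complex of free $A_{n-1}$-modules. The adjunction
$$\RHom_X(\Psi_{n-1}(M),\scK) \cong \RHom_{A_{n-1}}(M,\Phi_{n-1}(\scK))$$
ensures that $\RHom_X(\Psi_{n-1}(\sigma_{\ge 1}(P_{n-1})),\scK) = 0$ whenever $\Phi_{n-1}(\scK) = 0$, so applying $\RHom_X(-,\scK)$ to (\ref{eqn:triangle}) yields $\RHom_X(\scN_{n-1},\scK) \cong \RHom_X(\scL^{-n},\scK)$, and the reasoning concludes exactly as in the previous case.

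The ``in particular'' assertion is then immediate from Lemma \ref{lem322}: for $\scK \in D^-(X)$ with $\Phi_n(\scK) = 0$ the biconditional gives $\RHom_X(\bigoplus_{i=0}^n \scL^{-i},\scK) = 0$, which by Lemma \ref{lem322} forces $\scK = 0$. There is no genuine obstacle; the only subtle bookkeeping point is that at $k = n$ the complex $\sigma_{\ge 1}(P_{n-1})$ need not be bounded, but this causes no trouble because $\Phi_{n-1}(\scK) = 0$ kills $\RHom_X(\Psi_{n-1}(M),\scK)$ for every $M \in D^-(A_{n-1})$ via the adjunction above.
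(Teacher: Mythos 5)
Your proof is correct and follows essentially the same route as the paper: induction on $k$, using the short exact sequence (\ref{eqn:short}) for $k\le n-1$ and the triangle (\ref{eqn:triangle}) for $k=n$ to identify $\RHom_X(\scN_{k-1},\scK)$ with $\RHom_X(\scL^{-k},\scK)$ once $\RHom_X(\scE_{k-1},\scK)=0$, then invoking Lemma \ref{lem322}. Your use of the adjunction at $k=n$ is just a restatement of the paper's observation that each term of $\Psi_{n-1}(\sigma_{\ge 1}(P_{n-1}))$ is a direct sum of copies of $\scE_{n-1}$ (and note $\sigma_{\ge 1}(P_{n-1})$ is in fact bounded, so your worry there is moot).
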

\begin{proof}
The proof proceeds by induction on $k$.
First, note that the statement is true for $k=0$, since $\scO_X=\scE_0$.
For $0<k\le n-1$, we obtain from (\ref{eqn:short}) that
\begin{align*}
\scK\in\scC_k &\iff \RHom_X (\scE_{k-1},\scK)
\cong \RHom_X (\scL^{-k},\scK)=0 \\
 &\iff \scK\in\scC_{k-1} \mbox{ and } \RHom_X (\scL^{-k},\scK)=0.
\end{align*}
For $k=n$, 
we have a similar conclusion by (\ref{eqn:triangle}), since
each term of the complex 
$\Psi_{n-1}(\sigma_{\ge 1}(P_{n-1} ))[1]$ 
is a direct sum of $\scE_{n-1}$. 

Suppose that 
$
\RHom_X(\scE_n,\scK)=0
$
for $\scK \in D^-(X)$. Then, the assertion we proved above and  
Lemma \ref{lem322} imply that $\scK =0$, which implies the last statement.
\end{proof}
%
 
 
\begin{rem}\label{rem:easycase}
\begin{enumerate}
\item
In the setting in \S \ref{setting},
assume furthermore 
\begin{align}\label{eq:assume}
\bR^if_*\scL^{-n}=0
\end{align}
for $i\ge 2$: that is,
the vanishing in (\ref{eqn:ample2}) for $j=n$. 
Then we can show that $\scE_{n}$ 
is a tilting vector bundle that generates $D^-(X)$ as follows:
In this case, we can show $\Hom^i_X(\scE_{n-1}, \scL^{-n})=0$ for $i\ge 2$
as Claim \ref{cla:tilting2} and so
the inductive construction in \S \ref{inductive} works for $\scE_n$ 
(see Remark \ref{rem:process}).
By the lemma above, $\scE_n$ is a generator.

In particular, in this extra condition (\ref{eq:assume}) for $n=2$, 
our main Theorem~\ref{thm:main0} becomes rather obvious.
\item
In (i),
there is a short exact sequence of coherent sheaves
\begin{equation}\label{eqn:shortk}
0\to \scL^{-k} \to \scN_{k-1} \to \scE_{k-1}^{\oplus r_{k-1}}\to 0
\end{equation}
for all $k$ with $1\le k\le n$, and some $r_{k-1}\ge 0$. 
Moreover we have
$$
\scE_n=\scO_X\oplus\bigoplus_{k=0}^{n-1}\scN_k.
$$
We can easily see that 
the dual vector bundle $\scE^\vee$ of $\scE$
is also a tilting generator of $D^-(X)$. 
\end{enumerate}
\end{rem}

Let us return to the situation in \S \ref{setting}.
Instead of assuming (\ref{eq:assume}), we 
shall work under the following assumption until the end of 
this section. 

\NI
\begin{assum}\label{assum:induction}
For an object
$\scK \in D(X)$, 
if we have the equality
$$
\RHom_X(\bigoplus_{i=0}^{n-1}\scL^{-i},\scK)=0,
$$ 
then the equality
$$
\RHom_X (\bigoplus_{i=0}^{n-1}\scL^{-i},\scH^k(\scK))=0
$$
holds for all $k$.
\end{assum}

In \S \ref{section:2dimensional} and \S \ref{section:G(2,4)},
 we will study the cases 
where Assumption~\ref{assum:induction} holds. 
Assumption~\ref{assum:induction} means that
$\scK \in\scC_{n-1}$ implies 
$\scH^k(\scK)\in \scC_{n-1}$ for all $k$. 
Then we can define a t-structure on $\scC_{n-1}$ induced by 
the standard one on $D(X)$.
Next we introduce the triangulated category
$$
D^{\dag}(X)= \left\{ \scK \in D(X) \bigm|
\Phi_{n-1}(\scK)\in D^b( A_{n-1}) \right\}.
$$
Note that $\Psi_{n-1}$ defines a functor from 
$D^b(A_{n-1})$ to $D^{\dag}(X)$, 
since $\Phi_{n-1}\circ \Psi_{n-1}\cong \id_{D^b(A_{n-1})}$.
Hence it is a left adjoint functor of 
$\Phi_{n-1}\colon D^{\dag}(X)\to D^b(A_{n-1})$.
The advantage of considering $D^{\dag}(X)$ is the existence 
of a right adjoint functor 
$$\Psi_{n-1}'\colon D^b(A_{n-1}) \to D^{\dag}(X)$$
of $\Phi_{n-1}\colon D^{\dag}(X)\to D^b(A_{n-1})$ 
(see Lemma \ref{lem:adjoint}). 
Therefore, we can construct a new t-structure on 
$D^{\dag}(X)$ by gluing 
the t-structure on $\scC_{n-1}$ (with perversity $p\in \mathbb{Z}$) 
and the standard t-structure on
$D^b(A_{n-1})$ via the exact triple of triangulated categories 
\cite[pp.~286]{GM};
$$
\scC_{n-1}\stackrel{i_{n-1}}{\to} D^{\dag}(X)\to D^b(A_{n-1}).
$$ 
Let $i_{n-1}^{\ast}$, $i_{n-1}^{!}\colon D^{\dag}(X) \to \scC_{n-1}$
 be the left and right 
adjoint functors of the inclusion functor $i_{n-1}$ respectively, 
whose existence follows from 
the existence of the right and left adjoint functors of $\Phi_{n-1}$. 
Specifically, $(i_{n-1}^{\ast}, i_{n-1}^{!})$ are constructed 
so that there are distinguished triangles
\begin{align*}
& \Psi_{n-1}\circ\Phi_{n-1}(E) \to E \to i_{n-1}^{\ast}E, \\
& i_{n-1}^{!}(E) \to E \to \Psi_{n-1}'\circ\Phi_{n-1}(E)
\end{align*}
for any $E\in D^{\dag}(X)$. 
We, therefore, obtain the new t-structure on $D^{\dag}(X)$:
\begin{align*}
^{p}\scD^{\le 0} &=\{ \scK \in D^{\dag}(X) \mid 
\Phi_{n-1}(\scK) \in D^b(A_{n-1})^{\le 0},\ i_{n-1}^{\ast}\scK \in 
\scC_{n-1}^{\le p}\}, \\
^{p}\scD^{\ge 0} &=\{ \scK \in D^{\dag}(X) \mid 
\Phi_{n-1}(\scK) \in D^b(A_{n-1})^{\ge 0},\ i_{n-1}^{!}\scK \in 
\scC_{n-1}^{\ge p}\}.
\end{align*}
Here, $p$ is an integer that determines the \emph{perversity} of the t-structure
and we denote 
$\scC_{n-1}^{\le p}=\scC_{n-1}\cap D(X)^{\le p}$ 
and 
$\scC_{n-1}^{\ge p}=\scC_{n-1}\cap D(X)^{\ge p}$.
 The heart of the above t-structure
is called the category
of \emph{perverse coherent sheaves} (cf. \cite{Br1}):

$$
\Per{p}(X/A_{n-1})= 
\left\{ 
\scK \in D^{\dag}(X) \
\begin{array}{|l}
\Phi_{n-1}(\scK)\in \module A_{n-1} \mbox{ and } \\
i_{n-1}^{\ast}\scK \in \scC_{n-1}^{\le p},\
i_{n-1}^{!}\scK \in \scC_{n-1}^{\ge p}
\end{array} 
\right\}.
$$
\begin{rem}\label{rem:notsoeasy}
Note that 
since the functor 
$\Phi_{n-1}\colon D^b(X)\to D^b(A_{n-1})$ does not necessarily have 
a right adjoint functor, we cannot construct 
the perverse t-structure on $D^b(X)$ in a similar way. 
However we will see in \S \ref{section:heart} that  
$\Per0(X/A_{n-1})$ is in fact the heart of a bounded 
t-structure on $D^b(X)$. 
\end{rem}

\begin{rem}
The condition
$i_{n-1}^{\ast}\scK \in \scC_{n-1}^{\le p}$
(resp.~
$i_{n-1}^{!}\scK \in \scC_{n-1}^{\ge p}$) 
is equivalent to the condition
\begin{align}\label{eq:equiv}
\Hom_X(\scK, C)=0 \quad (\mbox{ resp. }\Hom_X(C, \scK)=0)
\end{align}
for any $C\in \scC_{n-1}^{\ge p+1}$ (resp.~$C\in \scC_{n-1}^{\le p-1}$). 
If $\scK \in D^b(X)$, it is enough to check 
(\ref{eq:equiv}) for $C\in \scC_{n-1}\cap \Coh X[j]$
with $j<-p$ (resp.~$j>-p$).
\end{rem}

\begin{cla}\label{cla:N_n0}
The object 
$\scN_{n-1}$ belongs to $\Per0 (X/A_{n-1})$.
\end{cla}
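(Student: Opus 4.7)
The plan is to verify in turn the three defining conditions for $\scN_{n-1}\in \Per{0}(X/A_{n-1})$: that $\Phi_{n-1}(\scN_{n-1})$ lies in $\module A_{n-1}$, that $i_{n-1}^{\ast}\scN_{n-1}\in \scC_{n-1}^{\le 0}$, and that $i_{n-1}^{!}\scN_{n-1}\in \scC_{n-1}^{\ge 0}$. The first is formal: applying $\Phi_{n-1}$ to the triangle (\ref{eqn:triangle}) and using $\Phi_{n-1}\circ\Psi_{n-1}\cong \id$ yields $\Phi_{n-1}(\scN_{n-1})\cong \sigma_{<1}(P_{n-1})$, exactly as in Step \ref{step:construction}. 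Because $\scH^i(P_{n-1})=\Ext^i_X(\scE_{n-1},\scL^{-n})$ vanishes for $i<0$, the brutal truncation $\sigma_{<1}(P_{n-1})$ has cohomology concentrated in degree $0$, namely the finitely generated module $\Coker(P_{n-1}^{-1}\to P_{n-1}^0)$.

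For the $i_{n-1}^{!}$-condition I plan to use the Hom characterisation recalled in the remark just before the claim, reducing the problem to checking $\Hom_X(F[j],\scN_{n-1})=0$ for every $F\in \scC_{n-1}\cap \Coh X$ and every $j>0$. First I would observe that $\scN_{n-1}\in D(X)^{\ge 0}$: since $\Psi_{n-1}(\sigma_{\ge 1}(P_{n-1}))=\sigma_{\ge 1}(P_{n-1})\otimes_{A_{n-1}}\scE_{n-1}$ has zero terms in degrees $\le 0$ and $\scL^{-n}$ sits in degree $0$, the long exact cohomology sequence of (\ref{eqn:triangle}) forces $\scH^i(\scN_{n-1})=0$ for $i<0$. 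The desired vanishing of $\Hom_X(F[j],\scN_{n-1})$ for $j>0$ then follows from the hypercohomology spectral sequence $\Ext^p_X(F,\scH^q(\scN_{n-1}))\Rightarrow \Ext^{p+q}_X(F,\scN_{n-1})$, which carries no term of total degree $-j<0$ because both $p\ge 0$ and $q\ge 0$.

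For the $i_{n-1}^{\ast}$-condition the same remark reduces us to showing $\Hom_X(\scN_{n-1}, F[j])=0$ for $F\in \scC_{n-1}\cap \Coh X$ and $j<0$. Here I would apply $\RHom_X(-,F)$ to (\ref{eqn:triangle}) and invoke the adjunction
$$\RHom_X(\Psi_{n-1}(M),F)\cong \RHom_{A_{n-1}}(M,\Phi_{n-1}(F)).$$
Since $F\in \scC_{n-1}$ precisely means $\Phi_{n-1}(F)=0$, the two terms involving $\Psi_{n-1}(\sigma_{\ge 1}(P_{n-1}))$ drop out, leaving a quasi-isomorphism $\RHom_X(\scN_{n-1},F)\cong \RHom_X(\scL^{-n},F)$. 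Lemma \ref{lem:generator} shows that $F$ satisfies the hypothesis of Lemma \ref{lem:properties}, which places $\RHom_X(\scL^{-n},F)$ in $R\module$ concentrated in degree $0$; in particular all $\Hom_X(\scN_{n-1},F[j])$ with $j\ne 0$ vanish, covering both signs of $j$.

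The step I expect to be the main obstacle is the $i_{n-1}^{\ast}$-verification, where the essential geometric input is the concentration-in-degree-$0$ statement of Lemma \ref{lem:properties}; converting $\scC_{n-1}$-membership into the precise vanishing hypothesis that Lemma invokes (via Lemma \ref{lem:generator}) is what makes the whole construction work. The other two conditions should drop out essentially mechanically from the shape of the triangle (\ref{eqn:triangle}) and from the fact that $P_{n-1}$ resolves a non-negatively supported object.
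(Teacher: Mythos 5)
Your proposal is correct and follows essentially the same route as the paper: it verifies the same three conditions, obtains $\Phi_{n-1}(\scN_{n-1})\cong\sigma_{\le 0}(P_{n-1})$ from the defining triangle, reduces $\RHom_X(\scN_{n-1},F)$ to $\RHom_X(\scL^{-n},F)$ via the vanishing of $\Phi_{n-1}$ on $\scC_{n-1}$, and uses that $\scN_{n-1}\in D(X)^{\ge 0}$ for the remaining condition. The only (harmless) deviation is that you invoke Lemma \ref{lem:properties} already here to get concentration in degree $0$, whereas the paper needs only the trivial negative-degree vanishing for this claim and defers Lemma \ref{lem:properties} to Claim \ref{cla:N_n}.
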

\begin{proof}
Since $\scN_{n-1}\in D^b(X)$, it is enough to check 
the following;
\begin{align}
\label{check1}& \Phi_{n-1}(\scN_{n-1}) \in \module A_{n-1}, \\
& \Hom_X^i(\scN_{n-1}, C)=0 \mbox{ for }i<0 \mbox{ and }
\label{check2}C\in \scC_{n-1}\cap \Coh X,\\
\label{check3}& \Hom_X^i(C, \scN_{n-1})=0 \mbox{ for }i<0 \mbox{ and }
C\in \scC_{n-1}\cap \Coh X. 
\end{align}
First let us check (\ref{check1}). By the triangle (\ref{eqn:triangle}), 
we have $\Phi_{n-1}(\scN_{n-1})\cong \sigma_{\le 0}P_{n-1}$,
hence 
$\scH^i(\Phi_{n-1}(\scN_{n-1}))=0$ for $i>0$. For $i<0$, we have 
\begin{align*}
\scH^{i}(\Phi_{n-1}(\scN_{n-1}))  
&\cong \scH^{i}(\Phi_{n-1}(\scL^{-n})) \\
&=0, 
\end{align*}
since $\scE_{n-1}$ and $\scL^{-n}$ are vector bundles on $X$. 
Therefore (\ref{check1}) holds.  
Next for $C\in \scC_{n-1} \cap \Coh X$,  
we have 
\begin{align}\label{eq:check2}
\Hom_X^i(\scN_{n-1}, C) \cong \Hom_X^i(\scL^{-n}, C)
\end{align}
for any $i$ by the triangle (\ref{eqn:triangle}).
 Therefore (\ref{check2}) follows.
Finally we check (\ref{check3}). Since $\scL^{-n}$ and 
$\Psi_{n-1}(\sigma_{\ge 1}(P_{n-1} )[1])$ belong to 
$D(X)^{\ge 0}$, we have 
\begin{align*}
\Hom_X ^i(C, \scL^{-n})
&\cong \Hom_X ^i(C, \Psi_{n-1}(\sigma_{\ge 1}(P_{n-1} )[1])) \\
&=0
\end{align*}
for $i<0$ and $C\in\scC_{n-1}\cap\Coh X$.
By the triangle (\ref{eqn:triangle}), 
(\ref{check3}) also follows. 
\end{proof}

\begin{cla}\label{cla:N_n}
For $i>0$ and $B\in\Per0 (X/A_{n-1})$, we have 
\begin{equation}\label{eqn:projective}
\Hom_X ^i(\scN_{n-1},B)=0.
\end{equation} 
In particular, $\scN_{n-1}$ is a projective object of $\Per0 (X/A_{n-1})$. 
\end{cla}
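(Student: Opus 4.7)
The plan is to reduce the claim to a vanishing statement about $\Hom_X^i(\scL^{-n}, B)$ via the triangle (\ref{eqn:triangle}), and then analyze that via the recollement triangle associated to the projection $i_{n-1}^{\ast}$. First I would apply $\RHom_X(-, B)$ to (\ref{eqn:triangle}) and examine the resulting long exact sequence. By the adjunction $\Psi_{n-1} \dashv \Phi_{n-1}$, each term $\Hom_X^j(\Psi_{n-1}(\sigma_{\ge 1}(P_{n-1})), B)$ is identified with $\Hom_{A_{n-1}}^j(\sigma_{\ge 1}(P_{n-1}), \Phi_{n-1}(B))$. Since $B \in \Per0(X/A_{n-1})$ forces $\Phi_{n-1}(B) \in \module A_{n-1}$ to sit in cohomological degree $0$, and $\sigma_{\ge 1}(P_{n-1})$ is a bounded complex of free $A_{n-1}$-modules concentrated in degrees $\ge 1$, the chain-level Hom-complex vanishes in all degrees $\ge 0$. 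The long exact sequence therefore collapses to give $\Hom_X^i(\scN_{n-1}, B) \cong \Hom_X^i(\scL^{-n}, B)$ for $i \ge 1$.

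For the remaining vanishing $\Hom_X^i(\scL^{-n}, B) = 0$ ($i > 0$), I would use the recollement triangle $\Psi_{n-1}\Phi_{n-1}(B) \to B \to i_{n-1}^{\ast} B$ and apply $\RHom_X(\scL^{-n}, -)$. For the left term, take a free resolution of $\Phi_{n-1}(B) \in \module A_{n-1}$; tensoring with $\scE_{n-1}$ yields a representative of $\Psi_{n-1}\Phi_{n-1}(B)$ whose components are direct sums of $\scE_{n-1}$ sitting in cohomological degrees $\le 0$. The induction hypothesis (\ref{eqn2}) (with $l = n$, applied to $\scE_{n-1}$) gives $\Hom_X^j(\scL^{-n}, \scE_{n-1}) = 0$ for $j \ne 0$, so $\RHom_X(\scL^{-n}, \Psi_{n-1}\Phi_{n-1}(B))$ is computed termwise and remains in cohomological degrees $\le 0$; in particular it vanishes for $i > 0$.

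The main obstacle is the other term $\Hom_X^i(\scL^{-n}, i_{n-1}^{\ast} B)$, and this is precisely where Assumption \ref{assum:induction} combined with Lemma \ref{lem:properties} is indispensable. Since $i_{n-1}^{\ast} B$ belongs to $\scC_{n-1}^{\le 0}$, Assumption \ref{assum:induction} guarantees that each cohomology sheaf $\scH^j(i_{n-1}^{\ast} B)$ still lies in $\scC_{n-1}$; by Lemma \ref{lem:generator} this is equivalent to $\RHom_X(\bigoplus_{l=0}^{n-1}\scL^{-l}, \scH^j(i_{n-1}^{\ast} B)) = 0$. Lemma \ref{lem:properties} then yields $\Ext_X^p(\scL^{-n}, \scH^j(i_{n-1}^{\ast} B)) = 0$ for $p \ne 0$. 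Feeding this into the spectral sequence $E_2^{p,q} = \Ext_X^p(\scL^{-n}, \scH^q(i_{n-1}^{\ast} B)) \Rightarrow \Ext_X^{p+q}(\scL^{-n}, i_{n-1}^{\ast} B)$ makes it collapse to $\Ext_X^i(\scL^{-n}, i_{n-1}^{\ast} B) \cong \Hom_X(\scL^{-n}, \scH^i(i_{n-1}^{\ast} B))$, which vanishes for $i > 0$ because $\scH^i(i_{n-1}^{\ast} B) = 0$ whenever $i > 0$. The projectivity of $\scN_{n-1}$ in $\Per0(X/A_{n-1})$ is then immediate, as $\Ext^1$ in the heart of a t-structure injects into $\Hom_X^1$ in the derived category.
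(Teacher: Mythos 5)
Your proof is correct and follows essentially the same route as the paper's: both arguments combine the defining triangle (\ref{eqn:triangle}) of $\scN_{n-1}$ with the recollement triangle $\Psi_{n-1}\Phi_{n-1}(B)\to B\to i_{n-1}^{\ast}B$, handling one piece by adjunction and the projectivity of the terms of $\sigma_{\ge 1}(P_{n-1})$, another via Claim~\ref{cla:tilting2} for $\Hom_X(\scL^{-n},\scE_{n-1})$, and the piece $i_{n-1}^{\ast}B$ via Assumption~\ref{assum:induction} together with Lemma~\ref{lem:properties}. The only differences are that you apply the two triangles in the opposite order (first reducing $\scN_{n-1}$ to $\scL^{-n}$, then decomposing $B$) and that you make explicit the d\'evissage and spectral-sequence step that the paper leaves implicit.
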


\begin{proof}
We have a triangle
\begin{align}\label{tri2}
\Psi_{n-1}\circ\Phi_{n-1}(B)\to B\to i_{n-1}^{\ast}B.
\end{align}
By the definition of $\Per0(X/A_{n-1})$, we have 
$i_{n-1}^{\ast}B\in\scC_{n-1}^{\le 0}$. 
To see (\ref{eqn:projective}),
it suffices to show
%
%
\begin{equation}\label{eqn:projective2}
\Hom_X ^i(\scN_{n-1}, i_{n-1}^{\ast}B)=0
\end{equation}
and
\begin{equation}\label{eqn:projective2'}
\Hom_X ^i(\scN_{n-1}, \Psi_{n-1}\circ\Phi_{n-1}(B))=0
\end{equation}
for $i>0$. To prove (\ref{eqn:projective2}),
it is enough to show 
$$
\Hom_X^i(\scN_{n-1}, C)=0
$$
for any $i>0$ and 
$C\in\scC_{n-1}\cap\Coh X$.
Then the assertion follows from (\ref{eq:check2}) and
  Lemma~\ref{lem:properties}.
 
Next let us show (\ref{eqn:projective2'}). 
By the triangle (\ref{eqn:triangle}), it is 
enough to check the following;
\begin{align}
\label{check4}
& \Hom_X^i(\Psi_{n-1}(\sigma_{\ge 1}(P_{n-1})[1]), 
 \Psi_{n-1}\circ\Phi_{n-1}(B)) =0, \\
 \label{check5}
 & \Hom_X^i(\scL^{-n},\Psi_{n-1}\circ\Phi_{n-1}(B))=0
\end{align}
for $i>0$. Note that
\begin{align}\label{eqn:projective3}
(\ref{check4})\cong 
  \Hom_{A_{n-1}}^i(\sigma_{\ge 1}(P_{n-1})[1], \Phi_{n-1}(B)).
 \end{align}
Since $\Phi_{n-1} (B)\in \module A_{n-1}$,
$\sigma_{\ge 1}(P_{n-1} )[1]\in D^b(A_{n-1})^{\ge 0}$
and each term of $\sigma_{\ge 1}(P_{n-1} )[1]$ is a projective
$A_{n-1}$-module, 
we conclude $(\ref{eqn:projective3})=0$. 
In order to check (\ref{check5}), let us
 take a free $A_{n-1}$ resolution $\scQ =(\cdots \to \scQ^{-1}
 \to \scQ^0 \to 0)$ of
an $A_{n-1}$-module $\Phi_{n-1} (B)$.
 Then each term of $\Psi_{n-1} (\scQ) $ 
is a direct sum of $\scE_{n-1}$.
Hence by Claim~\ref{cla:tilting2}, 
 we conclude (\ref{check5}) holds.
\end{proof}

We readily see that $\scE_{n-1}\in \Per0 (X/A_{n-1})$, 
and therefore we have $\scE_n\in\Per0 (X/A_{n-1})$.

\begin{cla}
$\scE_n$ is a tilting object.
\end{cla}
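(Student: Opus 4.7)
The plan is to decompose $\Hom_X^i(\scE_n,\scE_n)$ according to $\scE_n=\scE_{n-1}\oplus\scN_{n-1}$ and verify the vanishing of each of the four summands for $i\ne 0$. The crucial observation is that both summands already lie in the heart $\Per0(X/A_{n-1})$: for $\scN_{n-1}$ this is Claim~\ref{cla:N_n0}, and for $\scE_{n-1}$ it is the remark recorded immediately before the present claim. Since $\Per0(X/A_{n-1})$ is the heart of the glued t-structure on $D^{\dag}(X)$ and the embedding $D^{\dag}(X)\hookrightarrow D(X)$ is fully faithful, the orthogonality axiom of a t-structure gives $\Hom_X^i(A,B)=0$ for all $A,B\in\Per0(X/A_{n-1})$ and all $i<0$. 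This handles the entire negative range in one stroke.

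For $i>0$ I would treat each summand separately. The diagonal piece $\Hom_X^i(\scE_{n-1},\scE_{n-1})$ vanishes because $\scE_{n-1}$ is itself tilting by the inductive construction of \S\ref{inductive}. For $\Hom_X^i(\scE_{n-1},\scN_{n-1}) = \scH^i\Phi_{n-1}(\scN_{n-1})$, the identification $\Phi_{n-1}(\scN_{n-1})\cong\sigma_{\le 0}(P_{n-1})$ recorded inside the proof of Claim~\ref{cla:N_n0} forces the cohomology to vanish in positive degrees. The two remaining summands $\Hom_X^i(\scN_{n-1},\scE_{n-1})$ and $\Hom_X^i(\scN_{n-1},\scN_{n-1})$ fall directly under Claim~\ref{cla:N_n}: since that claim says $\scN_{n-1}$ is projective in $\Per0(X/A_{n-1})$, and both $\scE_{n-1}$ and $\scN_{n-1}$ are objects of $\Per0(X/A_{n-1})$, the vanishing follows by taking $B=\scE_{n-1}$ and $B=\scN_{n-1}$ respectively.

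The substantive work has already been carried out in Claims~\ref{cla:N_n0} and~\ref{cla:N_n} together with the gluing construction of the perverse t-structure, so the present claim is essentially a bookkeeping consequence. The one point worth underlining is that the negative-degree vanishings are absorbed into the t-structure formalism rather than being attacked directly: a head-on computation via (\ref{eqn:triangle}) and the adjunction $(\Psi_{n-1},\Phi_{n-1})$ would be awkward because the free resolution $P_{n-1}$ can now have a nontrivial tail in positive degrees, precisely the phenomenon that prevented the method of \S\ref{inductive} from producing $\scE_n$ directly (cf.~Remark~\ref{rem:process}).
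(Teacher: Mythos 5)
Your proof is correct and follows essentially the same route as the paper: the paper likewise splits $\Hom_X^i(\scE_n,\scE_n)$ into the four summands, using (\ref{check1}) for $\Hom_X^i(\scE_{n-1},\scN_{n-1})$, Claim~\ref{cla:N_n} for the Homs out of $\scN_{n-1}$, and the tilting property of $\scE_{n-1}$ for the remaining diagonal piece. Your explicit appeal to the t-structure orthogonality for the negative-degree vanishing is if anything slightly more careful than the paper's terse citation of Claim~\ref{cla:N_n} ``for all $i\ne 0$'', which literally only covers $i>0$.
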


\begin{proof}
(\ref{check1}) yields 
\begin{align*}
\Hom_X^i(\scE_{n-1},\scN_{n-1})=0
\end{align*}
for all $i\ne 0$.
Also Claim \ref{cla:N_n} implies that
\begin{align*}
\Hom_X^i(\scN_{n-1},\scN_{n-1})&\cong\Hom_X^i(\scN_{n-1},\scE_{n-1})\\
&=0
\end{align*}
 for all $i\ne 0$.
Moreover recalling that $\scE_{n-1}$ is a tilting vector bundle,
we have
$\Hom_X^i(\scE_{n-1},\scE_{n-1})$
vanishes for $i\ne 0$. 
Combining these equalities, 
we see that $\scE_n=\scE_{n-1}\oplus \scN_{n-1}$ is a tilting object in $D^b(X)$.
\end{proof}

\begin{cla}
$\scE_n$ is a vector bundle.
\end{cla}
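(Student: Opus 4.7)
The plan is to reduce the claim to showing that the new summand $\scN_{n-1}$ is a vector bundle, since $\scE_{n-1}$ is a vector bundle by the induction of \S\ref{inductive} and $\scE_n = \scE_{n-1} \oplus \scN_{n-1}$. I will carry this out in two stages: first show that $\scN_{n-1}$ is concentrated in degree $0$ as a complex, and then show the resulting coherent sheaf is locally free.

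For the first stage, observe that $\Psi_{n-1}(\sigma_{\ge 1}(P_{n-1}))$ lies in $D(X)^{\ge 1}$, since $\sigma_{\ge 1}(P_{n-1})$ is a complex of free $A_{n-1}$-modules in degrees $\ge 1$ and $\Psi_{n-1}$ merely tensors term-wise with the vector bundle $\scE_{n-1}$. The long exact cohomology sequence coming from the defining triangle (\ref{eqn:triangle}) then already gives $\scH^i(\scN_{n-1})=0$ for $i<0$. To obtain vanishing in positive degrees, I use the recollement triangle
\[
\Psi_{n-1}\circ\Phi_{n-1}(\scN_{n-1}) \to \scN_{n-1} \to i_{n-1}^{\ast}\scN_{n-1}
\]
and show both outer terms lie in $D(X)^{\le 0}$. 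Since $\scN_{n-1}\in \Per0(X/A_{n-1})$ by Claim~\ref{cla:N_n0}, the right-hand term lies in $\scC_{n-1}^{\le 0}\subset D(X)^{\le 0}$ by definition. For the left-hand term, Claim~\ref{cla:N_n0} also tells us that $M:=\Phi_{n-1}(\scN_{n-1})$ is a genuine module, and $\sigma_{\le 0}(P_{n-1})$ provides a free $A_{n-1}$-resolution of $M$ sitting in degrees $\le 0$; hence $\Psi_{n-1}(M)$ is represented by the plain tensor product $\sigma_{\le 0}(P_{n-1})\otimes_{A_{n-1}}\scE_{n-1}$, a complex of direct sums of $\scE_{n-1}$ in degrees $\le 0$. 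Combining both estimates yields $\scN_{n-1}\in\Coh X$.

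Once $\scN_{n-1}$ is a coherent sheaf, the long exact sequence forces $F:=\Psi_{n-1}(\sigma_{\ge 1}(P_{n-1}))$ to have cohomology concentrated in degree $1$, and yields a short exact sequence
\[
0 \to \scL^{-n} \to \scN_{n-1} \to \scH^1(F) \to 0.
\]
Since the terms $F^1,\dots,F^n$ of $F$ are direct sums of $\scE_{n-1}$ and the only nonzero cohomology sheaf sits in degree $1$, there is an exact sequence $0\to \scH^1(F)\to F^1\to F^2\to\cdots\to F^n\to 0$ of coherent sheaves. A standard dimension-shifting argument from the right, using that the kernel of a surjection between locally free coherent sheaves on a Noetherian scheme is again locally free (checked stalk-wise, since finitely generated projective modules over local rings are free), forces $\scH^1(F)$ to be locally free. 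Then $\scN_{n-1}$ is an extension of two locally free sheaves, hence locally free itself.

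The substantive step is the first stage, and concretely the claim $\Psi_{n-1}(M)\in D(X)^{\le 0}$. This hinges crucially on the fact, already established in the proof of Claim~\ref{cla:N_n0}, that $M$ is an honest $A_{n-1}$-module rather than merely a cohomologically bounded complex; only then do we have access to the explicit bounded-above free resolution $\sigma_{\le 0}(P_{n-1})$ of $M$ sitting in non-positive degrees. Without such control, the derived tensor product could in principle acquire cohomology in positive degrees and the above argument would break down.
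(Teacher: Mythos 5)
Your proof is correct, but it takes a genuinely different route from the paper's. The paper reduces to $\scN_{n-1}$ and then applies the point-wise criterion of Lemma~\ref{lemma:vector}: it first notes that $\Phi_{n-1}(\scO_x)\in \module A_{n-1}$ for every closed point $x$ (so that $\scO_x\in\Per0(X/A_{n-1})$), and then invokes the projectivity statement of Claim~\ref{cla:N_n} to get $\RHom_X(\scN_{n-1},\scO_x)\in R\module$, which by Lemma~\ref{lemma:vector} again forces $\scN_{n-1}$ to be a vector bundle. You instead bypass both Lemma~\ref{lemma:vector} and Claim~\ref{cla:N_n}: you sandwich $\scN_{n-1}$ between $D(X)^{\ge 0}$ (from the defining triangle (\ref{eqn:triangle})) and $D(X)^{\le 0}$ (from the recollement triangle, using that $\Phi_{n-1}(\scN_{n-1})$ is an honest module resolved by $\sigma_{\le 0}(P_{n-1})$ in non-positive degrees, together with $i_{n-1}^{\ast}\scN_{n-1}\in\scC_{n-1}^{\le 0}$ from Claim~\ref{cla:N_n0}), and then upgrade coherence to local freeness via the resulting short exact sequence $0\to\scL^{-n}\to\scN_{n-1}\to\scH^1(F)\to 0$ and a descending syzygy argument on the exact complex $0\to\scH^1(F)\to F^1\to\cdots\to F^n\to 0$ of direct sums of $\scE_{n-1}$. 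The paper's argument is shorter given that Claim~\ref{cla:N_n} is already established for other purposes; yours uses less of the perverse-heart machinery (only Claim~\ref{cla:N_n0}) and has the added benefit of producing an explicit locally split presentation of $\scN_{n-1}$ as an extension of a vector bundle by $\scL^{-n}$, the analogue of (\ref{eqn:short}) for $k=n$. Both arguments rely on the same essential input, namely that $\Phi_{n-1}(\scN_{n-1})$ is concentrated in degree zero, which you correctly identify as the crux.
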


\begin{proof}
It is enough to show that $\scN_{n-1}$ is a vector bundle.
By Lemma \ref{lemma:vector}, we know $\Phi_{n-1}(\scO_x) \in \module A_{n-1}$
for any closed points $x\in X$, which implies 
$\scO_x \in \Per0 (X/A_{n-1})$. Hence it follows from 
Claim \ref{cla:N_n} that 
$\RHom_X(\scN_{n-1},\scO_x)\in R\module$, and in particular $\scN_{n-1}$ 
is a vector bundle by Lemma \ref{lemma:vector}.
\end{proof}

\begin{lem}\emph{\bf{\cite[Lemma 4.3]{Br2}}}\label{lemma:vector}
For a Noetherian scheme $X$ and an object $\scE\in D^b(X)$,
the following are equivalent.
\begin{enumerate}
\item
$\scE$ is a vector bundle.
\item
$\Hom_X^i(\scE, \scO_x)=0$ for any points $x\in X$ and $i\ne 0$. 
\end{enumerate}
\end{lem}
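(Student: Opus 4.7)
The forward direction (i) $\Rightarrow$ (ii) is an easy local computation ($\Ext^i$ out of a locally free sheaf into any coherent sheaf vanishes in nonzero degrees), so I focus on the harder direction (ii) $\Rightarrow$ (i). The plan is to reduce to a statement about bounded complexes over a Noetherian local ring, settle that via a minimal free resolution argument, and then transport the conclusion back globally.

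First, I would localize the hypothesis. Because $\scO_x$ is a skyscraper sheaf supported at $x$, each cohomology sheaf of $\scRHom_X(\scE, \scO_x)$ is supported at $x$, and therefore derived global sections agree with the stalk at $x$. This yields the natural identification
$$\Ext^i_X(\scE, \scO_x) \;\cong\; \Ext^i_{\scO_{X,x}}(\scE_x, k(x)),$$
where $k(x)$ denotes the residue field. Thus (ii) translates into: for every $x \in X$, the stalk $\scE_x \in D^b(\scO_{X,x})$ satisfies $\Ext^i_{\scO_{X,x}}(\scE_x, k(x)) = 0$ for every $i \ne 0$.

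Next I would prove the local claim: if $R$ is a Noetherian local ring with residue field $k$ and $M \in D^b(R)$ satisfies $\Ext^i_R(M, k) = 0$ for $i \ne 0$, then $M \cong R^{\oplus r}$ in $D(R)$ for some $r \ge 0$. The strategy is to build a \emph{minimal free resolution} $F^\bullet \to M$, i.e., a bounded-above complex of finitely generated free $R$-modules whose differentials all have matrix entries in the maximal ideal $\frakm$. Such a resolution exists for any object in $D^b(R)$, constructed inductively starting from the top non-vanishing cohomology. Minimality forces every differential in the dual complex $\Hom_R(F^\bullet, k)$ to vanish, so
$$\Ext^i_R(M, k) \;\cong\; \Hom_R(F^{-i}, k) \;\cong\; k^{\oplus \operatorname{rk} F^{-i}}.$$
The hypothesis then forces $F^j = 0$ for all $j \ne 0$, giving $M \cong F^0 \cong R^{\oplus \operatorname{rk} F^0}$.

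Applied stalkwise, the local claim gives $\scH^i(\scE)_x = 0$ for all $x \in X$ and all $i \ne 0$, so $\scE$ is concentrated in degree zero, and the coherent sheaf $\scH^0(\scE)$ has a free stalk at every point. Such a coherent sheaf is locally free of finite rank, i.e., a vector bundle. The main technical point is the local lemma---one must trust that a minimal free resolution exists for every object of $D^b(R)$ even though $M$ may have infinite projective dimension, and observe that the rank computation for $\Ext^\ast(M, k)$ from a minimal resolution forces the resolution to be concentrated in a single degree.
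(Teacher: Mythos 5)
Your argument is correct. Note that the paper does not actually prove this lemma---it is quoted from Bridgeland \cite[Lemma 4.3]{Br2}---so there is no in-paper proof to compare against; your route is the standard one and essentially the cited one: reduce to the statement that a bounded complex $M$ with finitely generated cohomology over a Noetherian local ring $(R,\frakm,k)$ satisfying $\Ext^i_R(M,k)=0$ for $i\ne 0$ is quasi-isomorphic to a finite free module, which follows from the existence of minimal free resolutions of complexes (your packaging, which treats the complex directly instead of first showing $\scE$ is a sheaf and then that the sheaf is locally free, is if anything slightly cleaner). Two small points of hygiene. First, in (i)$\Rightarrow$(ii) the vanishing of the local $\mathcal{E}xt$ sheaves out of a locally free $\scE$ is not by itself enough to kill the global $\Hom^i_X$; you also need that $\scO_x$ is pushed forward from $\Spec k(x)$ along an affine morphism and hence has no higher cohomology. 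Second, for a non-closed point $x$ the sheaf $\scO_x$ (and hence $\mathcal{H}om(\scE,\scO_x)$) is supported on all of $\overline{\{x\}}$, not just at $x$, so the identification $\Ext^i_X(\scE,\scO_x)\cong\Ext^i_{\scO_{X,x}}(\scE_x,k(x))$ is most safely obtained from the adjunction $\RHom_X(\scE,\bR i_{x*}k(x))\cong \RHom_{k(x)}(\bL i_x^{*}\scE,k(x))$ rather than from a support argument; with that adjustment your proof goes through for all points.
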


\NI
Combining the above argument and Lemma \ref{lem:generator},
we can prove the following theorem.

\begin{thm}\label{thm:main}
Let $f$ and $\scL$ be as in \S \ref{setting}. 
Assume that 
Assumption \ref{assum:induction} 
is satisfied. Then there is a vector bundle $\scE$ such that 
$\scE$ is a tilting generator of $D^-(X)$.
\end{thm}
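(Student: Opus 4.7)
The plan is to combine the inductive construction from \S\ref{inductive} with the perverse t-structure machinery of \S\ref{subsection:gluing}. First I would apply the inductive procedure of \S\ref{inductive} to produce a tilting vector bundle $\scE_{n-1}$ satisfying the hypotheses (\ref{eqn1}) and (\ref{eqn2}). Then, using the distinguished triangle (\ref{eqn:triangle}) defining $\scN_{n-1}$, I would set $\scE_n=\scE_{n-1}\oplus\scN_{n-1}$ as the candidate tilting generator. The remaining task is then to verify, in order, that $\scE_n$ is (a) tilting, (b) a vector bundle, and (c) a generator of $D^-(X)$.

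For (a), the obstacle flagged in Remark \ref{rem:process} is that the naive cohomological bookkeeping from \S\ref{inductive} breaks down, since the complex $P_{n-1}$ need not be concentrated in degrees $0,1$. Here Assumption \ref{assum:induction} is exactly what is needed: it ensures that $\scC_{n-1}$ is closed under the cohomology functors of the standard t-structure on $D(X)$, so one can glue the standard t-structure on $\scC_{n-1}$ with the standard t-structure on $D^b(A_{n-1})$ across the exact triple $\scC_{n-1}\to D^{\dag}(X)\to D^b(A_{n-1})$ to obtain the perverse heart $\Per0(X/A_{n-1})$. Claim \ref{cla:N_n0} places $\scN_{n-1}$ in this heart and Claim \ref{cla:N_n} makes it projective there; combined with the fact that $\scE_{n-1}\in\Per0(X/A_{n-1})$ is also tilting, this at once gives $\Hom^i_X(\scE_n,\scE_n)=0$ for all $i\ne 0$.

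For (b), I would apply Lemma \ref{lemma:vector}: one checks that any skyscraper $\scO_x$ lies in $\Per0(X/A_{n-1})$ (using that $\Phi_{n-1}(\scO_x)\in\module A_{n-1}$, itself a consequence of $\scO_x$ being a perfect object in the appropriate sense and of Lemma \ref{lemma:vector}). Then the projectivity of $\scN_{n-1}$ inside $\Per0(X/A_{n-1})$ yields $\Hom^i_X(\scN_{n-1},\scO_x)=0$ for all $i>0$, so Lemma \ref{lemma:vector} again gives that $\scN_{n-1}$, and hence $\scE_n$, is a vector bundle. For (c), generation is immediate from Lemma \ref{lem:generator} with $k=n$, combined with Lemma \ref{lem322} which asserts that $\bigoplus_{i=0}^n \scL^{-i}$ generates $D^-(X)$.

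The main obstacle is the step (a), and specifically the need to pass to the perverse heart in order to control $\Hom^i_X(\scN_{n-1},\scN_{n-1})$. Without Assumption \ref{assum:induction} we cannot even define the t-structure required to make sense of this projectivity argument, so isolating precisely this assumption and packaging the otherwise intractable $\Ext$-vanishing into a single categorical statement (projectivity in $\Per0(X/A_{n-1})$) is the conceptual heart of the proof; once this is in place, properties (b) and (c) follow essentially formally from the lemmas already established.
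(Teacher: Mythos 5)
Your proposal is correct and follows essentially the same route as the paper: the inductive construction of $\scE_{n-1}$, the definition of $\scE_n=\scE_{n-1}\oplus\scN_{n-1}$ via the triangle (\ref{eqn:triangle}), tilting via Claims \ref{cla:N_n0} and \ref{cla:N_n} in the glued heart $\Per0(X/A_{n-1})$, the vector bundle property via skyscrapers and Lemma \ref{lemma:vector}, and generation via Lemma \ref{lem:generator}. The only point left implicit is that the gluing also requires the right adjoint of $\Phi_{n-1}$ on $D^{\dag}(X)$ (Lemma \ref{lem:adjoint}), which is exactly why the paper restricts $Y$ as in \S\ref{setting}.
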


\begin{rem}
For $n=2$, we can show that Assumption \ref{assum:induction} 
is always satisfied in \S \ref{subsection:Main result}.
 Since we have proved that $\scN_1$ is a vector 
bundle, taking the cohomology of (\ref{eqn:triangle}) yields 
$\Ext_{X}^2(\scE_1, \scL^{-2})\otimes_{A_1}\scE_1=0$. 
As $\Ext_X^2(\scE_1, \scL^{-2})$ may be non-zero, 
this vanishing is not obvious. 
\end{rem}


\section{The hearts of t-structures}\label{section:heart}
Let $f$ and $\scL$ be as in \S \ref{setting}
and furthermore assume that 
Assumption \ref{assum:induction} holds. 
Below we use the same notation as in \S \ref{section: construction},
but we omit the index $n$, for instance $\scE=\scE_n$,
$A=A_{n}=\End_X(\scE_n)$, etc. 
Recall that the equivalence
$$
\Phi=\RHom_X(\scE,-)\colon D^-(X)\longrightarrow D^-(A)
$$
induces an equivalence between $D^b(X)$ and $D^b(A)$ 
by Lemma \ref{lemma:tilting}. 

The aim of this section is the following, which will 
not be used in any subsequent sections.
Recall that $\Per0(X/A_{n-1})$ 
is, by definition, the heart of the t-structure 
$({^0\scD^{\le 0}},{^0\scD^{\ge 0}})$ 
on $D^\dag(X)$.

\begin{prop}\label{prop:hearts[0]}
The abelian category $\Per0(X/A_{n-1})$ is the heart of a 
bounded
t-structure on $D^b(X)$, and 
$\Phi(\Per0  (X/A_{n-1}))=\module A$. 
\end{prop}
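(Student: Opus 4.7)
The plan is to transport the standard t-structure on $D^b(A)$ through the equivalence $\Phi\colon D^b(X) \xrightarrow{\sim} D^b(A)$ of Lemma \ref{lemma:tilting} to obtain a bounded t-structure on $D^b(X)$ whose heart is
$$ \scH := \Phi^{-1}(\module A) = \{K \in D^b(X) \mid \Hom_X^i(\scE, K) = 0 \text{ for all } i \ne 0\}, $$
and then to identify $\scH$ with $\Per0(X/A_{n-1})$. Both assertions of the proposition follow immediately from this identification.

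For the inclusion $\Per0 \subseteq \scH$, let $B \in \Per0$. Writing $\scE = \scE_{n-1} \oplus \scN_{n-1}$, the vanishing $\Hom_X^i(\scE_{n-1}, B) = 0$ for $i \ne 0$ is exactly the condition $\Phi_{n-1}(B) \in \module A_{n-1}$ built into the definition of $\Per0$. For $\Hom_X^i(\scN_{n-1}, B)$, the case $i > 0$ is Claim \ref{cla:N_n} (projectivity of $\scN_{n-1}$ in $\Per0$), and the case $i < 0$ follows from the standard t-structure fact that $\Hom^{<0}$ between two objects of a heart vanishes, applied to $\scN_{n-1} \in \Per0$ (Claim \ref{cla:N_n0}) and $B$. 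Therefore $\Phi(B) \in \module A \subset D^b(A)$. To obtain $B \in D^b(X)$, the adjunction triangle $\Psi_{n-1}\Phi_{n-1}(B) \to B \to i_{n-1}^{\ast} B$ combined with $\Psi_{n-1}\Phi_{n-1}(B), i_{n-1}^{\ast} B \in D^{\le 0}(X)$ forces $B \in D^-(X)$; the equivalence $\Phi\colon D^-(X) \xrightarrow{\sim} D^-(A)$ of Lemma \ref{lemma:tilting} then gives $B \cong \Psi(\Phi(B)) \in \Psi(D^b(A)) = D^b(X)$.

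For the reverse inclusion $\scH \subseteq \Per0$, let $K \in \scH$. Again $\Phi_{n-1}(K) \in \module A_{n-1}$ follows from $\scE_{n-1}$ being a summand of $\scE$. By the remark preceding Claim \ref{cla:N_n0}, the remaining conditions $i_{n-1}^{\ast} K \in \scC_{n-1}^{\le 0}$ and $i_{n-1}^{!} K \in \scC_{n-1}^{\ge 0}$ reduce to showing $\Hom_X(K, C_0[j]) = 0$ for $j < 0$ and $\Hom_X(C_0[j], K) = 0$ for $j > 0$ whenever $C_0 \in \scC_{n-1} \cap \Coh X$. Using the $\Phi$-equivalence these become $\Ext^j_A(\Phi(K), \Phi(C_0))$ and $\Ext^{-j}_A(\Phi(C_0), \Phi(K))$, which vanish for $j < 0$ and $j > 0$ respectively once we know $\Phi(C_0) \in \module A$. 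This last fact is the main obstacle: since $\Phi_{n-1}(C_0) = 0$, the triangle (\ref{eqn:triangle}) yields $\Phi(C_0) \cong \RHom_X(\scN_{n-1}, C_0) \cong \RHom_X(\scL^{-n}, C_0)$, and Lemma \ref{lem:properties}—which is exactly where the $n$-dimensional fiber hypothesis is used—ensures this is concentrated in degree $0$.
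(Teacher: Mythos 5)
Your proof is correct, and its second half takes a genuinely different route from the paper's. The first half coincides with the paper's argument: both establish $\Per0(X/A_{n-1})\subset D^b(X)$ via the adjunction triangle $\Psi_{n-1}\circ\Phi_{n-1}(B)\to B\to i_{n-1}^{\ast}B$ followed by the restriction of $\Phi$ to bounded categories, and both deduce $\Phi(\Per0(X/A_{n-1}))\subset\module A$ from Claims \ref{cla:N_n0} and \ref{cla:N_n} (the paper leaves the $i<0$ vanishing of $\Hom_X^i(\scN_{n-1},B)$ implicit; your appeal to $\Hom^{<0}=0$ between objects of the heart is the right way to fill it in). Where you diverge is the opposite inclusion. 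The paper never verifies the perversity conditions for a general object of $\Phi^{-1}(\module A)$; instead it shows that the glued t-structure restricts to a \emph{bounded} t-structure on $D^b(X)$ (the computation $\tau^0_{\ge i}(E)=0$ for $i\gg 0$ via the compatibility $\Phi_{n-1}(\tau^0_{\ge i}(E))\cong\tau^A_{\ge i}(\Phi_{n-1}(E))$) and then invokes the abstract fact that two nested hearts of bounded t-structures on $D^b(A)$ must coincide. You instead check $i_{n-1}^{\ast}K\in\scC_{n-1}^{\le 0}$ and $i_{n-1}^{!}K\in\scC_{n-1}^{\ge 0}$ directly for $K\in\Phi^{-1}(\module A)$, reducing them via the Remark to $\Hom$-vanishing against objects of $\scC_{n-1}\cap\Coh X[j]$ and then to negative Ext groups between $A$-modules; the crucial input $\Phi(\scC_{n-1}\cap\Coh X)\subset\module A$ is exactly the parenthetical observation the paper records just before the proposition (Lemma \ref{lem:properties} together with (\ref{eq:check2})). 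Your route buys boundedness for free, since the transported t-structure is bounded by construction, at the price of a hands-on verification of the perversity conditions; the paper's route avoids that verification at the price of the truncation-functor computation. Both are sound, and both ultimately rest on the same use of Lemma \ref{lem:properties}.
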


\begin{proof}
We first show that $\Per0(X/A_{n-1})\subset D^b(X)$.
For an object $E\in {^0\scD^{\le 0}}$, we have the
distinguished triangle in $D^{\dag}(X)$ 
\begin{align}\label{tri3}
\Psi_{n-1}\circ\Phi_{n-1}(E) \to E \to 
i_{n-1}^{\ast}(E). 
\end{align}
By the definition of ${^0\scD^{\le 0}}$, 
we have $\Phi_{n-1}(E) \in D^b(A_{n-1})^{\le 0}$ 
and $i_{n-1}^{\ast}(E) \in \scC_{n-1}^{\le 0}$. 
Therefore $\Psi_{n-1}\circ\Phi_{n-1}(E)$ and $i_{n-1}^{\ast}(E)$
are objects in $D(X)^{\le 0}$, hence
(\ref{tri3}) yields $E\in D(X)^{\le 0}$. 
 In particular, we have $\Per0(X/A_{n-1}) \subset D^{-}(X)$. 
On the other hand, 
 Claim~\ref{cla:N_n} implies that the equivalence 
$\Phi\colon D^{-}(X) \to D^{-}(A)$ 
takes $\Per0(X/A_{n-1})$ to $\module A$. 
Since $\Phi$ restricts to an equivalence between $D^b(X)$ 
and $D^b(A)$, we must have $\Per0(X/A_{n-1})\subset D^b(X)$. 

Let $(\tau_{\le 0}^{0}, \tau_{\ge 0}^{0})$ be the truncation 
functors corresponding to the t-structure 
$({^0\scD^{\le 0}}, {^0\scD^{\ge 0}})$.
In order to conclude that $\Per0(X/A_{n-1})$ is the heart of 
a bounded t-structure of $D^b(X)$, it is enough to show that
for any object $E\in D^b(X)$, we have 
$\tau_{\le -i}^{0}(E)=\tau_{\ge i}^{0}(E)=0$ for $i\gg 0$.
 
Since the functor $\Phi_{n-1}\colon D^{\dag}(X) \to D^b(A_{n-1})$
 takes $({^0\scD^{\le 0}}, {^0\scD^{\ge 0}})$
to $(D^b(A_{n-1})^{\le 0}, D^b(A_{n-1})^{\ge 0})$, 
we have 
\begin{align}\label{eq:tau}
\Phi_{n-1}(\tau_{\ge i}^{0}(E))\cong \tau_{\ge i}^{A}(\Phi_{n-1}(E)),
\end{align}
where $(\tau_{\le 0}^{A}, \tau_{\ge 0}^{A})$
are the truncation functors with respect to the standard 
t-structure on $D^b(A_{n-1})$. 
Since $\Phi_{n-1}(E) \in D^b(A_{n-1})$, we have 
$(\ref{eq:tau})=0$ for $i\gg 0$. 
Therefore $\tau_{\ge i}^{0}(E) \in \scC_{n-1}^{\ge i} \subset D(X)^{\ge i}$. 
On the other hand, since $E\in D^b(X)$, we have 
$\Hom(E, F)=0$ for $F\in D(X)^{\ge i}$ for $i \gg 0$. 
Therefore the natural morphism $E\to \tau_{\ge i}^0(E)$ is 
zero, which implies $\tau_{\ge i}^0(E)=0$ for $i\gg 0$. 
By a similar argument, we have 
$\tau_{\le -i}^0(E)=0$ for $i\gg 0$. 

Since both of $\Phi(\Per0(X/A_{n-1}))$ and $\module A$ are  
the hearts of bounded t-structures on $D^b(A)$, and 
we also know $\Phi(\Per0(X/A_{n-1}))\subset \module A$, 
we obtain
 $$\Phi(\Per0(X/A_{n-1}))=\module A.$$ 

\end{proof}

Assume furthermore that the equality (\ref{eq:assume}) 
holds. Then Remark \ref{rem:easycase} implies that 
$\scE$ and $\scE^\vee$ are tilting generators of $D^-(X)$.  
We define the functor
$$
\Phi_k^\vee=\RHom_X(\scE_k^\vee,-)\colon D(X)\longrightarrow D(A_k^\circ),
$$
and then $\Phi^\vee=\Phi_{n}^\vee$
gives an equivalence between $D^b(X)$ and $D^b(A^\circ)$.
Here, we identify $D(A_k^\circ)$ with $D(\End_X(\scE_k^\vee))$,
using the isomorphism  $A_k^\circ\cong \End_X(\scE_k^\vee)$. 

Define the full subcategories of 
the unbounded derived category $D(X)$ as
\begin{align*}
\scC_{n-1}^\vee &=\{ \scK \in D(X) \mid \Phi_{n-1}^\vee(\scK) =0\}\\
D^{\dag\dag}(X)&=\left\{ \scK \in D(X) \bigm|
\Phi_{n-1}^\vee(\scK)\in D^b( A_{n-1}^\circ) \right\}.
\end{align*}
It is easy to see from 
Lemma \ref{lem:generator} that 
for an object $\scK\in D(X)$, 
$\scK$ belongs to $\scC^\vee_{n-1}$ 
if and only if 
$\scK\otimes \scL^{\otimes -n+1}$ belongs to $\scC_{n-1}$.
Therefore we can check that 
$\scK\in\scC_{n-1}^\vee$ implies that $\scH^k(\scK)\in\scC_{n-1}^\vee$ 
for all $k$ by Assumption \ref{assum:induction}, and hence
by the exact triple of triangulated categories 
$$
\scC_{n-1}^\vee{\to} D^{\dag\dag}(X)\stackrel{\Phi_{n-1}^\vee}{\to} D^b(A_{n-1}^\circ),
$$ 
we can define the category  
of perverse coherent sheaves 
$\Per{p}(X/A_{n-1}^\circ)$
as $\Per{p}(X/A_{n-1})$.

Note that (\ref{eqn:shortk}) yields 
$
\RHom_X (\scN_{n-1}^\vee,C)=\RHom_X (\scL^{n},C)
$
for $C\in \scC_{n-1}^\vee$,
hence Lemma \ref{lem:properties} implies 
$$
\RHom_X (\scN^{\vee}_{n-1},C)\in R\module 
$$
for $C\in \scC_{n-1}^\vee\cap \Coh X[n]$.
In particular, we see 
\begin{equation}\label{eqn:coh2}
\Phi^\vee(\scC_{n-1}^\vee\cap \Coh X[n])\subset \module A^\circ,
\end{equation}
The proof of the next proposition uses this fact.
 (By comparison,
$$
\Phi (\scC_{n-1}\cap \Coh X)\subset \module A
$$
holds by Lemma \ref{lem:properties} and (\ref{eq:check2}).
The proof of Claims \ref{cla:N_n0}
and \ref{cla:N_n} relies on this fact.)

\begin{prop}\label{prop:hearts[n]}
In the setting of Proposition \ref{prop:hearts[0]}, assume furthermore 
the equality (\ref{eq:assume}) holds.
Then the abelian category $\Per{-n}(X/A_{n-1}^\circ)$ is the heart of a 
bounded t-structure on $D^b(X)$, and 
$\Phi^\vee(\Per{-n} (X/A_{n-1}^\circ))=\module A^\circ$.
\end{prop}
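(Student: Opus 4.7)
The plan is to run the same argument as in the proof of Proposition \ref{prop:hearts[0]}, with perversity $0$ replaced by $-n$ and $\scE$ by $\scE^\vee$. The main new ingredient is (\ref{eqn:coh2}), which plays the role that Lemma \ref{lem:properties} and the triangle (\ref{eqn:triangle}) played in the original argument; the extra hypothesis (\ref{eq:assume}) is what ensures that $\scE^\vee$ is a bona fide tilting generator (Remark \ref{rem:easycase}(i)), so that the equivalence $\Phi^\vee\colon D^-(X) \isom D^-(A^\circ)$ of Lemma \ref{lemma:tilting} is available.

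First I would show $\Per{-n}(X/A_{n-1}^\circ) \subset D^-(X)$: for $E \in {}^{-n}\scD^{\le 0}$, the distinguished triangle
$$
\Psi^\vee_{n-1}\circ\Phi^\vee_{n-1}(E) \to E \to i_{n-1}^{\ast}(E)
$$
places both outer terms in $D(X)^{\le 0}$ (note $\scC_{n-1}^{\vee,\le -n}\subset D(X)^{\le -n}\subset D(X)^{\le 0}$), hence $E \in D(X)^{\le 0}$.

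The main obstacle is establishing $\Phi^\vee(\Per{-n}(X/A_{n-1}^\circ)) \subset \module A^\circ$; once this is done, the inclusion $\Per{-n}(X/A_{n-1}^\circ) \subset D^b(X)$ follows from the equivalence $\Phi^\vee\colon D^b(X) \isom D^b(A^\circ)$. For $B \in \Per{-n}(X/A_{n-1}^\circ)$, the summand $\scE_{n-1}^\vee$ of $\scE^\vee$ contributes $\Phi^\vee_{n-1}(B)$, already in $\module A_{n-1}^\circ$ by definition. The contribution of $\scN_{n-1}^\vee$ is the delicate part: dualizing (\ref{eqn:shortk}) at $k=n$ yields
$$
0 \to \scE_{n-1}^{\vee,\oplus r} \to \scN_{n-1}^\vee \to \scL^n \to 0,
$$
reducing the problem to showing $\RHom_X(\scL^n, B) \in R\module$. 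Applying $\RHom_X(\scL^n, -)$ to the triangle $\Psi^\vee_{n-1}\Phi^\vee_{n-1}(B) \to B \to i_{n-1}^{\ast}(B)$ splits the calculation into an $\scE_{n-1}^\vee$-part (handled via Claim \ref{cla:tilting1} together with (\ref{eq:assume})) and an $i_{n-1}^{\ast}(B)$-part. For the latter, Assumption \ref{assum:induction} forces each cohomology sheaf of $i_{n-1}^{\ast}(B)$ to lie in $\scC_{n-1}^\vee$, and the perversity condition $i_{n-1}^{\ast}(B) \in \scC_{n-1}^{\vee,\le -n}$ together with (\ref{eqn:coh2}) yields, via a spectral sequence, the required concentration in a single cohomological degree and the finite generation over $R$.

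The boundedness of the t-structure on $D^b(X)$ then follows exactly as in the last paragraph of the proof of Proposition \ref{prop:hearts[0]}: one uses that $\Phi^\vee_{n-1}$ intertwines the perverse truncations with the standard truncations on $D^b(A_{n-1}^\circ)$ modulo $\scC^\vee_{n-1}$, and argues that for $E \in D^b(X)$ the truncations $\tau^{-n}_{\ge i}(E), \tau^{-n}_{\le -i}(E)$ vanish for $|i| \gg 0$ using boundedness of $\Phi^\vee_{n-1}(E)$ and the vanishing of $\Hom$-groups into deeply shifted complexes. The equality $\Phi^\vee(\Per{-n}(X/A_{n-1}^\circ)) = \module A^\circ$ finally follows because $\Phi^\vee$ is a bounded equivalence of derived categories and both $\Phi^\vee(\Per{-n}(X/A_{n-1}^\circ))$ and $\module A^\circ$ are hearts of bounded t-structures on $D^b(A^\circ)$ with the former contained in the latter.
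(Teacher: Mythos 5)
Your overall architecture (containment in $D^-(X)$, then $\Phi^\vee(\Per{-n}(X/A_{n-1}^\circ))\subset \module A^\circ$, then boundedness and the comparison of hearts) matches the paper's, but the central step contains a genuine gap: the reduction of $\RHom_X(\scN_{n-1}^\vee,B)\in R\module$ to $\RHom_X(\scL^{n},B)\in R\module$ is invalid, because the intermediate statement you reduce to is in general false. The identification $\RHom_X(\scN_{n-1}^\vee,C)\cong\RHom_X(\scL^{n},C)$ holds only for $C\in\scC_{n-1}^\vee$, where the $\scE_{n-1}^\vee$-contribution in the sequence $0\to\scE_{n-1}^{\vee,\oplus r}\to\scN_{n-1}^\vee\to\scL^{n}\to 0$ dies; for a general $B\in\Per{-n}(X/A_{n-1}^\circ)$ the two differ by the contribution of $\Ext^{1}_X(\scL^{n},\scE_{n-1}^\vee)\cong\Ext^1_X(\scE_{n-1},\scL^{-n})$, which is typically nonzero --- indeed its nonvanishing is the entire reason the extension defining $\scN_{n-1}$ is taken. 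Concretely, $\scE_{n-1}^\vee$ itself lies in $\Per{-n}(X/A_{n-1}^\circ)$ (it is $\Psi_{n-1}^\vee$ of a projective module, so $i_{n-1}^{\ast}$ and $i_{n-1}^{!}$ of it vanish), yet $\Ext^1_X(\scL^{n},\scE_{n-1}^\vee)\neq 0$ in general, so $\RHom_X(\scL^{n},B)\notin R\module$ for this $B$. Your own treatment of the ``$\scE_{n-1}^\vee$-part'' betrays the problem: Claim \ref{cla:tilting1} plus (\ref{eq:assume}) only kills $\Ext^{\ge 2}_X(\scE_{n-1},\scL^{-n})$, leaving the $\Ext^1$ term unaccounted for. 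A secondary issue: even for the $i_{n-1}^{\ast}(B)$-part, the single triangle $\Psi_{n-1}^\vee\Phi_{n-1}^\vee(B)\to B\to i_{n-1}^{\ast}(B)$ together with (\ref{eqn:coh2}) can only give vanishing of $\Ext^{i}$ for $i>0$ (the degrees $n-j$ with $j\ge n$ are all $\le 0$); the vanishing for $i<0$ requires the other adjunction triangle or the fact that both objects lie in a common heart.

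The paper avoids this by never leaving $\scN_{n-1}^\vee$: it first shows $\scN_{n-1}^\vee\in\Per{-n}(X/A_{n-1}^\circ)$ (the analogue of Claim \ref{cla:N_n0}) and then $\Hom^i_X(\scN_{n-1}^\vee,B)=0$ for $i>0$ and $B$ in the heart (the analogue of Claim \ref{cla:N_n}), invoking the identification with $\RHom_X(\scL^{n},-)$ and Lemma \ref{lem:properties} --- i.e., the content of (\ref{eqn:coh2}) --- only when the target is in $\scC_{n-1}^\vee\cap\Coh X[n]$, which is exactly where that identification is legitimate; the vanishing of the negative $\Ext$'s then comes for free from both objects lying in the heart. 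If you restructure your middle step this way (proving the two claims for $\scN_{n-1}^\vee$ directly rather than for $\scL^{n}$), the rest of your argument goes through as written.
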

\begin{proof}
We outline the proof and leave the details to the reader.
First we show that
the object 
$\scN_{n-1}^\vee$ belongs to $\Per{-n} (X/A_{n-1}^\circ)$ as Claim \ref{cla:N_n0}.
In the proof, we use (\ref{eqn:coh2}).  

Next, we mimic the proof of Claim \ref{cla:N_n} and show 
\begin{equation*}
\Hom_X ^i(\scN_{n-1}^\vee,B)=0
\end{equation*} 
for $i>0$ and $B\in\Per{-n} (X/A_{n-1}^\circ)$.
We again use (\ref{eqn:coh2}) here.

From these facts, we can conclude
$$
\Phi^\vee(\Per{-n}(X/A_{n-1}^\circ))\subset \module A^\circ
$$ 
and then a similar argument to Proposition \ref{prop:hearts[0]}
works.
\end{proof}


\begin{exa}
In this example,
we show that tilting generators induce the derived equivalence 
between certain varieties connected by a Mukai flop.  
We also apply Propositions \ref{prop:hearts[0]} and \ref{prop:hearts[n]}.
 
Let $X$ be the cotangent bundle $T^* \bP ^n$ of the projective space $\bP ^n$
$(n\ge 2)$ and $g\colon Z \to X$ a blow-up along the zero section
of the projection $\pi\colon X \to \bP ^n$.
The exceptional locus $E(\subset Z)$ of $g$ 
is the incidence variety in $\bP^n \times (\bP^n)^{\vee}$, 
where $(\bP^n)^{\vee}$ is the dual projective space. 
By contracting curves contained in fibers of the projection 
$E\to (\bP^n)^{\vee}$, 
we obtain a birational contraction $g^{+}\colon 
Z \to X^{+}=T^* ((\bP ^n)^\vee)$. 
The resulting birational map 
$$
\phi =g^{+}\circ g^{-1}\colon X \dashrightarrow X ^+
$$
is so called a \emph{Mukai flop}. Put $R=\Spec H^0(X,\mathcal{O}_X)$.
Then we have a birational contraction 
$$
f\colon X\to Y=\Spec R
$$
which contracts only the zero section of $\pi$. In particular,
$f$ has at most $n$-dimensional fibers. 

We put $\scO_X(1)=\pi^* \scO_{\bP ^n}(1)$.
Then by direct calculations (refer to calculations 
in \S \ref{section:G(2,4)}) and Lemma \ref{lem322} 
we know that 
$
\scE=\bigoplus _{i=0}^n\scO_X(-i)
$
is a tilting generator of $D^-(X)$.
On the other hand,
we can see that (\ref{eq:assume}) holds for $\scL=\scO_X(1)$.
Apply the arguments in \S \ref{inductive} and 
Remark \ref{rem:easycase}; we obtain tilting vector bundles 
$\scE_k=\bigoplus _{i=0}^k\scO_X(-i)$ for all $k$ with $0\le k\le n$
(in other words, 
$r_k=0$ in (\ref{eqn:shortk}) for all $k$ with $0< k\le n$).
We can also check that 
Assumption \ref{assum:induction} holds. 
Therefore we can apply Propositions \ref{prop:hearts[0]} and \ref{prop:hearts[n]}.  

In what follows, we use the same notation as in the previous section,
and we also use the superscript $+$ to denote the corresponding object
on $X^+$ to the object on $X$. For instance,
$
\scE^+=\bigoplus _{i=0}^n\scO_{X^+}(-i).
$

Since $\phi$ is isomorphic in codimension one, there is an equivalence 
between categories of reflexive sheaves on $X$ and $X^+$. Hence,
we have a reflexive 
sheaf $\scE'$ on $X^+$ corresponding to $\scE$, satisfying 
$\End_X(\scE)\cong\End_{X^+}(\scE')$.
It is known that 
the corresponding reflexive sheaf on $X^+$ 
to $\scO_X(-1)$ is $\scO_{X^+}(1)$ (cf. \cite[Lemma 1.3]{Na1}, 
\cite[Lemma 2.3.1]{Na2}).
From these facts, 
we see that $\scE'\cong(\scE^+)^\vee$
and so we have 
an isomorphism of rings, denoted by $\phi_*$:
$$
\phi _*\colon A=\End_X(\scE)\cong\End_X(\scE')\cong 
\End_X((\scE^+)^\vee)\cong A^\circ.
$$
In particular, we have an equivalence
$D^b(A)\cong D^b(A^\circ)$ preserving 
the hearts of the standard t-structures. 
Compose this equivalence 
with equivalences given by tilting generators $\scE$ and $(\scE^+)^\vee$, 
and then we obtain an equivalence 
$$
\Xi \colon D^b(X)\to D^b(A)\to D^b(A^\circ)\to D^b(X^+),
$$
which satisfies
$\Xi(\Per{0} (X/A_{n-1}))=\Per{-n} (X^+/A_{n-1}^\circ)$ by Propositions 
\ref{prop:hearts[0]} and \ref{prop:hearts[n]}.  
Compare the results in \cite{Na1} and \cite[Corollary 5.7]{Kaw02}, where 
a similar derived equivalence is shown 
to exist by a very different method from ours.
\end{exa}


\section{The case of two-dimensional fibers}\label{section:2dimensional}


\subsection{Main result}\label{subsection:Main result}

Let $f \colon X\to Y=\Spec R$ be a projective morphism 
from a Noetherian scheme to an affine scheme 
of finite type over a field, 
or an affine scheme of a Noetherian complete local ring.
Suppose that the fibers of $f$ are
at most two-dimensional. 
Assume furthermore that $\bR f_*\scO_X=\scO_Y$ and 
there is an ample, globally generated line bundle
$\scL $ on $X$, satisfying $\bR ^2 f_{\ast}\scL ^{-1}=0$.
\NI
The following is a main theorem in this paper. 
\begin{thm}\label{thm:rel.dim2}
Under the above situation, 
there is a tilting vector bundle generating
the derived category $D^-(X)$.
\end{thm}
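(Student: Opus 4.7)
The plan is to verify the hypotheses of Theorem \ref{thm:main} in the case $n=2$ and invoke it. The setting of \S \ref{setting} is satisfied by the theorem's hypotheses: $\bR f_*\scO_X=\scO_Y$, fibers at most two-dimensional, $\scL$ ample and globally generated, and the vanishing condition (\ref{eqn:ample2}) for $n=2$ reducing to the single requirement $\bR^2 f_*\scL^{-1}=0$, which is hypothesized. The only remaining task is to verify Assumption \ref{assum:induction}.

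So let $\scK\in D(X)$ satisfy $\RHom_X(\scO_X\oplus\scL^{-1},\scK)=0$; since $Y=\Spec R$ is affine this is equivalent to $\bR f_*\scK=0$ and $\bR f_*(\scL\otimes\scK)=0$, and I need to deduce $\bR f_*\scH^k(\scK)=0$ and $\bR f_*(\scL\otimes\scH^k(\scK))=0$ for every $k$. The main tool is the hypercohomology spectral sequence
$$E_2^{p,q}=\bR^p f_*\scH^q(\scK)\Longrightarrow\bR^{p+q}f_*\scK=0.$$
Because the fibers have dimension at most two, $E_2^{p,q}$ is supported in the strip $0\le p\le 2$, so $d_r=0$ for $r\ge 3$ and $E_3=E_\infty$. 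The only potentially nonzero differential on $E_2$ is $d_2\colon E_2^{0,q+1}\to E_2^{2,q}$, and from $E_\infty^{p,q}=0$ one reads off that $\bR^1 f_*\scH^q(\scK)=0$ for every $q$ and that $d_2$ induces an isomorphism $f_*\scH^{q+1}(\scK)\isom\bR^2 f_*\scH^q(\scK)$ for every $q$. Running the same analysis on $\scL\otimes\scK$ yields the parallel statements for $\scL\otimes\scH^q(\scK)$.

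Assuming $\scK$ is bounded with extremal indices $b\le a$, the isomorphism at the endpoints yields $f_*\scH^b(\scK)\cong\bR^2 f_*\scH^{b-1}(\scK)=0$ and $\bR^2 f_*\scH^a(\scK)\cong f_*\scH^{a+1}(\scK)=0$, with parallel vanishings after $\scL\otimes-$. To propagate vanishing to the interior degrees $b<q<a$, I would take a general $H\in|\scL|$ (avoiding associated points of the $\scH^q(\scK)$) and apply $\bR f_*$ to the short exact sequence
$$0\to\scH^q(\scK)\to\scL\otimes\scH^q(\scK)\to(\scL\otimes\scH^q(\scK))|_H\to 0,$$
exploiting that $f|_H$ has fibers of dimension at most one, so that $\bR^2 f_*$ kills the last term. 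Threading the resulting long exact sequences against the pair of isomorphisms $f_*\scH^{q+1}\cong\bR^2 f_*\scH^q$ (one from $\scK$, one from $\scL\otimes\scK$) should collapse the entire tower of $\bR^\bullet f_*\scH^q(\scK)$ to zero, verifying Assumption \ref{assum:induction} and allowing Theorem \ref{thm:main} to produce the desired tilting vector bundle generating $D^-(X)$.

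The hard part is precisely this interior propagation. The isomorphism $f_*\scH^{q+1}\cong\bR^2 f_*\scH^q$ alone links adjacent cohomology groups without supplying an interior base case, and the same isomorphism for $\scL\otimes\scK$ does not by itself untangle them; the hyperplane section $H$ breaks the symmetry by lowering the fiber dimension, but verifying flatness of $\scH^q(\scK)$ along $H$ and tracking the resulting long exact sequences requires care. The remark following Theorem \ref{thm:main} — that $\Ext_X^2(\scE_1,\scL^{-2})\otimes_{A_1}\scE_1=0$ despite $\Ext_X^2(\scE_1,\scL^{-2})$ being possibly nonzero — warns that the needed vanishing is not formal but reflects specific two-dimensional geometry. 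A secondary subtlety is the unbounded case (no smallest $b$), which in practice is avoided because Assumption \ref{assum:induction} is invoked only on bounded objects arising in the gluing construction of \S \ref{subsection:gluing}.
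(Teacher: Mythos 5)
Your reduction to Assumption~\ref{assum:induction}, your spectral--sequence analysis (giving $\bR^1 f_{\ast}\scH^q(\scK)=0$ and $f_{\ast}\scH^{q+1}(\scK)\cong \bR^2 f_{\ast}\scH^q(\scK)$), and your decision to pass to a general $H\in\lvert\scL\rvert$ all coincide with the paper's proof. But the step you flag as ``the hard part'' is the entire content of the theorem, and the mechanism you propose for it does not close. Threading the long exact sequence of $0\to\scH^q(\scK)\to\scL\otimes\scH^q(\scK)\to(\scL\otimes\scH^q(\scK))|_H\to 0$ against your two chains of isomorphisms only produces the four-term exact sequence $0\to\bR^1 f_{\ast}\bigl((\scL\otimes\scH^q(\scK))|_H\bigr)\to\bR^2 f_{\ast}\scH^q(\scK)\to\bR^2 f_{\ast}(\scL\otimes\scH^q(\scK))\to 0$, and your endpoint vanishings sit at the wrong ends of the chain to start an induction: at the bottom you get $f_{\ast}\scH^b(\scK)=0$, whereas propagating upward would require $\bR^2 f_{\ast}\scH^b(\scK)=0$, i.e.\ $f_{\ast}\scH^{b+1}(\scK)=0$, which is not known; symmetrically at the top. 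Since $\bR f_{\ast}$ of a nonzero coherent sheaf can perfectly well vanish, no diagram chase among these direct images alone can force the whole tower to zero. (A secondary error: Assumption~\ref{assum:induction} is stated for unbounded $\scK\in D(X)$ and is applied to objects of $\scC_{n-1}$, which are not a priori bounded, so you cannot simply assume extremal indices exist; as it happens the paper's argument is degree-by-degree and needs no boundedness.)

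The paper supplies two ingredients absent from your sketch. First, it applies the one-dimensional-fiber result \cite[Lemma 3.1]{Br1} to the complex $\scK|_H\otimes\scL$, whose total $\bR f_{\ast}$ vanishes, to conclude $\bR f_{\ast}\bigl(\scH^k(\scK|_H\otimes\scL)\bigr)=0$ for every $k$ --- much stronger than your observation that $\bR^2 f_{\ast}$ kills sheaves on $H$, and in particular killing the first term of the four-term sequence above. Second, and decisively, it proves $\bR^2 f_{\ast}\scH^k(\scK)=0$ by contradiction via duality on a fiber: if it were nonzero, the formal function theorem would give a subscheme $E$ of a two-dimensional fiber with $H^2(E,\scH^k(\scK)|_E)\neq 0$; Grothendieck duality converts this into a nonzero map $u\colon\scH^k(\scK)|_E\to\scH^{-2}(D_E)$ whose image has two-dimensional support; restricting $u$ to a general $H$ and dualizing on the curve $E\cap H$ (via $D_{H\cap E}\cong(D_E[-1]\otimes\scL)|_H$) produces $H^1\bigl(E\cap H,\scH^k(\scK|_H\otimes\scL)|_E\bigr)\neq 0$, contradicting the first ingredient because $\bR^1 f_{\ast}$ is right exact on coherent sheaves on $H$. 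This duality step is precisely the non-formal two-dimensional input that the remark you quote is warning about; without it, your argument has a genuine gap.
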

\begin{proof}
We have to show that 
Assumption~\ref{assum:induction} holds so that we apply Theorem
\ref{thm:main}.
Take $\scK \in D(X)$, which satisfies
\begin{align}\label{eqn:induction1}
\bR f_{\ast}\scK = \bR f_{\ast}(\scK \otimes \scL)=0.
\end{align}
Let $H\in \lvert \scL \rvert$ be a general member. 
In what follows, we repeatedly use the fact that
 $\scH^k(\scK|_{H})=\scH^k(\scK)|_{H}$ 
for any $k\in \mathbb{Z}$, since $H$ is a general
member. 
We have 
the distinguished triangle
\begin{align*}
\scK \to \scK \otimes \scL \to \scK|_{H}\otimes \scL.
\end{align*}
Applying $\bR f_{\ast}$ and using (\ref{eqn:induction1}), 
we obtain 
$\bR f_{\ast}(\scK|_{H}\otimes \scL)=0$.
Since $f|_{H}\colon H\to f(H)$ has at most one-dimensional 
fibers, we have (cf. \cite[Lemma 3.1]{Br1})
\begin{align}\label{eqn:induction2}
\bR f_{\ast}(\scH^k(\scK|_{H}\otimes \scL))=0
\end{align}
for any $k$. Similarly, applying $\bR f_{\ast}$ to the triangle 
$$
\scH^{k}(\scK) \to \scH^{k}(\scK\otimes \scL) \to 
\scH^{k}(\scK|_{H}\otimes \scL)
$$
and using (\ref{eqn:induction2}), we obtain 
\begin{align}\label{eqn:induction3}
\bR f_{\ast}(\scH^{k}(\scK))
\cong
\bR f_{\ast}(\scH^{k}(\scK\otimes \scL)).
\end{align}
Next let us consider the spectral sequence:
$$
E_2^{p,q}=\bR^p f_{\ast}(\scH^q(\scK)) \Rightarrow 
\bR^{p+q}f_{\ast}\scK.
$$
Since $E_2^{p,q}=0$ unless $0\le p\le 2$, the above spectral 
sequence and (\ref{eqn:induction1}) imply
\begin{align}\label{eqn:induction4}
\bR^1 f_{\ast}(\scH^k(\scK))=0, \quad 
f_{\ast}(\scH^{k+1}(\scK)) 
\cong
\bR^2 f_{\ast}(\scH^{k}(\scK))
\end{align}
for any $k$. By (\ref{eqn:induction3}) and (\ref{eqn:induction4}),
if we show
$\bR^2 f_{\ast}(\scH^{k}(\scK))=0$ for any $k$,  
then the conclusion of Assumption~\ref{assum:induction} follows.

 Suppose that $\bR^2 f_{\ast}(\scH^{k}(\scK))\neq 0$ for some $k$. 
By the formal function theorem, there is a closed 
sub-scheme $E\subset X$ supported by a two-dimensional fiber of $f$, such that 
 $H^2(E, \scH^{k}(\scK)|_{E})\neq 0$. 
By the Grothendieck duality, we have 
$$
0\neq H^2(E, \scH^{k}(\scK)|_{E}) \cong 
\Hom_E(\scH^{k}(\scK)|_{E}, \scH^{-2}(D_E))^{\vee}.
$$
Let $u\colon \scH^{k}(\scK)|_{E} \to \scH^{-2}(D_E)$
be a non-zero morphism, and consider its image 
$\Image u \subset \scH^{-2}(D_E)$. Then the support of $\Image u$ 
is two-dimensional because 
$$
0\neq \Hom_E(\Image u, \scH^{-2}(D_E)) \cong 
H^2(E, \Image u)^{\vee}
$$
by the duality.
Hence by the choice of $H\in \lvert \scL \rvert$, we may assume that 
$(\Image u)|_{H} \neq 0$. We may also assume that
$H$ does not contain any associated prime of $\Coker u$.
Then we can show that
$u|_{H} \colon 
\scH^{k}(\scK)|_{E\cap H} \to \scH^{-2}(D_E)|_{H}$
is a non-zero morphism. By adjunction, we have 
$$
D _{H\cap E}
\cong (D_E[-1]\otimes \scL)|_{H}.
$$
Hence $u|_{H}$ induces the non-zero morphism in
$$
\Hom_{E\cap H}(\scH^{k}(\scK)|_{E\cap H},
\scH^{-1}(D_{H\cap E}\otimes \scL^{-1})).$$
Then the duality on $E\cap H$ implies
\begin{align}\label{eqn:induction5}
0\neq 
H^1(E\cap H, \scH^{k}(\scK)|_{E\cap H}\otimes \scL) \cong
H^1(E\cap H,\scH^{k}(\scK|_{H}\otimes \scL)|_{E}).
\end{align}
On the other hand, the surjection 
$$
\scH^k(\scK|_{H}\otimes \scL) 
\twoheadrightarrow \scH^{k}(\scK|_{H}\otimes \scL)|_{E}
$$
induces the surjection 
$$
\mathbb{R}^1 f_{\ast}(\scH^k(\scK|_{H}\otimes \scL))
\twoheadrightarrow \mathbb{R}^1 f_{\ast}(\scH^k(\scK|_{H}\otimes \scL)|_{E}).
$$
However this contradicts (\ref{eqn:induction2}) and 
(\ref{eqn:induction5}), 
hence it follows that $\bR^2 f_{\ast}(\scH^{k}(\scK))=0$. 
\end{proof}


\subsection{Crepant resolutions of three dimensional canonical 
singularities}
Let $0\in Y=\Spec R$ be a $3$-dimensional canonical 
singularity and $R$ be a Noetherian complete local ring. 
Suppose that there is a crepant resolution $f\colon X\to Y$
such that the exceptional locus is an irreducible divisor 
$E\subset X$ and $\mathbb{R}f_*\scO_X=\scO_Y$. 
Then, $E$ is a generalized del Pezzo surface:
 that is, $\omega _E^{-1}$ is ample.
We aim to construct a tilting generator 
of $D^-(X)$. 

\begin{lem}\label{lem:extension}
If there is an ample, globally generated line bundle $\scL_1$ on $E$ with 
$H^2(E, \scL _1^{-1})=0$, then we have an 
ample, globally generated  line bundle 
$\mathcal{L}$ on $X$ 
such that 
$\mathbb{R}^2 f_{\ast}\mathcal{L}^{-1}=0$. 
\end{lem}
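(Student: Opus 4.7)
The plan is to construct $\scL$ on $X$ with $\scL|_E = \scL_1$ and then verify the three required properties in turn. Such an $\scL$ is obtained by lifting $\scL_1$ successively through the infinitesimal thickenings $E_n = V(\scI_E^n)$ to a line bundle on the formal completion of $X$ along $E$, and then algebraizing via Grothendieck's formal existence theorem, which applies because $R$ is a Noetherian complete local ring and $f$ is proper.

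For the lifting, the obstruction to passing from $E_n$ to $E_{n+1}$ lies in $H^2(E,\scI_E^n/\scI_E^{n+1})$. Adjunction together with crepancy ($K_X = f^*K_Y$ and $f(E)=\{0\}$) gives $\scO_E(E) \cong \omega_E$, hence $\scI_E^n/\scI_E^{n+1} \cong \omega_E^{-n}$; and $H^2(E,\omega_E^{-n})$ is Serre-dual to $H^0(E,\omega_E^{n+1})$, which vanishes for $n \ge 0$ because $\omega_E^{n+1}$ is anti-ample. So all obstructions vanish and we obtain $\scL$ on $X$ restricting to $\scL_1$ on $E$. Every curve contracted by $f$ lies in $E$ and $\scL|_E = \scL_1$ is ample, so $\scL$ is $f$-ample; since $Y$ is affine, $\scL$ is ample on $X$. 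For global generation, a parallel Kodaira-vanishing argument (using ampleness of $\scL_1 \otimes \omega_E^{-n-1}$) gives $\bR^1 f_\ast(\scL \otimes \scO_X(-E)) = 0$ by formal functions, hence $H^0(X,\scL) \twoheadrightarrow H^0(E,\scL_1)$, so global generation of $\scL_1$ propagates to points of $E$; at points of $X \setminus E \cong Y \setminus \{0\}$ global generation is automatic since $f_\ast \scL$ is coherent on the affine scheme $Y$.

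It remains to prove $\bR^2 f_\ast \scL^{-1} = 0$. Since $E$ is the only $2$-dimensional fiber of $f$, the sheaf $\bR^2 f_\ast \scL^{-1}$ is coherent and supported at $0$, so by the theorem on formal functions it suffices to show $H^2(E_n, \scL^{-1}|_{E_n}) = 0$ for every $n \ge 1$. The short exact sequences
$$
0 \to \scL_1^{-1} \otimes \omega_E^{-n} \to \scL^{-1}|_{E_{n+1}} \to \scL^{-1}|_{E_n} \to 0
$$
reduce this inductively to $H^2(E, \scL_1^{-1} \otimes \omega_E^{-n}) = 0$ for all $n \ge 0$. The case $n = 0$ is the hypothesis. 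For $n \ge 1$, ampleness of $\omega_E^{-1}$ (plus Kawamata--Viehweg/Riemann--Roch) supplies a nonzero section $s \in H^0(E, \omega_E^{-n})$; because $E$ is reduced and irreducible, multiplication by $s$ is injective, giving an inclusion $\omega_E^{n+1} \hookrightarrow \omega_E$. Tensoring with $\scL_1$ and taking $H^0$ yields an injection $H^0(E, \scL_1 \otimes \omega_E^{n+1}) \hookrightarrow H^0(E, \scL_1 \otimes \omega_E) = 0$, which by Serre duality is exactly the required vanishing.

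The main obstacle is this final bootstrap: translating the single hypothesis $H^2(E, \scL_1^{-1}) = 0$ into the infinite family $H^2(E, \scL_1^{-1} \otimes \omega_E^{-n}) = 0$, for which the anticanonical-section trick is the crucial ingredient. The remaining steps — the obstruction computation, formal algebraization, ampleness, and global generation — are comparatively routine once one has the correct vanishing pattern on $E$.
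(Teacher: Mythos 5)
Your proposal reproduces the paper's argument essentially step for step: lift $\scL_1$ through the infinitesimal neighbourhoods $E_n$ (the obstructions in $H^2(E,\scI_E^n/\scI_E^{n+1})\cong H^2(E,\omega_E^{-n})\cong H^0(E,\omega_E^{n+1})^{\vee}$ vanish by anti-ampleness of $\omega_E$), algebraize via Grothendieck existence over the complete local ring, and kill $\bR^2 f_{\ast}\scL^{-1}$ by formal functions plus Serre duality, with your anticanonical-section trick being exactly the content behind the paper's terse justification ``$-E$ is $f$-ample and $H^0(E,\scL_1\otimes\omega_E)=0$'' for the twists $n\ge 1$. The only divergence is global generation, where the paper invokes the relative basepoint-free theorem on the smooth threefold $X$ whereas you argue via an $H^1$-vanishing on $E$ itself; your route quietly assumes a Kodaira-type vanishing on the possibly singular (even non-normal) generalized del Pezzo surface $E$, so the paper's formulation on $X$ is the safer one, but this does not affect the core of the lemma.
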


\begin{proof}
Let $I_E \subset \mathcal{O}_X$ be the defining ideal of 
$E$ and $E_n \subset X$ the subscheme defined by $I_E ^n$ for $n>0$. 
Then the obstruction to extend a line bundle $\mathcal{L}_n \in \Pic(E_n)$ 
to a line bundle $\mathcal{L}_{n+1}\in \Pic(E_{n+1})$ lies in 
$H^2 (E, I_E ^n/I_E ^{n+1})$. We have 
\begin{align*}
H^2 (E, I_E ^n/I_E ^{n+1}) & \cong H^2 (E, \mathcal{O}_E(-nE)) \\
& \cong H^0(E, \mathcal{O}_E((n+1)E))^{\vee} \\
& = 0.
\end{align*}
Here, the second isomorphism follows from the Serre duality, and the last 
isomorphism holds because $-E$ is $f$-ample. Hence, for a
given line bundle $\scL _1 \in \Pic(E)$, we obtain an element 
$$\hat{\mathcal{L}}=\{ \mathcal{L}_n \}_{n\ge 1}
 \in \lim _{\longleftarrow}\Pic(E_n) \cong \Pic(\hat{X}).$$
By the Grothendieck existence theorem, 
there is a line bundle $\mathcal{L}$ on $X$ such that 
$\mathcal{L}|_{\hat{X}}\cong \hat{\mathcal{L}}$. 

Take an ample, globally generated line bundle $\scL _1$ on $E$ such that 
$H^2 (E, \scL _1^{-1})=0$. Let 
 $\mathcal{L} \in \Pic(X)$ be its extension. 
We have
\begin{align*}
H^2 (E, \scL _1^{-1}\otimes I_E ^n /I_E ^{n+1}) & \cong 
H^0(E, \scL _1\otimes \mathcal{O}_E (nE)\otimes \omega _E)^{\vee}\\
&= 0,
\end{align*}
since $-E$ is $f$-ample and $H^0(E, \scL _1\otimes \omega _E)=0$. 
Hence $\mathbb{R}^2 f_{\ast}\mathcal{L}^{-1}=0$
by the formal function theorem. 
$\mathcal{L}$ is also globally generated by the basepoint free theorem, 
and clearly $\mathcal{L}$ is ample.
\end{proof}

 In particular, Theorem \ref{thm:rel.dim2} implies the following.

\begin{thm}\label{thm:CY}
In the situation of Lemma \ref{lem:extension},
there is a tilting generator of $D^-(X)$.
\end{thm}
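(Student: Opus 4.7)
The plan is to verify the hypotheses of Theorem~\ref{thm:rel.dim2} in our setting and invoke that theorem directly. Three of the required conditions are immediate: $f\colon X\to Y=\Spec R$ is projective (as a proper birational morphism to an affine scheme), $R$ is a Noetherian complete local ring by assumption, and $\bR f_\ast\scO_X=\scO_Y$ holds by hypothesis.

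The fiber-dimension hypothesis is essentially automatic. Since $Y$ is three-dimensional and $X\to Y$ is a resolution of singularities, $\dim X=3$; the exceptional divisor $E$ is two-dimensional. Off $f(E)$ the morphism is an isomorphism, so fibers there are zero-dimensional, while over points of $f(E)$ any fiber is contained in $E$ and hence has dimension at most two. The only remaining ingredient required by Theorem~\ref{thm:rel.dim2} is an ample, globally generated line bundle $\mathcal{L}$ on $X$ satisfying $\bR^2f_\ast\mathcal{L}^{-1}=0$, and this is exactly what Lemma~\ref{lem:extension} produces from the assumed $\scL_1$ on $E$. Thus the proof reduces to applying Lemma~\ref{lem:extension} to obtain $\mathcal{L}$, and then invoking Theorem~\ref{thm:rel.dim2}.

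I do not anticipate a substantive obstacle here; the genuinely hard work has already been carried out elsewhere, namely the verification of Assumption~\ref{assum:induction} in the two-dimensional fiber case (inside the proof of Theorem~\ref{thm:rel.dim2}, which uses Grothendieck duality on $E$ and $E\cap H$ together with the formal function theorem) and the extension of $\scL_1$ across the infinitesimal neighborhoods of $E$ via Grothendieck's existence theorem and the base-point free theorem (in Lemma~\ref{lem:extension}). The proof of Theorem~\ref{thm:CY} is essentially an assembly argument combining these two results.
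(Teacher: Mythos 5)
Your proposal is correct and is exactly the paper's argument: the paper derives Theorem~\ref{thm:CY} immediately by feeding the line bundle produced by Lemma~\ref{lem:extension} into Theorem~\ref{thm:rel.dim2}, with the fiber-dimension and $\bR f_\ast\scO_X=\scO_Y$ hypotheses holding for the same reasons you give. Your explicit verification of the (at most) two-dimensionality of the fibers is a detail the paper leaves implicit, but there is no difference in substance.
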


\begin{exa}
There is a $3$-dimensional crepant resolution $f\colon X\to Y$ 
from a Calabi-Yau threefold $X$ defined over $\mathbb{C}$
whose exceptional locus is isomorphic to $E$ in (i), (ii) below
(\cite{Kapu2}, \cite{Kapu}).
Replace $Y$ with its completion at the singular point 
and shrink $X$ accordingly.

We show the existence of 
tilting generators of $D^-(X)$.
The key fact is that if we have a line bundle $\scL_1$ on $E$, as in  
Lemma \ref{lem:extension}, then we can find a tilting generator of $D^-(X)$
by Theorem \ref{thm:CY}.
\begin{enumerate}
\item
 The first example is a quadric $E\subset \mathbb{P}^3$, 
 that is, $E$ is isomorphic to $\mathbb{P}^1 \times \mathbb{P}^1$
or the cone over a conic.  
Then $\scL _1=\mathcal{O}_{\mathbb{P}^3}(1)|_{E}$
satisfies $H^2 (E, \scL_1^{-1})=0$. 
\item 
For the second example, take the cone over a conic $S \subset \mathbb{P}^3$.
Let $E$ be a surface obtained by the
blowing-up $\pi \colon E\to S$ at a non-singular point in $S$. 
Note that $E$ is a singular del Pezzo surface. 
Denote by $C$ the exceptional curve of $\pi$ and
put $\mathcal{O}(1)=\mathcal{O}_{\bP ^3}(1)|_S$. 
 Then $\scL _1=\pi^{\ast}\mathcal{O}(1) \otimes \mathcal{O}_E(-C)$
is an ample, globally generated line bundle satisfying 
$H^2(E,\scL_1^{-1})=0$. 


\end{enumerate}
\end{exa}


\section{The cotangent bundle of $G(2,4)$}\label{section:G(2,4)}

In \S \ref{sec:preliminary}, 
we cite and prove some results that \S \ref{sub1} uses.
In \S \ref{sub1}, 
we find tilting generators on a
one-parameter deformation of the cotangent bundle $X_0=T^*G(2,4)$,
where $G(2, 4)$ is 
the Grassmann manifold. 
We assume all varieties are defined over $\bC$ in this section.


\subsection{The Bott theorem}\label{sec:preliminary}
Let $G$ be the Grassmann manifold $G(k,V)$ of $k$-dimensional subspaces
in an $n$-dimensional $\bC$-vector space $V$. 
There is a non-split exact sequence
$$
0\to \Omega _G\to \widetilde{\Omega}_G \to \scO_G \to 0
$$
corresponding to a nonzero element of the $1$-dimensional space
$H^ 1(G,\Omega _G)$. Put 
$\widetilde{T}_G=(\widetilde{\Omega}_G)^\vee$.
We denote the total space of 
$\widetilde{\Omega}_G$ (resp. $\Omega_G$)
by $X$ (resp. $X_0$).
Then 
there is a one-parameter deformation (\cite{Na2}, \cite{Kaw24})
$$
\xymatrix{
X \ar[r]^{f}\ar[dr] & Y \ar[d] \\
& \mathbb{A}^1
}
$$
of the Springer resolution 
$$
f_0\colon X_0\to Y_0=\Spec R_0.
$$ 
We denote by $\pi\colon X \to G$ and  
$\pi_0\colon X_0\to G$ the projections.

Let $\scU$ be 
the tautological $k$-dimensional sub-bundle of $\scO_G\otimes V$.
We also define $\scU^\perp$ to be 
$((\scO_G\otimes V)/ \scU)^\vee$,
the dual of the quotient bundle.
For a vector bundle $\scE$ of rank $m$ on $G$,
we consider the associated principal $\GL (m,\bC)$-bundle and
denote by $\Sigma ^\alpha \scE$ the vector bundle 
associated with the $\GL (m,\bC)$ representation
of highest weight $\alpha\in \bZ^m$.
 For $\alpha=(\alpha_1,\ldots,\alpha_m)\in \bZ^m$ with
$\alpha_1\ge\cdots\ge \alpha_m$ 
(such a sequence is called a \textit{non-increasing} sequence),
we have 
$$
\Sigma ^\alpha (\scE^\vee) =
\Sigma ^{(-\alpha_m,\ldots,-\alpha_1)} \scE= 
(\Sigma ^\alpha \scE)^\vee.
$$ 

We have the following equality:

\begin{align}\label{eqn:base2}
&\Hom^i_G(\Sigma^{\alpha}\scU,\Sigma^{\beta}\scU
\otimes (\bigoplus_{n\ge 0}\Sym ^n(T_G)))\notag\\
=&\bigoplus_{n\ge 0} \bigoplus_{|\lambda|=n}
H^i(G,\Sigma^{\alpha}\scU^\vee\otimes\Sigma^{\beta}\scU
\otimes
\Sigma^{\lambda}\scU^\vee\otimes \Sigma ^{\lambda}(\scU^{\perp})^\vee). 
\end{align}
%
Here $|\lambda|=\sum \lambda_l$ and all the $\lambda_l$'s are non-negative.
For the proof of (\ref{eqn:base2}), see \cite[page 80]{F-H} and use 
$T_G=\scU^\vee\otimes(\scU^{\perp})^\vee$.

\begin{lem}\label{lem:X_0toX}
Suppose that
the vector space in (\ref{eqn:base2}) is 0-dimensional
for fixed $i$, $\alpha$ and $\beta$. 
Then the vector space
$
\Hom^i_X(\pi^*\Sigma^{\alpha}\scU,\pi^*\Sigma^{\beta}\scU)
$
is also 0-dimensional.
\end{lem}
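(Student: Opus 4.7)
The plan is to reduce the cohomology computation on $X$ to a cohomology computation on $G$ via the affine projection $\pi$, and then to compare $\pi_*\scO_X$ with $\pi_{0*}\scO_{X_0}$ using the defining extension $0\to\scO_G\to\widetilde{T}_G\to T_G\to 0$.

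First, since $\pi\colon X\to G$ is affine and $\pi^*\Sigma^{\alpha}\scU$ is locally free, projection formula and Leray give
\[
\Hom^i_X(\pi^*\Sigma^{\alpha}\scU,\pi^*\Sigma^{\beta}\scU)
\;\cong\;
H^i\bigl(G,\;\Sigma^{\alpha}\scU^{\vee}\otimes\Sigma^{\beta}\scU\otimes\pi_*\scO_X\bigr),
\]
and $\pi_*\scO_X\cong\bigoplus_{n\ge0}\Sym^n(\widetilde{T}_G)$ as $\scO_G$-modules. The analogous identity holds for $X_0$ with $\widetilde{T}_G$ replaced by $T_G$, and feeding the Cauchy decomposition of $\Sym^n(T_G)$ through it is precisely what produces the right-hand side of (\ref{eqn:base2}). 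Hence the hypothesis is equivalent to
\[
H^i\bigl(G,\;\Sigma^{\alpha}\scU^{\vee}\otimes\Sigma^{\beta}\scU\otimes\Sym^n(T_G)\bigr)=0
\qquad\text{for every }n\ge0.
\]

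Next, the short exact sequence $0\to\scO_G\to\widetilde{T}_G\to T_G\to 0$ endows each $\Sym^n(\widetilde{T}_G)$ with a decreasing filtration
\[
\Sym^n(\widetilde{T}_G)=F^0\supset F^1\supset\cdots\supset F^n\supset F^{n+1}=0,
\]
whose successive quotients $F^j/F^{j+1}$ are isomorphic to $\Sym^{n-j}(T_G)$ (the subbundle is a trivial line bundle, so its symmetric powers contribute trivially). Twisting this filtration by $\Sigma^{\alpha}\scU^{\vee}\otimes\Sigma^{\beta}\scU$ and running the long exact sequences in cohomology, the vanishing of the $\Sym^m(T_G)$-twisted $H^i$ for all $m\le n$ implies the vanishing of the $\Sym^n(\widetilde{T}_G)$-twisted $H^i$ by a short induction on the length of the filtration.

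Finally, summing over $n\ge 0$ and feeding the result back into the first displayed identity yields $\Hom^i_X(\pi^*\Sigma^{\alpha}\scU,\pi^*\Sigma^{\beta}\scU)=0$. There is no real obstacle; the only step that requires care is the identification of the graded pieces of the filtration on $\Sym^n(\widetilde{T}_G)$ and the bookkeeping that recombines the graded decomposition of $\Sym^n(T_G)$ (via Cauchy's formula and $T_G=\scU^{\vee}\otimes(\scU^{\perp})^{\vee}$) with the right-hand side of (\ref{eqn:base2}) so that the hypothesis is applicable in the correct form.
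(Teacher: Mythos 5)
Your proposal is correct and follows essentially the same route as the paper: reduce to cohomology on $G$ via $\pi_*\scO_X\cong\bigoplus_{n\ge 0}\Sym^n(\widetilde{T}_G)$ and then use the filtration of $\Sym^n(\widetilde{T}_G)$ with graded pieces $\Sym^{n-l}(T_G)$ coming from the extension $0\to\scO_G\to\widetilde{T}_G\to T_G\to 0$. The extra bookkeeping you mention (Cauchy's formula and $T_G=\scU^{\vee}\otimes(\scU^{\perp})^{\vee}$) is exactly how the paper arrives at the right-hand side of (\ref{eqn:base2}).
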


\begin{proof}
The assertion follows from the equality
\begin{align*}
\Hom^i_X(\pi^*\Sigma^{\alpha}\scU,\pi^*\Sigma^{\beta}\scU)
&\cong \Hom^i_G(\Sigma^{\alpha}\scU,\Sigma^{\beta}\scU\otimes \pi_*\scO_X) \\
&\cong \Hom^i_G(\Sigma^{\alpha}\scU,\Sigma^{\beta}\scU
\otimes (\bigoplus_{n\ge 0}\Sym ^n(\widetilde{T}_G)))
\end{align*}
and the filtration
$$
\Sym^n(\widetilde{T}_G)=F^0\supset F^1\supset \cdots 
\supset F^n\supset F^{n+1}=0 
$$
with $F^l/F^{l+1}\cong \Sym^{n-l}(T_G)$.
\end{proof}

Let $F(V)$ be the flag variety of $\GL (V)$ and
$$
\scU_1\subset \scU_2\subset\cdots\subset \scU_{n-1}\subset 
\scU_{n}=
V\otimes \scO_{F(V)}.
$$ 
the sequence of the universal sub-bundles $\scU_i$ of rank $i$.
We put 
$$
\scO(\delta_1,\ldots,\delta_n)=
\scU_1^{-\delta_1}
\otimes(\scU_2/\scU_1)^{-\delta_2}\otimes 
\cdots \otimes( \scU_{n}/ \scU_{n-1})^{-\delta_n}.
$$ 
\NI
The following lemma is taken from the proof of
 \cite[Proposition 2.2]{Kapranov}.
\begin{lem}\label{lem:Kapranov}
For non-increasing sequences $\alpha\in \bZ^k,\beta \in \bZ^{n-k}$, 
we have 
$$
H^i(G,\Sigma^{\alpha}\scU^\vee\otimes\Sigma^{\beta}\scU^\perp)
=H^i(F(V),\scO(\Delta)),
$$
where
$\Delta=(\alpha_1,\ldots,\alpha_k,\beta_1,\ldots,\beta_{n-k})$.
\end{lem}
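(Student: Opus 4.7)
The plan is to realize the Grassmannian cohomology on the left as a pushforward calculation from the full flag variety. Consider the natural projection $p\colon F(V)\to G=G(k,V)$ sending a complete flag $\scU_1\subset\cdots\subset\scU_{n}$ to its $k$-th step $\scU_k$. The fiber over a point $[W\subset V]\in G$ is canonically identified with the product of flag varieties $F(W)\times F(V/W)$, and under this identification, the restriction of $\scO(\Delta)$ factors as the external tensor product of $\scO(\alpha_1,\ldots,\alpha_k)$ on $F(W)$ and $\scO(\beta_1,\ldots,\beta_{n-k})$ on $F(V/W)$.

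Next, I would compute $\mathbb{R}p_\ast \scO(\Delta)$ fiberwise using the Borel--Weil--Bott theorem. Since $\alpha$ and $\beta$ are non-increasing (i.e.\ dominant for the usual Borel), the line bundles $\scO(\alpha_1,\ldots,\alpha_k)$ and $\scO(\beta_1,\ldots,\beta_{n-k})$ have vanishing higher cohomology on the fibers, with $H^0$ computing the irreducible $GL(W)$- and $GL(V/W)$-representations of the corresponding highest weights. Globalizing over $G$, this identifies
\[
p_\ast \scO(\Delta)\cong \Sigma^{\alpha}\scU^{\vee}\otimes \Sigma^{\beta}(\scU^{\perp}), \qquad
\mathbb{R}^i p_\ast\scO(\Delta)=0 \text{ for } i>0,
\]
where I use that $\scU_k\cong\scU$ under $p$ and that the quotient $V/\scU_k$ has dual $\scU^{\perp}$.

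Finally, since $\mathbb{R}p_\ast\scO(\Delta)$ is concentrated in degree zero, the Leray spectral sequence degenerates and gives
\[
H^i(F(V),\scO(\Delta))\cong H^i(G,p_\ast\scO(\Delta))
=H^i(G,\Sigma^{\alpha}\scU^{\vee}\otimes \Sigma^{\beta}\scU^{\perp}),
\]
which is exactly the claimed equality.

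The only subtle point is bookkeeping of conventions (dominant versus antidominant weights, and the signs in the definition $\scO(\delta_1,\ldots,\delta_n)=\scU_1^{-\delta_1}\otimes\cdots$), together with correctly identifying $\Sigma^{\alpha}\scU^{\vee}$ as the Schur functor obtained from $H^0$ on the sub-flag factor of the fiber. Once the sign conventions are matched, the argument is essentially a direct application of Borel--Weil in the relative setting.
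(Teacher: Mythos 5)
Your argument is correct: pushing $\scO(\Delta)$ down along the projection $p\colon F(V)\to G$, identifying the fibers with $F(W)\times F(V/W)$, and applying relative Borel--Weil to conclude that $\bR p_{*}\scO(\Delta)\cong \Sigma^{\alpha}\scU^{\vee}\otimes\Sigma^{\beta}\scU^{\perp}$ is concentrated in degree zero, followed by Leray, is exactly the standard route. The paper offers no proof of its own---it quotes the statement from the proof of \cite[Proposition 2.2]{Kapranov}, where the argument is this same relative Borel--Weil computation---so your proof matches the intended one, modulo the dominant/antidominant sign bookkeeping you already flag.
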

\NI
By Lemma \ref{lem:X_0toX} and Lemma \ref{lem:Kapranov},
showing the vanishing of the vector space 
$$
\Hom^i_X(\pi^*\Sigma^{\alpha}\scU,\pi^*\Sigma^{\beta}\scU)
$$
is reduced to the dimension counting of 
the cohomology $H^i(F(V),\scO(\Delta))$ on the flag variety $F(V)$.
Hence, we shall compute $H^i(F(V),\scO(\Delta))$ 
for 
$\Delta=(\delta_1,\ldots,\delta_n)\in \bZ^n$.
The permutation group $\mathfrak S _n$ naturally acts on $\bZ^n$:
$$
\sigma (\delta_1,\ldots,\delta_n)
=(\delta_{\sigma(1)},\ldots,\delta_{\sigma(n)}).
$$
We also define the tilde action of $\mathfrak S _n$ on $\bZ^n$:
$$
\tilde{\sigma} (\Delta)=\sigma(\Delta+\rho)-\rho.
$$
Here $\rho =(n-1,n-2,\ldots,0)$. For instance, 
when we put $\sigma_l=(l\ l+1)$,  we obtain
$$
\tilde{\sigma} _l(\delta_1,\ldots,\delta_n)
=(\delta_1,\ldots,\delta_{l-1},\delta_{l+1}-1,\delta_l+1,\delta_{l+2},
\ldots,\delta_n).
$$
The Bott theorem implies that:

(1) If $\Delta$ is non-increasing, then we have
$$
H^i(F(V),\scO(\Delta))=
\begin{cases}
\Sigma^\Delta V & i=0\\
0 &i>0.
\end{cases}
$$

(2) If $\Delta$ is not non-increasing, then
we apply the tilde action of $\frak S_n$ 
for transpositions like $\sigma_l=(l\ l+1)$, 
trying to move bigger numbers to the 
right past smaller numbers. Repeat this process.
Then there are two possibilities:
\begin{itemize}
\item
Suppose that eventually, 
we achieve $\delta_{l+1}=\delta_l+1$ 
for some $l$.
Then $H^i(F(V),\scO(\Delta))=0$ for all $i$.
\item
Suppose that after applying $j$ times tilde actions 
of transpositions in $\frak S_n$,
we can transform $\Delta$ into a non-increasing sequence $\Delta _0$.
Then we have
$$
H^i(F(V),\scO(\Delta))=
\begin{cases}
\Sigma^{\Delta _0} V & i=j\\
0 &i\ne j.
\end{cases}
$$
\end{itemize}

\subsection{$G(2,4)$}\label{sub1}
Henceforth in this section, $G$ denotes $G(2,4)$.
Let us find a tilting generator of $D^-(X)$ 
using Theorem \ref{thm:main} in this subsection.
Let $\scO_G(1)=\Sigma^{(-1,-1)}\scU$ be a line bundle on $G$ 
which gives the Pl\"ucker embedding 
$G\hookrightarrow \bP^5$ and we denote $\pi^*\scO_G(1)$ 
by $\scO_X(1)$.

First we want to show (\ref{eqn:ample2}) for $\scL=\scO_X(1)$;
namely 
\begin{equation}\label{eqn:Xample2}
H^i(X,\scO_X(-j))(=
\Hom^i_X(\pi^*\Sigma^{(0,0)}\scU,\pi^*\Sigma^{(j,j)}\scU))
=
0
\end{equation}
for $0<j<4$ and $i\ge 2$.
Putting $\alpha=(0,0)$ and $\beta=(j,j)$ in (\ref{eqn:base2})
and using Lemma \ref{lem:Kapranov}, we obtain
\begin{align}\label{eqn:Xample2'}
&\Hom^i_G(\Sigma^{(0,0)}\scU,\Sigma^{(j,j)}\scU
\otimes (\bigoplus_{n\ge 0}\Sym ^n(T_G)))\notag\\
=&\bigoplus_{n\ge 0} \bigoplus_{|\lambda|=n}
H^i(G,
\Sigma^{(j-\lambda_2,j-\lambda_1)}\scU\otimes 
\Sigma^{(-\lambda_2,-\lambda_1)}\scU^\perp)\notag\\
=&\bigoplus_{n\ge 0} \bigoplus_{|\lambda|=n}
H^i(F(V),\scO(\lambda_1-j,\lambda_2-j,-\lambda_2,-\lambda_1)),
\end{align}
where we put $\lambda =(\lambda_1,\lambda_2)$.
For the proof of (\ref{eqn:Xample2}), by Lemma \ref{lem:X_0toX}, 
it is enough to see the vanishing of (\ref{eqn:Xample2'})
for $0<j<4$ and $i\ge 2$.

Denote 
$$
\Delta=(\lambda_1-j,\lambda_2-j,-\lambda_2,-\lambda_1)
$$ 
below.
The Bott theorem says that one of the following occurs:
\begin{itemize}
\item
If $\lambda_2-j \ge -\lambda_2$, then  
$
H^i(F(V),\scO(\Delta))= 0
$
if and only if $i\ne 0$.
\item
If $\lambda_2-j+1= -\lambda_2$, then
$
H^i(F(V),\scO(\Delta))= 0
$
for all $i$.

\item 
If $\lambda_2-j+1< -\lambda_2$, then $\lambda_2=0$ and $j=2,3$,
which implies 
$$
\tilde{\sigma}_2\Delta=(\lambda_1-j,-1,-j+1,-\lambda_1).
$$
In the case $\lambda_1-j\ge -1$, 
$
H^i(F(V),\scO(\Delta))\ne 0
$
implies $i= 1$. 
In the case $\lambda_1-j+1=-1$, 
$
H^i(F(V),\scO(\Delta))= 0
$
for all $i$. In the case $\lambda_1-j+1<-1$, we obtain
$
\lambda_1=0
$ 
and 
$j=3$. Then it is easy to see that 
$
H^i(F(V),\scO(\Delta))= 0
$
for all $i$.
\end{itemize}

\NI
Therefore we obtain 
(\ref{eqn:Xample2}) as desired.

Next we want to check that Assumption \ref{assum:induction} is true, i.e.
$\scK\in D(X)$ satisfies the equality
\NI
\begin{equation}\label{eqn:241}
\bR f_{*}(\scH^k(\scK)\otimes \scO_X(j))=0
\end{equation}
\NI
for any $k$ and $j$, $(0\le j\le 3)$ when we assume the equalities 
\begin{equation}\label{eqn:242}
\bR f_{*}(\scK\otimes \scO_X(j))=0
\end{equation}
for any $j$, $(0\le j \le 3)$.
Because $\scO_X(1)$ gives an embedding 
$h\colon X\hookrightarrow \bP^5_R,$
we can say that (\ref{eqn:242}) is equivalent to 
\NI
\begin{equation}\label{eqn:243}
\bR g_{*}(h_{*}\scK\otimes \scO(j))=0
\end{equation}
for all $j$ with $0\le j\le 3$,
where $g \colon \bP_R^5 \to \Spec R$ is the structure morphism.
On the other hand, $D(\bP_R^5)$ has a semi-orthogonal decomposition
$$
D(\bP_R^5)=\Span{g^{*}D(R)\otimes \scO_{\bP_R^5}(-5), 
g^{*}D(R)\otimes \scO_{\bP_R^5}(-4),\ldots, g^{*}D(R)},
$$
and hence
it follows from our assumption (\ref{eqn:243}) that 
$$
h_{*}\scK \in \Span{g^{*}D(R)\otimes \scO_{\bP_R^5}(-5), g^{*}D(R)
\otimes \scO_{\bP_R^5}(-4)}.
$$ 
Consequently, there is a triangle
\NI
$$
\cdots\to g^{*}W_{-4} \otimes _R \scO_{\bP_R^5}(-4) 
\to h_{*}\scK \to g^{*}W_{-5} \otimes _R \scO_{\bP_R^5}(-5)
\to \cdots
$$
for some $W_l \in D(R)$, and then 
we obtain a long exact sequence
\NI
$$
\cdots\to \scH^k(W_{-4})\otimes _R \scO_{\bP_R^5}(-4) \to \scH^k(h_{*}\scK)
\to \scH^k (W_{-5})\otimes _R \scO_{\bP_R^5}(-5) \to \cdots.
$$
Because the support of $\scH^k(h_{*}\scK)$ is contained in $X$ and 
the support of $\scH^k(W_{-5})\otimes _R \scO_{\bP_R^5}(-5)$ is 
the inverse image of some closed subset on $Y$ by $g$,
the morphism 
$\scH^k(h_{*}\scK) \to \scH^k (W_{-5})\otimes _R \scO_{\bP_R^5}(-5)$
should be zero. Therefore we have a short exact sequence
$$
0 \to \scH^{k-1}(W_{-5})\otimes_R \scO_{\bP_R^5}(-5) 
\to \scH^{k}(W_{-4})\otimes_R \scO_{\bP_R^5}(-4)
\to \scH^k(h_{*}\scK) \to 0.$$
Then (\ref{eqn:241}) follows.
Now we can construct a tilting generator of $D^-(X)$ 
by Theorem \ref{thm:main}.

We have proved the following:

\begin{thm}\label{thm:G(2,4)}
The derived category $D^-(X)$ 
has a tilting generator 
which is a vector bundle on $X$. 
\end{thm}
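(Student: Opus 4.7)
The plan is to apply Theorem \ref{thm:main} with $n=4$ and $\scL=\scO_X(1)=\pi^*\scO_G(1)$, where $\scO_G(1)=\Sigma^{(-1,-1)}\scU$ is the Pl\"ucker line bundle. Note that $\scL$ is globally generated and is $f$-ample because $\pi_0\colon X_0\to G$ factors the Springer resolution, and ampleness extends through the deformation. Two things remain to verify: the vanishing (\ref{eqn:ample2}) from \S\ref{setting}, namely $\bR^i f_*\scO_X(-j)=0$ for $i\ge 2$ and $0<j<4$, and Assumption~\ref{assum:induction}.

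For the vanishing, the idea is to pass from $X$ to the Grassmannian $G$ via Lemma~\ref{lem:X_0toX}, which reduces the $\Ext$-groups on $X$ to the expression (\ref{eqn:base2}) involving Schur functors of $\scU$ and $\scU^\perp$. Lemma~\ref{lem:Kapranov} then transports these cohomology groups on $G$ to cohomology of line bundles $\scO(\Delta)$ on the full flag variety $F(V)$, where the Bott theorem gives a completely combinatorial answer. Setting $\alpha=(0,0)$, $\beta=(j,j)$ and $\lambda=(\lambda_1,\lambda_2)$ in (\ref{eqn:base2}) yields $\Delta=(\lambda_1-j,\lambda_2-j,-\lambda_2,-\lambda_1)$, and a case analysis on the relative sizes of $\lambda_2-j$ and $-\lambda_2$ (and, when needed, a further application of the tilde action $\tilde{\sigma}_2$) shows that $H^i(F(V),\scO(\Delta))$ only contributes in degrees $i=0$ or $i=1$. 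This gives the vanishing in every degree $i\ge 2$.

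For Assumption~\ref{assum:induction} with $n=4$, I would exploit the Pl\"ucker embedding $h\colon X\hookrightarrow \bP^5_R$, whose structure morphism I denote by $g\colon \bP^5_R\to\Spec R$. Under the hypothesis $\RHom_X(\bigoplus_{i=0}^{3}\scL^{-i},\scK)=0$, i.e.\ $\bR g_*(h_*\scK\otimes\scO(j))=0$ for $0\le j\le 3$, Beilinson's semi-orthogonal decomposition of $D(\bP^5_R)$ forces $h_*\scK$ to lie in the subcategory $\langle g^*D(R)\otimes\scO_{\bP^5_R}(-5),\, g^*D(R)\otimes\scO_{\bP^5_R}(-4)\rangle$. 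Expressing $h_*\scK$ as a two-term extension and taking the long exact cohomology sequence gives a short exact sequence of the form
\[
0\to \scH^{k-1}(W_{-5})\otimes_R\scO_{\bP^5_R}(-5)\to \scH^k(W_{-4})\otimes_R\scO_{\bP^5_R}(-4)\to \scH^k(h_*\scK)\to 0,
\]
where the vanishing of the would-be connecting map uses that $\scH^k(h_*\scK)$ is supported on $X$, whereas the rightmost term is the pullback of a complex on $\Spec R$. Pushing this sequence forward by $g$ with suitable twists produces the required vanishing for each cohomology sheaf $\scH^k(\scK)$.

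The delicate point is the Bott-theoretic case analysis in the first step, which requires keeping track of several sub-cases for $(\lambda_1,\lambda_2,j)$ without missing a regime in which the tilde action could accidentally produce a degree-$2$ contribution; the rest is essentially formal once the right geometric framework (Pl\"ucker embedding plus the semi-orthogonal decomposition on $\bP^5$) is in place. Once both verifications are complete, Theorem~\ref{thm:main} immediately produces the desired tilting generator as a vector bundle on $X$.
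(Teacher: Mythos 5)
Your proposal follows the paper's own argument essentially step for step: the same reduction of the required vanishing $\bR^if_*\scO_X(-j)=0$ ($i\ge 2$, $0<j<4$) to the Bott theorem on $F(V)$ via Lemma~\ref{lem:X_0toX} and Lemma~\ref{lem:Kapranov} with $\Delta=(\lambda_1-j,\lambda_2-j,-\lambda_2,-\lambda_1)$, and the same verification of Assumption~\ref{assum:induction} via the Pl\"ucker embedding into $\bP^5_R$, the semi-orthogonal decomposition, and the support argument killing the connecting map, before invoking Theorem~\ref{thm:main}. This matches the paper's proof, so there is nothing to add.
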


\begin{cor}[cf. \cite{Kale}]\label{cor:g(2,4)}
The derived category $D^-(X_0)$ 
has a tilting generator 
which is a vector bundle on $X_0$. 
\end{cor}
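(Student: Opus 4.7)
The plan is to deduce the corollary from Theorem~\ref{thm:G(2,4)} by restricting a tilting generator on the total space $X$ of the one-parameter deformation to the central fiber $X_0$. Let $\scE$ be the tilting vector bundle on $X$ produced by Theorem~\ref{thm:G(2,4)}, and let $i\colon X_0\hookrightarrow X$ be the closed immersion. Denote by $t\in H^0(X,\scO_X)$ the pullback of the coordinate on $\mathbb{A}^1$. Since $X\to\mathbb{A}^1$ is a flat family (its generic fiber and central fiber both having the expected dimension), $t$ is a non-zero-divisor on $X$ and $X_0$ is the Cartier divisor cut out by $t$. I claim that $\scE|_{X_0}=i^*\scE$ is the desired tilting generator of $D^-(X_0)$.

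For the tilting property, I would use the Koszul-type short exact sequence
\[
0\to \scE \xrightarrow{\,\cdot t\,} \scE \to i_{\ast}(\scE|_{X_0}) \to 0
\]
on $X$. Applying $\RHom_X(\scE,-)$ and invoking $\Ext^i_X(\scE,\scE)=0$ for $i\neq 0$, the long exact sequence collapses to give $\Ext^i_X(\scE,i_{\ast}(\scE|_{X_0}))=0$ for all $i\geq 1$. Because $\scE$ is locally free, $Li^*\scE=\scE|_{X_0}$, so by adjunction
\[
\Ext^i_{X_0}(\scE|_{X_0},\scE|_{X_0})\cong \Ext^i_X(\scE,i_{\ast}(\scE|_{X_0}))=0
\]
for $i\geq 1$, proving that $\scE|_{X_0}$ is tilting. (It is automatically a vector bundle as the restriction of one.)

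For the generator property, take any $\scK\in D^-(X_0)$ with $\RHom_{X_0}(\scE|_{X_0},\scK)=0$. Since $i$ is a closed immersion, $i_{\ast}$ is exact and sends $D^-(X_0)$ into $D^-(X)$; by the same adjunction,
\[
\RHom_X(\scE,i_{\ast}\scK)\cong \RHom_{X_0}(\scE|_{X_0},\scK)=0.
\]
Because $\scE$ generates $D^-(X)$ by Theorem~\ref{thm:G(2,4)}, this forces $i_{\ast}\scK=0$, and then $\scK=0$ by faithfulness of $i_{\ast}$ (using $i^*i_{\ast}\cong\id$ on quasi-coherent sheaves along a closed immersion). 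Lemma~\ref{lemma:tilting} then yields the derived equivalence.

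The only point that requires genuine verification is that $t$ is a non-zero-divisor on $X$, equivalently that the deformation $X\to\mathbb{A}^1$ is flat at $0$; this is standard for the one-parameter deformation of the Springer resolution constructed in \cite{Na2}, \cite{Kaw24}, so I do not foresee a real obstacle. The appeal of this approach over re-running the machinery of Theorem~\ref{thm:main} directly on $X_0$ is that it completely bypasses the separate verification of Assumption~\ref{assum:induction} on $X_0$: all the cohomological hard work has already been done on $X$ in \S\ref{sub1}.
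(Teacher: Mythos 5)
Your proposal is correct and follows essentially the same route as the paper: restrict $\scE$ to the central fiber, use the short exact sequence $0\to\scE\xrightarrow{\cdot t}\scE\to i_{\ast}(\scE|_{X_0})\to 0$ together with $\RHom_X(\scE,-)$ and adjunction for the tilting property, and push forward along $i$ to transfer the generator property. You merely spell out the adjunction and conservativity steps that the paper leaves as ``we can directly check.''
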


\begin{proof}
Let $\scE$ be a tilting generator in $D^-(X)$ constructed above.
Put $\scE_0=i^*\scE$, where $i\colon X_0\hookrightarrow X$ is the embedding.
Since $X$ is a one-parameter deformation of $X_0$, 
there is an exact sequence
$0\to \scO_X\to \scO_X \to \scO_{X_0}\to 0$. 
Taking a tensor product with $\scE$, we obtain an exact sequence
\begin{align}\label{eqn:seq}
0\to \scE \to \scE \to \scE_{0}\to 0.
\end{align}
Applying $\RHom(\scE, -)$ to (\ref{eqn:seq}), 
we can conclude that $\scE_0$ is a tilting object.
We can directly check that $\scE_0$ is a generator. 
\end{proof}



\section{Auxiliary result:
the existence of a right adjoint functor}\label{section:auxiliary}

In this section, we show the existence of a right adjoint functor
of $\Phi_{n-1}$, which is needed in \S \ref{subsection:gluing}.
Let $Y$ be a scheme of finite type over a field or 
a spectrum of a Noetherian complete local ring.
This condition assures the existence of the dualizing complex
on $Y$.
Let us consider a projective morphism between schemes $f\colon X\to Y$.
Then we know that $R=H^0(X,\scO_X)$ has the dualizing complex $D_R$.
For a vector bundle $\scE$ on $X$,
put 
\begin{align*}
&\scA=\scEnd_X\scE,\quad A=\End_X \scE, \\
&D_\scA=\scRHom_X(\scA,D_X), \quad D_A=\RHom_R(A,D_R),\\
&\bD_\scA(-)=\scRHom_\scA(-,D_\scA)\colon D^-(\scA)\to D^+(\scA^{\circ}),\\
&\bD_A(-)=\RHom_A(-,D_A)\colon D^-(A)\to D^+(A^{\circ}),\\
&\tilde{\Phi}(-)=\scRHom_X(\scE,-)\colon D^-(X)\to D^-(\scA),\\
&\Phi(-)=\RHom_X(\scE,-)\colon D^-(X)\to D^-(A),\\
&\Psi(-)=(-)\Lotimes_A \scE \colon D^{-}(A) \to D^{-}(X),\\
&\bD_R=\RHom_R(-,D_R) \colon D^-(R)\to D^+(R).
\end{align*}
For the dual vector bundle $\scE^\vee$  of $\scE$, we put
\begin{align*}
 &\tilde{\Phi}^{\circ}=\scRHom_X(\scE^\vee,-)
 \colon D^+(X)\to D^+(\scA^{\circ}).
\end{align*}
Lemma \ref{lemma:duality} must be well-known to specialists.
When $\scE=\scO_X$, the lemma is a paraphrase of 
the Grothendieck duality for the natural projective morphism 
$g\colon X \to \Spec R$. 
%
\begin{lem}\label{lemma:duality}
$\bD_A\circ \Phi\cong \Phi\circ \bD_X$.
\end{lem}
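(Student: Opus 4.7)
The plan is to reduce the claim to two classical dualities: the tensor-Hom adjunction between $A$-modules and $R$-modules, and Grothendieck duality for the projective structure morphism $g\colon X\to \Spec R$. The isomorphism then follows by unwinding both sides to a common sheaf-theoretic expression, and the only real work is to verify that the various $A$-module structures match.

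Starting from the left, for $K\in D^-(X)$ I would compute
\[
\bD_A\Phi(K) \;=\; \RHom_A\bigl(\RHom_X(\scE,K),\,\RHom_R(A,D_R)\bigr).
\]
The derived version of the standard adjunction $(-\otimes_A A)\dashv \RHom_R(A,-)$ gives the natural identification $\RHom_A(M,\RHom_R(A,D_R))\cong \RHom_R(M,D_R)$, so applying this with $M=\RHom_X(\scE,K)$ yields $\bD_A\Phi(K)\cong \RHom_R(\RHom_X(\scE,K),D_R)$. Since $\scE$ is a vector bundle, $\scRHom_X(\scE,K)\cong \scE^\vee\otimes K$, and therefore $\RHom_X(\scE,K)\cong \Rg_*(\scE^\vee\otimes K)$ as $R$-complexes. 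Grothendieck duality for the projective morphism $g$, combined with the identification $g^!D_R\cong D_X$ (which is why the hypothesis on $R$ is imposed), then gives
\[
\bD_A\Phi(K) \;\cong\; \Rg_*\,\scRHom_X(\scE^\vee\otimes K,\,D_X).
\]

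On the other side, I would rewrite $\Phi\circ\bD_X$ using the same vector-bundle identification: $\Phi(\bD_X K)=\RHom_X(\scE,\bD_X K)\cong \Rg_*(\scE^\vee\otimes \bD_X K)$. The two expressions are compared through the natural isomorphism
\[
\scRHom_X(\scE^\vee\otimes K,\,D_X)\;\cong\;\scRHom_X(K,\,\scRHom_X(\scE^\vee,D_X))\;\cong\;\scE\otimes \bD_X(K),
\]
using that $\scE$ is locally free. Combined with the sheafified analogue of the argument above, this produces the required isomorphism as complexes of sheaves; pushing forward by $\Rg_*$ yields the isomorphism of $R$-complexes underlying both sides of the lemma.

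The main obstacle is the bookkeeping of module structures: Grothendieck duality and the tensor-Hom adjunction give the isomorphism only as $R$-complexes, whereas the target lives in $D^+(A^\circ)$. The key point is that $A$ is an $(A,A)$-bimodule, so $D_A=\RHom_R(A,D_R)$ inherits two commuting $A$-actions; the adjunction $\RHom_A(-,D_A)\cong \RHom_R(-,D_R)$ uses up one of them, and the other survives to equip $\bD_A\Phi(K)$ with the appropriate module structure. On the right-hand side, the corresponding structure comes from the $\scA=\scEnd_X\scE$-action on $\scE$ (equivalently on $\scE^\vee$), and one must check that under the chain of isomorphisms above the two structures are identified naturally in $K$. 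With this bookkeeping in hand, no further work is required, and the resulting right adjoint $\Psi'_{n-1}$ is obtained as $\bD_X\circ\Psi_{n-1}\circ\bD_{A_{n-1}}$, as needed in \S\ref{subsection:gluing}.
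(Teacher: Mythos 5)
Your overall strategy---reduce $\bD_A$ to $\bD_R$ via the adjunction $\RHom_A(M,\RHom_R(A,D_R))\cong\RHom_R(M,D_R)$, then apply Grothendieck duality for $g\colon X\to\Spec R$ together with the local freeness of $\scE$---uses exactly the same ingredients as the paper's proof (which packages them as the commutativity of the two squares in the diagram (\ref{eqn:cd}), factoring $\Phi$ through the sheaf category $D^-(\scA)$). However, your final comparison contains a genuine error. Your left-hand computation correctly yields
$$
\bD_A\Phi(K)\;\cong\;\bR g_{*}\,\scRHom_X(\scE^{\vee}\otimes K,\,D_X)\;\cong\;\bR g_{*}(\scE\otimes\bD_X K)\;=\;\RHom_X(\scE^{\vee},\bD_X K),
$$
whereas you rewrite the right-hand side of the lemma as $\RHom_X(\scE,\bD_X K)\cong\bR g_{*}(\scE^{\vee}\otimes\bD_X K)$. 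The objects $\bR g_{*}(\scE\otimes\bD_X K)$ and $\bR g_{*}(\scE^{\vee}\otimes\bD_X K)$ are not isomorphic in general, and no ``sheafified analogue'' bridges them. The resolution is that the $\Phi$ on the right of the lemma cannot literally be $\RHom_X(\scE,-)$: that functor takes values in right $A$-modules, while $\bD_A\circ\Phi$ lands in $D^+(A^{\circ})$. As the paper's diagram makes explicit, the bottom row of (\ref{eqn:cd}) is $\RGamma\circ\tilde{\Phi}^{\circ}=\RHom_X(\scE^{\vee},-)$, so the statement actually being proved is $\bD_A\circ\Phi\cong\RHom_X(\scE^{\vee},-)\circ\bD_X$---which is precisely what your left-hand computation already establishes. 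The mathematics you carried out is right; the error is in identifying the target.

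The second, smaller issue is the one you flag yourself: the module structure. Grothendieck duality and the tensor-Hom adjunction give the isomorphism only in $D^+(R)$, and asserting that ``one of the two commuting $A$-actions survives'' does not by itself produce a natural transformation of functors valued in $D^+(A^{\circ})$. The paper resolves this by first constructing an explicit morphism of functors $\phi\colon\RGamma\circ\bD_{\scA}\to\bD_A\circ\RGamma$ out of the unit of the adjunction between $\tilde{\Phi}\circ\Psi$ and $\RGamma$ together with the comparison map $\RGamma(D_{\scA})\to D_A$ coming from Grothendieck duality---so that $\phi$ is $A^{\circ}$-linear by construction---and only then checking that $\phi$ becomes an isomorphism after applying the forgetful functor to $D^-(R)$, which is where your computation lives. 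You would need to supply such a globally defined morphism before your $R$-linear identifications can be upgraded to the claimed isomorphism in $D^+(A^{\circ})$.
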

\begin{proof}
We have a diagram:
\begin{equation}\label{eqn:cd}
\begin{CD}
D^-(X) @>{\tilde{\Phi}}>>    D^-(\scA)      
@>{\RGamma}>>  D^-(A) \\
@V{\bD_X}VV    @V{\bD_\scA}VV   @V{\bD_A}VV \\
D^+(X) @>{\tilde{\Phi}^{\circ}}>> D^+(\scA^{\circ})
@>{\RGamma}>>  D^+(A^{\circ}).
\end{CD}
\end{equation}
We note that there is an isomorphism 
$\Phi\cong \RGamma\circ \tilde{\Phi}$ and that $\tilde{\Phi}$ gives 
an equivalence of derived categories (\cite{Rickard}).

First, we show that the left diagram in (\ref{eqn:cd}) is commutative.
For $\scN\in D^-(X)$,
we have 
\begin{align}
\bD_\scA\circ\tilde{\Phi}(\scN)
&\cong \scRHom_\scA(\scRHom_X(\scE,\scN),D_\scA)\notag\\
&\cong 
\scRHom_\scA(\scRHom_X(\scE,\scN),\scRHom_X(\scE,\scE\otimes D_X))\notag\\
&\cong \scRHom_X(\scN,\scE\otimes D_X) \label{eqn:Ecirc1} \\
&\cong \scRHom_X(\scE^{\vee}, \scRHom_X(\scN, D_X)) \notag\\
&\cong \tilde{\Phi}^{\circ}\circ \bD_X(\scN)\notag.
\end{align}
Here, the isomorphism (\ref{eqn:Ecirc1}) comes from the Morita equivalence
$\Coh U\cong \Coh \scA|_U$ on every affine open set $U\subset X$.

Therefore, it remains to show that the right 
diagram in (\ref{eqn:cd}) is commutative.
The Grothendieck duality for $g\colon X\to \Spec R$ implies 
$$
\RGamma(D_{\scA}) \cong \RHom_{R}(\RGamma(\scA), D_{R}).
$$
Composing this isomorphism with the natural morphism
$A \to \RGamma(\scA)$, we obtain the morphism
\begin{equation}\label{eqn:omega}
\RGamma(D_{\scA})\to D_A.
\end{equation}
Moreover 
since we have
$$\Hom_A(M, \Hom_R(A, N))\cong \Hom_R(M, N)$$
for any $M\in \module A$, $N\in R\module$, 
we have the isomorphism, 
\begin{equation}\label{eqn:AR}
\RHom_A(M,D_A)\cong \RHom_R(M,D_R)
\end{equation}
in $D^-(R)$ for $M\in D^-(A)$.

For $\scM \in D^{-}(\scA)$, we have the following 
sequence of isomorphisms and natural morphisms,
\begin{align}
\RGamma\circ \bD_{\scA}(\scM)
&=\RGamma\circ \scRHom _\scA(\scM,D_{\scA}) \notag \\
&\cong \RHom _\scA(\scM,D_{\scA}) \notag \\
&\to 
\RHom_\scA(\tilde{\Phi}\circ\Psi\circ \RGamma(\scM),D_{\scA}) \label{eqn:nat}
 \\
& \cong \RHom_{A}(\RGamma(\scM), \RGamma(D_{\scA})) \label{eqn:nat2}\\ 
& \to \RHom_{A}(\RGamma(\scM), D_{A}) \label{eqn:nat3} \\
&= \bD_A \circ \RGamma(\scM) \notag.
\end{align} 
Here the morphism (\ref{eqn:nat}) and the isomorphism 
(\ref{eqn:nat2}) are obtained from the fact that
 $\tilde{\Phi}\circ\Psi$
is a left adjoint functor of $\RGamma$, and moreover the morphism
(\ref{eqn:nat3}) comes from the morphism 
(\ref{eqn:omega}). 
Consequently we obtain a morphism of functors
$$
\phi\colon \RGamma\circ \bD_{\scA}\to \bD_A\circ\RGamma.
$$

Next we want to check that $\phi$ is an isomorphism. Note that 
it is enough to check that $\phi$ is isomorphic
after applying the forgetful functor $D^-(A)\to D^-(R)$.
Take $\scN\in D^-(X)$ such that $\tilde{\Phi}(\scN)=\scM$.
Then, because of the commutativity of the left diagram in (\ref{eqn:cd}),
we have 
$$
\RGamma\circ \bD_{\scA}(\scM)\cong
\RGamma\circ \bD_X(\scE^\vee\otimes \scN).
$$
 We also have 
\begin{align*}
\bD_A\circ\RGamma(\scM)
&\cong \RHom_A(\RHom_X(\scE,\scN),D_A)\\
&\cong \RHom_R(\RHom_X(\scE,\scN),D_R)\\
&\cong \bD_R\circ \RGamma (\scE^\vee\otimes \scN).
\end{align*}
by (\ref{eqn:AR}).
Then the Grothendieck duality 
for $g$ implies that $\phi$ is isomorphic. 

\end{proof}

Put
$$D^{\dag}(X)= \left\{ 
\scK \in D(X) \bigm|
\Phi(\scK)\in D^b(A) \right\}.
$$
\begin{lem}\label{lem:adjoint}
The functor $\Phi:D^{\dag}(X)\to D^b(A)$ 
has a right adjoint functor.
\end{lem}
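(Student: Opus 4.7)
The approach is to construct the right adjoint $\Psi'$ explicitly by conjugating the ``opposite'' left adjoint with dualizing functors on both sides, using Lemma \ref{lemma:duality} as the bridge. Let me introduce the $\scE^{\vee}$-analogs of $\Phi$ and $\Psi$: set $\Phi^{\circ} = \RHom_X(\scE^{\vee},-)\colon D^-(X) \to D^-(A^{\circ})$ (compatible with the $\tilde\Phi^{\circ}$ of Lemma \ref{lemma:duality}), with its standard left adjoint $\Psi^{\circ}(N) = N \otimes_{A^{\circ}}^{L} \scE^{\vee}$. Then I would define the candidate right adjoint by
\[
\Psi'(M) \;:=\; \bD_X \circ \Psi^{\circ} \circ \bD_A(M), \qquad M \in D^b(A).
\]
Here $\bD_A$ is an anti-equivalence between $D^b(A)$ and $D^b(A^{\circ})$: since $A$ is finite over $R$, $D_A = \RHom_R(A, D_R)$ is a dualizing complex for $A$, and this is exactly where the standing hypothesis on $Y$ is used.

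For the adjunction property, given $F \in D^{\dag}(X) \cap D^b(X)$ and $M \in D^b(A)$ I would chain the isomorphisms
\begin{align*}
\Hom_A(\Phi F, M)
&\cong \Hom_{A^{\circ}}(\bD_A M,\; \bD_A \Phi F) \\
&\cong \Hom_{A^{\circ}}(\bD_A M,\; \Phi^{\circ} \bD_X F) \\
&\cong \Hom_X(\Psi^{\circ} \bD_A M,\; \bD_X F) \\
&\cong \Hom_X(F,\; \bD_X \Psi^{\circ} \bD_A M) \;=\; \Hom_X(F, \Psi' M).
\end{align*}
The first and last steps use reflexivity of $\bD_A$ on $D^b(A)$ and of $\bD_X$ on $D^b(X)$; the second step is (the proper reading of) Lemma \ref{lemma:duality}, namely $\bD_A\circ\Phi \cong \Phi^{\circ}\circ\bD_X$ as is actually proved there; the third is the $\Psi^{\circ} \dashv \Phi^{\circ}$ adjunction.

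It remains to verify that $\Psi'$ actually lands in $D^{\dag}(X)$, i.e.\ that $\Phi\Psi'(M)\in D^b(A)$. Applying Lemma \ref{lemma:duality} now to $\scE^{\vee}$ in place of $\scE$ yields $\Phi \circ \bD_X \cong \bD_{A^{\circ}} \circ \Phi^{\circ}$, hence
\[
\Phi(\Psi'(M)) \;\cong\; \bD_{A^{\circ}}\bigl(\Phi^{\circ}\Psi^{\circ}\bD_A M\bigr),
\]
and boundedness follows once one knows $\bD_A M \in D^b(A^{\circ})$ and that $\Phi^{\circ}\Psi^{\circ}$ preserves boundedness, which is routine for the vector bundle $\scE^{\vee}$. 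The main obstacle I anticipate is the reflexivity and boundedness bookkeeping: one must verify that $\bD_X\bD_X \cong \id$ on a subcategory containing $D^{\dag}(X)$ (which in practice forces one to check $D^{\dag}(X)\subset D^b(X)$, e.g.\ via $F\cong\Psi\Phi F$ when $\scE$ generates $D^-(X)$) and that $\bD_A$ is a genuine involution on $D^b(A)$. Both rely essentially on the dualizing complex $D_R$ on $Y$, which explains the restriction on $Y$ in the setup of \S\ref{section:auxiliary}.
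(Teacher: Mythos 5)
Your construction is essentially the paper's own proof: the paper defines the right adjoint as $\bD_X\circ\Psi\circ\bD_A$ and asserts the verification is routine, relying on Lemma \ref{lemma:duality} exactly as you do. Your version is in fact the more careful reading of that one-line argument --- replacing $\Psi$ by $\Psi^{\circ}=(-)\Lotimes_{A^{\circ}}\scE^{\vee}$ is what makes the composition with $\bD_A\colon D^b(A)\to D^b(A^{\circ})$ typecheck, and you correctly note that the right-hand $\Phi$ in Lemma \ref{lemma:duality} is really $\Phi^{\circ}$ --- so the proposal matches the paper's approach.
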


\begin{proof}
Indeed, using $\bD_A\circ \Phi\cong \Phi\circ \bD_X$, 
we can readily check that 
$\bD_X\circ\Psi \circ \bD_A$ is a right adjoint functor of $\Phi$.
\end{proof}


\appendix

\section{Non-commutative crepant resolution}\label{section:appendix}
First, let us recall the definition of non-commutative crepant
 resolutions introduced by Van den Bergh \cite{nonc}.

\begin{defn}
Let $k$ be a field, $R$ a normal Gorenstein finitely generated 
$k$-domain. Furthermore we denote by $A$ an $R$-algebra that 
is finitely generated as an $R$-module. $A$ is called a
non-commutative crepant resolution of $R$ if the following conditions hold:
\begin{enumerate}
\item
There is a reflexive $R$-module $E$ such that $A=\End_R(E)$.
\item
The global dimension of $A$ is finite.
\item
$A$ is a Cohen-Macaulay $R$-module. 
\end{enumerate}    
\end{defn}

\noindent
The next assertion is essentially shown in \cite{nonc}.

\begin{prop}
Let $Y=\Spec R$ be an affine normal Gorenstein variety and
assume that there is a crepant resolution 
$f\colon X\to Y$: that is, $f$ is a birational projective morphism 
from a smooth variety $X$ and $f^*\omega_Y=\omega_X$.
If we have a tilting generator $\scE$ of $D^-(X)$ such that 
$$
\Hom_X^i(\scE,\scO_X)=\Hom_X^i(\scO_X,\scE)=0
$$ 
for $i\ne0$,
then $R$ has a non-commutative crepant resolution.
\end{prop}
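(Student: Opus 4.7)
The plan is to verify the three conditions defining a non-commutative crepant resolution for $A=\End_X(\scE)$, viewed as the endomorphism ring of the reflexive $R$-module $E:=f_*\scE$. Since $\scE$ is tilting, $\Ext^i_X(\scE,\scE)=0$ for $i\ne 0$; as $Y$ is affine this gives $\bR f_*\scEnd\scE=f_*\scEnd\scE=A$. The hypothesis $\Hom^i_X(\scO_X,\scE)=0$ similarly ensures $\bR f_*\scE=E$ is concentrated in degree zero.

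First I would establish condition (i), the identification $A\cong\End_R(E)$ with $E$ reflexive. Smoothness of $X$ makes $\scE$ and $\scEnd\scE$ locally free, hence reflexive. Because $f$ is a resolution of the normal variety $Y$, it is an isomorphism outside a closed subset $Z\subset Y$ of codimension $\ge 2$; pushforward of a reflexive sheaf along such a morphism is again reflexive by a standard fact, so both $E$ and $A$ are reflexive $R$-modules (and so is $\End_R(E)$). The natural algebra map $A\to\End_R(E)$ is an isomorphism over $Y\setminus Z$, hence an isomorphism globally, since two reflexive $R$-modules agreeing in codimension one must coincide.

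For condition (ii), the equivalence $\Phi\colon D^b(X)\isom D^b(A)$ supplied by Lemma~\ref{lemma:tilting}, together with the smoothness of $X$, implies that every object of $D^b(A)$ is perfect; hence every finitely generated $A$-module has a finite projective resolution, and $\gl A<\infty$.

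The main step is condition (iii), the Cohen--Macaulay property of $A$ as $R$-module. Set $n=\dim X=\dim R$. Applying Grothendieck duality for $f$, together with the canonical isomorphism $(\scEnd\scE)^\vee\cong\scEnd\scE$, the crepant assumption $\omega_X=f^*\omega_Y$, and the projection formula, I would compute
\begin{align*}
\bD_R(A)
\cong \bR f_*\,\bD_X(\scEnd\scE)
\cong \bR f_*\bigl(\scEnd\scE\otimes f^*\omega_Y\bigr)[n]
\cong A\otimes_R\omega_Y[n].
\end{align*}
Since $R$ is Gorenstein of dimension $n$, $D_R=\omega_Y[n]$, so $\bD_R(A)=\RHom_R(A,\omega_Y)[n]$; comparing yields $\Ext^i_R(A,\omega_Y)=0$ for $i>0$, which over the Gorenstein ring $R$ is equivalent to $A$ being Cohen--Macaulay. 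The principal obstacle is the reflexivity identification in step~(i), requiring the codimension-two property of the exceptional locus combined with the reflexivity-preserving behavior of pushforward along proper birational morphisms onto normal varieties.
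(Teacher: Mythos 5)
Your steps (ii) and (iii) follow essentially the paper's route (the Cohen--Macaulayness of $A$ via Grothendieck duality and crepancy is the same computation, phrased with $\omega_Y$ in place of $\scO_Y$), but step (i) contains a genuine gap. The ``standard fact'' you invoke --- that $f_*$ of a reflexive sheaf along a proper birational morphism onto a normal variety is again reflexive --- is false unless the exceptional locus has codimension $\ge 2$ \emph{in $X$}. A crepant resolution typically contracts divisors (e.g.\ the minimal resolution of a Du Val surface singularity), and already for the blow-up $f$ of a point on a smooth surface one has $f_*\scO_X(-E)=\frakm$, which is not reflexive. Reflexivity of a sheaf on $X$ only controls its sections off codimension-two subsets of $X$, whereas $f^{-1}(Z)$ is usually a divisor, so the identification of $f_*\scF$ with $j_*\bigl((f_*\scF)|_{Y\setminus Z}\bigr)$ breaks down.

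Note that you never use the hypothesis $\Hom^i_X(\scE,\scO_X)=0$ for $i\ne 0$; this is exactly what is needed to close the gap. Since $R$ is Gorenstein and $f$ is crepant, $f^!\scO_Y=\scO_X$, so Grothendieck duality gives $\RHom_R(E,R)\cong\RHom_X(\scE,f^!\scO_Y)\cong\RHom_X(\scE,\scO_X)$, which is concentrated in degree $0$ by hypothesis; hence $E$ is (maximal) Cohen--Macaulay, and therefore reflexive once $\dim R\ge 2$ (the case $\dim R\le 1$ being trivial). The same mechanism --- your step (iii), or equivalently $\Hom^i_X(\scA,\scO_X)\cong\Hom^i_X(\scE,\scE)=0$ --- gives reflexivity of $A$. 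With both modules known to be reflexive, your comparison of $A$ with $\End_R(E)$ in codimension one goes through; this is precisely the argument in the paper.
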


\begin{proof}
When $\dim R\le 1$, then $R$ is itself a non-commutative resolution of $R$. 
Thus, we assume that $\dim R\ge 2$ in what follows.
We define as 
\begin{align*}
E&=\bR\Gamma(\scE)(\cong\bR^0\Gamma(\scE)),
\quad \scA=\scRHom_X(\scE,\scE),\\
A&=\RGamma(\scA)(\cong\RHom_X(\scE,\scE)\cong \Hom_X(\scE,\scE)).
\end{align*} 
By $f^*\omega_Y=\omega_X$, we have $f^!\scO_Y=\scO_X$.
Then the Grothendieck duality and our assumptions imply that 
\begin{align*}
\Hom^i_R(E,R)
&\cong\Hom^i_X(\scE,f^!\scO_Y)\\
&\cong\Hom^i_X(\scE,\scO_X)\\
&=0
\end{align*}
for any $i\ne 0$, which implies that $E$ is Cohen-Macaulay.
We can show similarly that $A$ is Cohen-Macaulay,
since 
\begin{align*}
\Hom^i_X(\scA,\scO_X)
&\cong\Hom_X^i(\scO_X,\scA)\\
&\cong\Hom_X^i(\scE,\scE)=0
\end{align*}
for any $i\ne 0$. Note that $\End_R(E)$ and $A$ are reflexive,
since they are Cohen-Macaulay and $\dim R\ge 2$. 
Then the natural homomorphism $A\to \End_R(E)$ is isomorphic 
in codimension one, as well as everywhere else.
Moreover, $D^b(A)$ and $D^b(X)$ are derived equivalent, and therefore
the global dimension of $A$ is finite.  
\end{proof}

\begin{cor}
Let $Y=\Spec R$ be an affine normal Gorenstein variety defined 
over $\mathbb{C}$, and
suppose that there is a crepant resolution $f\colon X\to Y$
with at most two-dimensional fibers.
Further assume that we have a globally generated,
ample line bundle $\scL$ on $X$ which satisfies 
$\bR^2 f_{\ast}\scL^{-1}=0$. 
Then $R$ has a non-commutative crepant resolution.
\end{cor}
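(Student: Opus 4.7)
The plan is to obtain a tilting generator $\scE$ of $D^-(X)$ via Theorem \ref{thm:rel.dim2}, observe that $\scO_X$ appears as a direct summand of $\scE$, and then feed $\scE$ into the preceding proposition. The first step requires checking the hypothesis $\bR f_{\ast}\scO_X = \scO_Y$ that Theorem \ref{thm:rel.dim2} demands. This follows because a crepant (birational projective) resolution of an affine normal Gorenstein variety over $\mathbb{C}$ forces $Y$ to have canonical Gorenstein singularities, which in characteristic zero are rational (Elkik/Kempf), so the higher direct images $\bR^i f_{\ast}\scO_X$ for $i>0$ vanish. The remaining hypotheses on $f$ and $\scL$ are assumed outright, and $R$ is a finitely generated $\mathbb{C}$-algebra by hypothesis, so Theorem \ref{thm:rel.dim2} applies and produces a tilting vector bundle $\scE$ generating $D^-(X)$.

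For the second step, I trace through the construction in \S \ref{inductive} and \S \ref{subsection:gluing}: starting from $\scE_0 = \scO_X$ we set $\scE_k = \scE_{k-1} \oplus \scN_{k-1}$ for $k=1,2$, and since the fibers are at most two-dimensional the process terminates at $\scE = \scE_2 = \scO_X \oplus \scN_0 \oplus \scN_1$. In particular $\scO_X$ is a direct summand of $\scE$. Because $\scE$ is tilting, $\Hom^i_X(\scE,\scE)=0$ for all $i \ne 0$; the groups $\Hom^i_X(\scO_X,\scE)$ and $\Hom^i_X(\scE,\scO_X)$ are direct summands of $\Hom^i_X(\scE,\scE)$ and hence also vanish for $i \ne 0$. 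This is exactly the hypothesis needed by the preceding Proposition.

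The third and final step is to invoke the preceding Proposition, which then yields a non-commutative crepant resolution of $R$ directly (the reflexive module being $E = \bR\Gamma(\scE) = \Gamma(X,\scE)$ and the algebra being $\End_X(\scE)$).

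The only substantive point to pin down is the rationality of $Y$, i.e.\ $\bR f_{\ast}\scO_X = \scO_Y$; this is the implicit input that bridges the notion of ``crepant resolution'' and the setting of Theorem \ref{thm:rel.dim2}, and it is the step that most deserves a word of justification. Everything else is formal: Theorem \ref{thm:rel.dim2} is used as a black box, and one simply reads off from its inductive construction that $\scO_X$ survives as a summand of the final $\scE$, so the vanishing conditions demanded by the preceding Proposition come for free from the tilting property.
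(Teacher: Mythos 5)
Your proposal is correct and follows essentially the same route as the paper: establish $\bR f_{\ast}\scO_X=\scO_Y$ from rationality of the singularities (the paper just says ``by the vanishing theorem''), note that $\scO_X$ is a direct summand of the tilting generator $\scE$ from Theorem \ref{thm:rel.dim2} so that $\Hom_X^i(\scE,\scO_X)=\Hom_X^i(\scO_X,\scE)=0$ for $i\ne 0$, and invoke the preceding proposition. Your extra word on why the resolution is rational is a reasonable elaboration of the paper's one-line justification, not a different argument.
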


\begin{proof}
Note that $\bR f_*\scO_X\cong\scO_Y$ by the vanishing theorem.
Because $\scO_X$ is a direct summand of 
the tilting generator $\scE$ constructed in 
Theorem \ref{thm:rel.dim2} we obtain 
$$
\Hom_X^i(\scE,\scO_X)=\Hom_X^i(\scO_X,\scE)=0
$$ 
for $i\ne0$. We can apply the above proposition.

\end{proof}


\noindent
Yukinobu Toda

Institute for the Physics and 
Mathematics of the Universe (IPMU), 
University of Tokyo, 
Kashiwano-ha 5-1-5, Kashiwa City, Chiba, 277-8582, Japan

{\em e-mail address}\ : \  toda@ms.u-tokyo.ac.jp

\hspace{5mm}

\noindent
Hokuto Uehara

Department of Mathematics
and Information Sciences,
Tokyo Metropolitan University,
1-1 Minamiohsawa,
Hachioji-shi,
Tokyo,
192-0397,
Japan 

{\em e-mail address}\ : \  hokuto@tmu.ac.jp

\end{document}